\definecolor{azure}{RGB}{0, 127, 255}
\definecolor{signalviolet}{RGB}{161, 11, 112}
\definecolor{orange}{RGB}{243, 119, 53}
\newcommand{\transp}{^{\mathsf T}}
\newcommand{\herm}{^{\ast}}
\newcommand{\inv}{^{-1}}
\newcommand{\Id}{\mathrm{Id}}
\newcommand{\Xmz}{\bm X_{\mathrm{\tiny{MZ}}}}
\newcommand{\Wmz}{\bm W_{\mathrm{\tiny{MZ}}}}
\newcommand{\Lmz}{\bm L_{\mathrm{\tiny{MZ}}}}
\newcommand{\Imz}{I_{\mathrm{\tiny{MZ}}}}
\numberwithin{equation}{section}
\numberwithin{table}{section}
\numberwithin{figure}{section}
\DeclareMathOperator*{\spn}{span}
\DeclareMathOperator*{\diag}{diag}
\DeclareMathOperator*{\argmin}{arg\,min}
\newtheorem{theorem}{Theorem}[section]
\newtheorem{lemma}[theorem]{Lemma}
\newtheorem{remark}[theorem]{Remark}
\newtheorem{corollary}[theorem]{Corollary}
\newtheorem{proposition}[theorem]{Proposition}
\newtheorem{assumption}[theorem]{Assumption}
\begin{document} 

\title[function reconstruction from subsampled quadrature points]{On the reconstruction of functions from values at subsampled quadrature points}

\author[F.~Bartel]{Felix~Bartel}
\address{Faculty of Mathematics, Chemnitz University of Technology, D-09107 Chemnitz}
\email{felix.bartel@mathematik.tu-chemnitz.de}

\author[L.~K\"ammerer]{Lutz~K\"ammerer}
\address{Faculty of Mathematics, Chemnitz University of Technology, D-09107 Chemnitz}
\email{lutz.kaemmerer@mathematik.tu-chemnitz.de}

\author[D.~Potts]{Daniel~Potts}
\address{Faculty of Mathematics, Chemnitz University of Technology, D-09107 Chemnitz}
\email{daniel.potts@mathematik.tu-chemnitz.de}

\author[T.~Ullrich]{Tino~Ullrich}
\address{Faculty of Mathematics, Chemnitz University of Technology, D-09107 Chemnitz}
\email{tino.ullrich@mathematik.tu-chemnitz.de}

\subjclass[2020]{
    41A10,  41A25,  41A60,  41A63,  42A10,  68Q25,  68W40,  94A20   }

\keywords{
    function approximation,
    sampling,
    Marcinkiewicz-Zygmund inequality,
    rank-1 lattice,
    subsampling,zz
    least squares,
    FFT
}

\date{}

\dedicatory{}

\begin{abstract}
This paper is concerned with function reconstruction from samples. The sampling points used in several approaches are (1) structured points connected with fast algorithms or
(2) unstructured points coming from, e.g., an initial random draw to achieve an improved information complexity.
We connect both approaches and propose a subsampling of structured points in an offline step.
In particular, we start with structured quadrature points (QMC), which provide stable $L_2$ reconstruction properties. The subsampling procedure consists of a computationally inexpensive random step followed by a deterministic procedure to further reduce the number of points while keeping its information.
In these points functions (belonging to a RKHS of bounded functions) will be sampled and reconstructed from whilst achieving state of the art error decay.

Our method is dimension-independent and is applicable as soon as we know some initial quadrature points.
We apply our general findings on the $d$-dimensional torus to subsample rank-1 lattices, where it is known that full rank-1 lattices lose half the optimal order of convergence (expressed in terms of the size of the lattice). In contrast to that, our subsampled version regains the optimal rate since many of the lattice points are not needed. Moreover, we utilize fast and memory efficient Fourier algorithms in order to compute the approximation. Numerical experiments in several dimensions support our findings.
\end{abstract}

\maketitle

\section{Introduction}
The goal of this paper is the reconstruction of multivariate functions $f\colon D\to\mathds C$, $D\subset \mathds R^d$, from sampled function values. There seems to be an increasing interest in this topic with several new and surprising approaches ranging from highly non-constructive to implementable, cf.\ \cite{KaPoVo17, ByKaUlVo16, Gr20, KUV19, KrUl19, KrUl20, Temlyakov20, TeUl22a, MoUl20, KKKS21, TeUl22, LiTe20, NaSchUl20, DKU22, BSU22, JaUlVo22}. Since it is known  \cite{ByKaUlVo16} that the highly optimized and fast algorithms \cite{CoNu07, KaPoVo17, KuoMiNoNu19, Kaemmerer20, KKKS21} are often not optimal with respect to information complexity, there is a trend to make the recent ``complexity optimal'' approaches \cite{Temlyakov20, NaSchUl20, DKU22, LiTe20} accessible for practitioners, cf.\ \cite{HNP20, BSU22}.
We continue working in this direction and provide new optimal theoretical results by exploiting a constructive subsampling strategy \cite{BSU22} on the one hand. On the other hand we focus on implementability and fast/memory-efficient algorithms. The latter is tightly connected to the use of structured sampling point sets since they often allow for fast matrix vector multiplication (FFT).
In \cite{JoKuSl13} there are several such constructions available mostly from numerical integration, e.g.\ lattice constructions, which allow for fast computation.
In addition, if a point set $\Xmz = \{\bm x^1, \dots, \bm x^M\}\subset D$ and weights $\Wmz = \diag(\omega_1, \dots, \omega_M)$ exactly integrate functions from a space of the form $V\cdot \bar{V}$, i.e., for $0<A<\infty$
\begin{align}\label{f002}
    \sum_{i=1}^M \omega_i g(\bm x^i) \overline{h(\bm x^i)}
    = A \int_D g(\bm x) \overline{h(\bm x)} \,\mathrm d\nu(\bm x)
    \quad\text{for all}\quad
    g, h\in V\,,
\end{align}
then, equivalently, they establish an $L_2$-Marcinkiewicz-Zygmund inequality with constants $A=B$ for all $f \in V$, cf.\ Theorem~\ref{lemma:mzquadrature}.
The concept of these inequalities goes back to J.~Marcinkiewicz and A.~Zygmund \cite{MZ37} and establishes a connection between the continuous $L_2$-norm and point evaluations of functions.
A set of points $\Xmz$ and weights $\Wmz$ fulfills an $L_2$-Marcinkiewicz-Zygmund (MZ) inequality with constants $0<A\le B<\infty$ for a finite-dimensional function space $V$, if
\begin{align}\label{f000}
    A\|f\|_{L_2}^2
    \le \sum_{i=1}^{M} \omega_i |f(\bm x^i)|^2
    \le B\|f\|_{L_2}^2
    \quad\text{for all}\quad
    f\in V\,.
\end{align}
Clearly, such points together with appropriate subspaces $V$ are good for sampling recovery in $L_2:=L_2(D,\nu)$, cf.\ Theorem~\ref{thm:quaderror}. For a systematic study of MZ inequalities (also for $p\neq 2$) we refer to the recent series of papers by V.N. Temlyakov and coauthors, see for instance \cite{Te18, KaKoLiTe22}. 

Note that $M$ in \eqref{f000} might be much larger than $\dim(V)$ which requires an additional ``subsampling step'' in order to create an optimal least squares approximation method out of an exact quadrature rule. By bridging to the frame terminology we are able to apply recent techniques from \cite{BSU22} in order to reduce the number of points whilst keeping their approximation power. We apply two different techniques, which we use in combination:
\begin{itemize}
\item
    In a first step we use random subsampling in order to obtain an index set $J\subset\{1,\dots,M\}$ such that $\bm X = \{\bm x^i\}_{i\in J}$ and new weights $\bm W$ fulfill an $L_2$-MZ inequality with logarithmic oversampling, i.e., for a fixed constant $C$ we have $|\bm X| \le C \dim(V)\log(\dim(V))$.
\item
    In a second step we use the deterministic \texttt{BSS}-Algorithm from \cite{BaSpSr09, BSU22} to further reduce the number of points to merely linear oversampling, i.e., for $b$ close to one we compute $J' \subset J$ such that $\bm X' = \{\bm x^i\}_{i\in J'}$ and weights $\bm W'$ with $|\bm X'| \le \lceil b \dim(V)\rceil$ fulfill an $L_2$-MZ inequality as well.
\end{itemize}

\begin{figure} \centering
\begin{minipage}{0.245\linewidth}
    \centering
    \begingroup
  \makeatletter
  \providecommand\color[2][]{\GenericError{(gnuplot) \space\space\space\@spaces}{Package color not loaded in conjunction with
      terminal option `colourtext'}{See the gnuplot documentation for explanation.}{Either use 'blacktext' in gnuplot or load the package
      color.sty in LaTeX.}\renewcommand\color[2][]{}}\providecommand\includegraphics[2][]{\GenericError{(gnuplot) \space\space\space\@spaces}{Package graphicx or graphics not loaded}{See the gnuplot documentation for explanation.}{The gnuplot epslatex terminal needs graphicx.sty or graphics.sty.}\renewcommand\includegraphics[2][]{}}\providecommand\rotatebox[2]{#2}\@ifundefined{ifGPcolor}{\newif\ifGPcolor
    \GPcolortrue
  }{}\@ifundefined{ifGPblacktext}{\newif\ifGPblacktext
    \GPblacktexttrue
  }{}\let\gplgaddtomacro\g@addto@macro
\gdef\gplbacktext{}\gdef\gplfronttext{}\makeatother
  \ifGPblacktext
\def\colorrgb#1{}\def\colorgray#1{}\else
\ifGPcolor
      \def\colorrgb#1{\color[rgb]{#1}}\def\colorgray#1{\color[gray]{#1}}\expandafter\def\csname LTw\endcsname{\color{white}}\expandafter\def\csname LTb\endcsname{\color{black}}\expandafter\def\csname LTa\endcsname{\color{black}}\expandafter\def\csname LT0\endcsname{\color[rgb]{1,0,0}}\expandafter\def\csname LT1\endcsname{\color[rgb]{0,1,0}}\expandafter\def\csname LT2\endcsname{\color[rgb]{0,0,1}}\expandafter\def\csname LT3\endcsname{\color[rgb]{1,0,1}}\expandafter\def\csname LT4\endcsname{\color[rgb]{0,1,1}}\expandafter\def\csname LT5\endcsname{\color[rgb]{1,1,0}}\expandafter\def\csname LT6\endcsname{\color[rgb]{0,0,0}}\expandafter\def\csname LT7\endcsname{\color[rgb]{1,0.3,0}}\expandafter\def\csname LT8\endcsname{\color[rgb]{0.5,0.5,0.5}}\else
\def\colorrgb#1{\color{black}}\def\colorgray#1{\color[gray]{#1}}\expandafter\def\csname LTw\endcsname{\color{white}}\expandafter\def\csname LTb\endcsname{\color{black}}\expandafter\def\csname LTa\endcsname{\color{black}}\expandafter\def\csname LT0\endcsname{\color{black}}\expandafter\def\csname LT1\endcsname{\color{black}}\expandafter\def\csname LT2\endcsname{\color{black}}\expandafter\def\csname LT3\endcsname{\color{black}}\expandafter\def\csname LT4\endcsname{\color{black}}\expandafter\def\csname LT5\endcsname{\color{black}}\expandafter\def\csname LT6\endcsname{\color{black}}\expandafter\def\csname LT7\endcsname{\color{black}}\expandafter\def\csname LT8\endcsname{\color{black}}\fi
  \fi
    \setlength{\unitlength}{0.0500bp}\ifx\gptboxheight\undefined \newlength{\gptboxheight}\newlength{\gptboxwidth}\newsavebox{\gptboxtext}\fi \setlength{\fboxrule}{0.5pt}\setlength{\fboxsep}{1pt}\definecolor{tbcol}{rgb}{1,1,1}\begin{picture}(1700.00,1700.00)\gplgaddtomacro\gplbacktext{}\gplgaddtomacro\gplfronttext{}\gplbacktext
    \put(0,0){\includegraphics[width={85.00bp},height={85.00bp}]{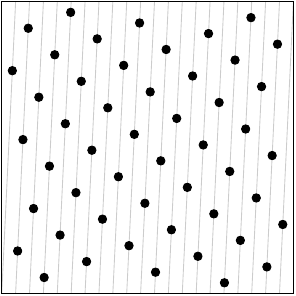}}\gplfronttext
  \end{picture}\endgroup
 \\
    $\Xmz \!=\! \{\bm x^1\!, \dots, \bm x^M\!\}$\\
    $\Wmz \in [0,\infty)^{M\times M}$
\end{minipage}
\begin{minipage}{0.11\linewidth}
    \centering
    \raisebox{1.5cm}{$\overset{\text{random}}{\underset{\text{subsampling}}{\rightarrow}}$}
\end{minipage}
\begin{minipage}{0.245\linewidth}
    \centering
    \begingroup
  \makeatletter
  \providecommand\color[2][]{\GenericError{(gnuplot) \space\space\space\@spaces}{Package color not loaded in conjunction with
      terminal option `colourtext'}{See the gnuplot documentation for explanation.}{Either use 'blacktext' in gnuplot or load the package
      color.sty in LaTeX.}\renewcommand\color[2][]{}}\providecommand\includegraphics[2][]{\GenericError{(gnuplot) \space\space\space\@spaces}{Package graphicx or graphics not loaded}{See the gnuplot documentation for explanation.}{The gnuplot epslatex terminal needs graphicx.sty or graphics.sty.}\renewcommand\includegraphics[2][]{}}\providecommand\rotatebox[2]{#2}\@ifundefined{ifGPcolor}{\newif\ifGPcolor
    \GPcolortrue
  }{}\@ifundefined{ifGPblacktext}{\newif\ifGPblacktext
    \GPblacktexttrue
  }{}\let\gplgaddtomacro\g@addto@macro
\gdef\gplbacktext{}\gdef\gplfronttext{}\makeatother
  \ifGPblacktext
\def\colorrgb#1{}\def\colorgray#1{}\else
\ifGPcolor
      \def\colorrgb#1{\color[rgb]{#1}}\def\colorgray#1{\color[gray]{#1}}\expandafter\def\csname LTw\endcsname{\color{white}}\expandafter\def\csname LTb\endcsname{\color{black}}\expandafter\def\csname LTa\endcsname{\color{black}}\expandafter\def\csname LT0\endcsname{\color[rgb]{1,0,0}}\expandafter\def\csname LT1\endcsname{\color[rgb]{0,1,0}}\expandafter\def\csname LT2\endcsname{\color[rgb]{0,0,1}}\expandafter\def\csname LT3\endcsname{\color[rgb]{1,0,1}}\expandafter\def\csname LT4\endcsname{\color[rgb]{0,1,1}}\expandafter\def\csname LT5\endcsname{\color[rgb]{1,1,0}}\expandafter\def\csname LT6\endcsname{\color[rgb]{0,0,0}}\expandafter\def\csname LT7\endcsname{\color[rgb]{1,0.3,0}}\expandafter\def\csname LT8\endcsname{\color[rgb]{0.5,0.5,0.5}}\else
\def\colorrgb#1{\color{black}}\def\colorgray#1{\color[gray]{#1}}\expandafter\def\csname LTw\endcsname{\color{white}}\expandafter\def\csname LTb\endcsname{\color{black}}\expandafter\def\csname LTa\endcsname{\color{black}}\expandafter\def\csname LT0\endcsname{\color{black}}\expandafter\def\csname LT1\endcsname{\color{black}}\expandafter\def\csname LT2\endcsname{\color{black}}\expandafter\def\csname LT3\endcsname{\color{black}}\expandafter\def\csname LT4\endcsname{\color{black}}\expandafter\def\csname LT5\endcsname{\color{black}}\expandafter\def\csname LT6\endcsname{\color{black}}\expandafter\def\csname LT7\endcsname{\color{black}}\expandafter\def\csname LT8\endcsname{\color{black}}\fi
  \fi
    \setlength{\unitlength}{0.0500bp}\ifx\gptboxheight\undefined \newlength{\gptboxheight}\newlength{\gptboxwidth}\newsavebox{\gptboxtext}\fi \setlength{\fboxrule}{0.5pt}\setlength{\fboxsep}{1pt}\definecolor{tbcol}{rgb}{1,1,1}\begin{picture}(1700.00,1700.00)\gplgaddtomacro\gplbacktext{}\gplgaddtomacro\gplfronttext{}\gplbacktext
    \put(0,0){\includegraphics[width={85.00bp},height={85.00bp}]{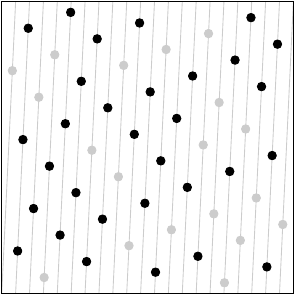}}\gplfronttext
  \end{picture}\endgroup
     $\bm X = \{\bm x^i\}_{i\in J}$\\
    $\bm W\in [0,\infty)^{|\bm X|\times |\bm X|}$
\end{minipage}
\begin{minipage}{0.11\linewidth}
    \centering
    \raisebox{1.5cm}{$\overset{\text{BSS}}{\underset{\text{subsampling}}{\rightarrow}}$}
\end{minipage}
\begin{minipage}{0.245\linewidth}
    \centering
    \begingroup
  \makeatletter
  \providecommand\color[2][]{\GenericError{(gnuplot) \space\space\space\@spaces}{Package color not loaded in conjunction with
      terminal option `colourtext'}{See the gnuplot documentation for explanation.}{Either use 'blacktext' in gnuplot or load the package
      color.sty in LaTeX.}\renewcommand\color[2][]{}}\providecommand\includegraphics[2][]{\GenericError{(gnuplot) \space\space\space\@spaces}{Package graphicx or graphics not loaded}{See the gnuplot documentation for explanation.}{The gnuplot epslatex terminal needs graphicx.sty or graphics.sty.}\renewcommand\includegraphics[2][]{}}\providecommand\rotatebox[2]{#2}\@ifundefined{ifGPcolor}{\newif\ifGPcolor
    \GPcolortrue
  }{}\@ifundefined{ifGPblacktext}{\newif\ifGPblacktext
    \GPblacktexttrue
  }{}\let\gplgaddtomacro\g@addto@macro
\gdef\gplbacktext{}\gdef\gplfronttext{}\makeatother
  \ifGPblacktext
\def\colorrgb#1{}\def\colorgray#1{}\else
\ifGPcolor
      \def\colorrgb#1{\color[rgb]{#1}}\def\colorgray#1{\color[gray]{#1}}\expandafter\def\csname LTw\endcsname{\color{white}}\expandafter\def\csname LTb\endcsname{\color{black}}\expandafter\def\csname LTa\endcsname{\color{black}}\expandafter\def\csname LT0\endcsname{\color[rgb]{1,0,0}}\expandafter\def\csname LT1\endcsname{\color[rgb]{0,1,0}}\expandafter\def\csname LT2\endcsname{\color[rgb]{0,0,1}}\expandafter\def\csname LT3\endcsname{\color[rgb]{1,0,1}}\expandafter\def\csname LT4\endcsname{\color[rgb]{0,1,1}}\expandafter\def\csname LT5\endcsname{\color[rgb]{1,1,0}}\expandafter\def\csname LT6\endcsname{\color[rgb]{0,0,0}}\expandafter\def\csname LT7\endcsname{\color[rgb]{1,0.3,0}}\expandafter\def\csname LT8\endcsname{\color[rgb]{0.5,0.5,0.5}}\else
\def\colorrgb#1{\color{black}}\def\colorgray#1{\color[gray]{#1}}\expandafter\def\csname LTw\endcsname{\color{white}}\expandafter\def\csname LTb\endcsname{\color{black}}\expandafter\def\csname LTa\endcsname{\color{black}}\expandafter\def\csname LT0\endcsname{\color{black}}\expandafter\def\csname LT1\endcsname{\color{black}}\expandafter\def\csname LT2\endcsname{\color{black}}\expandafter\def\csname LT3\endcsname{\color{black}}\expandafter\def\csname LT4\endcsname{\color{black}}\expandafter\def\csname LT5\endcsname{\color{black}}\expandafter\def\csname LT6\endcsname{\color{black}}\expandafter\def\csname LT7\endcsname{\color{black}}\expandafter\def\csname LT8\endcsname{\color{black}}\fi
  \fi
    \setlength{\unitlength}{0.0500bp}\ifx\gptboxheight\undefined \newlength{\gptboxheight}\newlength{\gptboxwidth}\newsavebox{\gptboxtext}\fi \setlength{\fboxrule}{0.5pt}\setlength{\fboxsep}{1pt}\definecolor{tbcol}{rgb}{1,1,1}\begin{picture}(1700.00,1700.00)\gplgaddtomacro\gplbacktext{}\gplgaddtomacro\gplfronttext{}\gplbacktext
    \put(0,0){\includegraphics[width={85.00bp},height={85.00bp}]{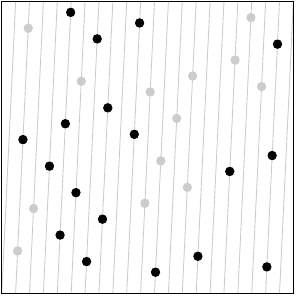}}\gplfronttext
  \end{picture}\endgroup
     $\bm X' = \{\bm x^i\}_{i\in J'}$\\
    $\bm W'\!\in\! [0,\!\infty)^{|\bm X'|\times |\bm X'|}$
\end{minipage}
\caption{Two-step subsampling procedure from a rank-1 lattice}\label{fig:scheme}
\end{figure} 

This scheme is depicted in Figure~\ref{fig:scheme} for initial points from a rank-1 lattice, see Section~\ref{sec:t} for details.
The corresponding results are found in Theorems~\ref{thm:randomsubsample_finite}, \ref{thm:bsssubsample_finite_weighted}, and \ref{thm:bsssubsample_finite}.
There is freedom in the way of subsampling which allows to control the approximation error for functions from infinite-dimensional function spaces as well.
We assume to be given a class of functions $H(K)$ (reproducing kernel Hilbert space with kernel $K$ on $D$) and an initial set of $L_2$-MZ points $\Xmz$ for a subspace $V = \spn\{\eta_k\}_{k\in\Imz} \subset H(K)$ spanned by $L_2$-orthonormal eigenfunctions $\eta_k(\cdot)$ of the operator $W = \Id^\ast\circ\Id$ with $\Id\colon H(K)\hookrightarrow L_2$ the embedding.
We then consider the least squares reconstruction $S_{I}^{\bm X}$, defined in \eqref{eq:lsqr}, on the possibly smaller subspace $V = \spn\{\eta_k\}_{k\in I}$, $I\subset\Imz$, where the choice of $I$ is a degree of freedom, and are interested in the $L_2$-worst-case error
\begin{equation}\label{f001}
    \sup_{\|f\|_{H(K)}\le 1} \|f-S_{I}^{\bm X}f\|_{L_2}^2\,.
\end{equation}
The least squares approximation $S_{\bm X}^{I} f$ utilizes only points evaluations and is a discrete version of the $L_2$-projection onto $V$
\begin{align}\label{eq:projection}
    P_{I} f
    = \argmin_{g\in V} \|f-g\|_{L_2}^2
    = \sum_{k\in I} \langle f, \eta_k \rangle_{L_2} \eta_k \,,
\end{align}
where the last equality holds since the $\eta_k$ are $L_2$-orthonormal.
The projection is optimal and will serve as a benchmark.

In Theorem \ref{thm:general} we show bounds for the worst-case error of the least squares approximation using a random subsample $\bm X = \{\bm x^i\}_{i\in J} \subset \Xmz$.
For this subsampling we use a special discrete probability distribution given in \eqref{eq:density} below. The bounds are stated in terms of the eigenvalues $\lambda_k$  and the approximation power of the initial method $S^{\Xmz}_{\Imz}$ expressed by a quantity like \eqref{f001}. Moreover, using an additional deterministic subsampling which yields $\bm X'\subset \bm X$ with merely linear oversampling, i.e., $|X'| \le b|I|$ for $b$ close to one, a special case of the main result in Theorem~\ref{thm:generalbss} for finite measures $\nu$ and kernels $K$ which are constant on the diagonal reads as follows.

\begin{theorem} Let $H(K)$ be a separable reproducing kernel Hilbert space with a finite trace \eqref{eq:finitetrace} and bounded \eqref{eq:boundedkernel} kernel.
    Further, let $\lambda_k$ and $\eta_k$ denote the eigenvalues and $L_2$-orthonormal eigenfunctions of the operator $W = \Id^\ast\circ\Id$ with $\Id\colon H(K)\hookrightarrow L_2$ the embedding.
    For $V = \spn\{\eta_k\}_{k\in\Imz} \subset H(K)$ a finite-dimensional function space, we assume the points $\Xmz\subset D$ and weights $\Wmz\in [0,\infty)^{M\times M}$ fulfill an $L_2$-MZ inequality for $V$ with constants $A$ and $B$.
    Finally, let $I \subset  \Imz \subset \mathbb N$, $|I|\ge 3$, be an index set and $\log(73B/A)+2\le r$ a constant.
    
    For a target oversampling factor $b > 1+\frac{1}{|I|}$ we construct a subset of points $\bm X' = \{\bm x^i\}_{i\in J'} \subset \Xmz$ with $|\bm X'| \le \lceil b|I|\rceil$ such that we have the following bound with probability $1-4/|I|$
    \begin{align*}
        \sup_{\|f\|_{H(K)}\le 1} \|f-S_{I}^{\bm X'}f\|_{L_2}^2
        &\le
        79\,616\,\frac{B}{A} \frac{(b+1)^2}{(b-1)^3} \log|I|
        \Bigg(
        \frac{B}{A} \sup_{ k\notin I}\lambda_k
        +
        \frac{r}{|I|}\sum_{k\in\Imz\setminus I} \lambda_k
        \\
        &\quad + \frac{1}{A} \sup_{\|f\|_{H(K)}\le 1} \|f-P_{\Imz}f\|_{\ell_\infty(D)}^2
        \Bigg)
    \end{align*}
    with weights $\bm W' = \diag(\omega_i/\varrho_i)_{i\in J'}$ and the least squares approximation $S_{I}^{\bm X'}f = S_{I}^{\bm X'}(\bm W')f$ defined in \eqref{eq:lsqr}.
\end{theorem}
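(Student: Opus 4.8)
\section*{Proof proposal}

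The plan is to obtain the statement as the specialization of the general two-step construction behind Theorem~\ref{thm:generalbss}, so I would run the random-plus-\texttt{BSS} pipeline and then check that the finite-measure and diagonal-constant-kernel hypotheses collapse the general bound into the displayed form. I first fix $f$ with $\|f\|_{H(K)}\le 1$ and split the error by $L_2$-orthogonality of the least squares space $\spn\{\eta_k\}_{k\in I}$ and its orthogonal complement,
\[
    \|f-S_I^{\bm X'}f\|_{L_2}^2 = \|f-P_I f\|_{L_2}^2 + \|S_I^{\bm X'}(f-P_I f)\|_{L_2}^2 .
\]
The first summand equals $\sup_{k\notin I}\lambda_k$ after maximizing over the unit ball and is absorbed into the first eigenvalue term on the right-hand side. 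For the second summand I would use that, once $\bm X'$ carries an $L_2$-MZ inequality for $\spn\{\eta_k\}_{k\in I}$ with lower constant $A'$, the least squares map $S_I^{\bm X'}$ reproduces that space and is the $\bm W'$-weighted discrete orthogonal projection onto it, whence $\|S_I^{\bm X'}(f-P_I f)\|_{L_2}^2\le A'^{-1}\sum_{i\in J'}\omega_i'\,|f(\bm x^i)-P_I f(\bm x^i)|^2$. Everything then reduces to estimating this discrete residual.

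Next I would build $\bm X'$ in the two advertised steps. A random draw from $\Xmz$ according to the density~\eqref{eq:density} produces, by Theorem~\ref{thm:general} and with probability $1-4/|I|$, a reweighted set $\bm X$ obeying an $L_2$-MZ inequality for $\spn\{\eta_k\}_{k\in I}$ at logarithmic oversampling; this is exactly the regime in which the deterministic \texttt{BSS}-algorithm of~\cite{BaSpSr09,BSU22} applies. Feeding $\bm X$ to \texttt{BSS} yields $\bm X'\subset\bm X$ with $|\bm X'|\le\lceil b|I|\rceil$ while preserving the MZ inequality, the spectral guarantee degrading the constants by the factor $(b+1)^2/(b-1)^3$; this is the source of that rational factor, just as the concentration in the random step is the source of $\log|I|$.

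To finish I would estimate the discrete residual by decomposing $f-P_I f=(P_{\Imz}f-P_I f)+(f-P_{\Imz}f)$. The middle piece $P_{\Imz}f-P_I f$ lies in $\spn\{\eta_k\}_{k\in\Imz\setminus I}$, and I would control its discrete mass by the matrix-concentration estimate underlying the random step: its spectral-norm part yields the worst-single-mode contribution $\tfrac{B}{A}\sup_{k\notin I}\lambda_k$, while its variance (trace) part yields the aggregate contribution $\tfrac{r}{|I|}\sum_{k\in\Imz\setminus I}\lambda_k$, with $r$ the oversampling parameter entering~\eqref{eq:density}. The tail piece $f-P_{\Imz}f$ is bounded pointwise by $\|f-P_{\Imz}f\|_{\ell_\infty(D)}$, so that $\sum_{i\in J'}\omega_i'\,|f(\bm x^i)-P_{\Imz}f(\bm x^i)|^2\le \big(\sum_{i\in J'}\omega_i'\big)\,\|f-P_{\Imz}f\|_{\ell_\infty(D)}^2$; here the finiteness of $\nu$ together with the constancy of $K(\bm x,\bm x)$ on the diagonal pin the total reweighted mass $\sum_{i\in J'}\omega_i'$ to a controlled multiple, so that $A'^{-1}\sum_{i\in J'}\omega_i'$ becomes a constant multiple of $1/A$ and produces the stated $\tfrac1A\sup_{\|f\|_{H(K)}\le1}\|f-P_{\Imz}f\|_{\ell_\infty(D)}^2$ term. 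Assembling the three contributions, multiplying by $A'^{-1}$ and the \texttt{BSS} factor, and maximizing over the unit ball gives the claim.

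The main obstacle I anticipate is not any single estimate but the coupling of the two steps: I must feed the random step exactly the quantities that the \texttt{BSS} step consumes, and I must verify that the diagonal-constant-kernel hypothesis lets me replace the general kernel-dependent quantities of Theorem~\ref{thm:generalbss} (which involve $\sup_{\bm x}K(\bm x,\bm x)$ and the pointwise energies $\sum_{k}\lambda_k|\eta_k(\bm x)|^2$) by their trace-normalized constant values, so that all kernel dependence collapses into the clean $B/A$ prefactors. Carrying the explicit numerical constant (here $79\,616$) correctly through both the concentration argument and the spectral \texttt{BSS} bound is then the bookkeeping burden rather than a conceptual difficulty.
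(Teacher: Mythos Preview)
Your plan is correct and is the paper's own approach: the introduction theorem is exactly Theorem~\ref{thm:generalbss} with $t=\log|I|$ (so that $1-4\exp(-t)=1-4/|I|$ and $\log|I|+t=2\log|I|$, which doubles $39\,808$ to $79\,616$), and the condition $\log(73B/A)+2\le r$ is precisely what forces $n\le |I|^r$ for $|I|\ge 3$. Your Pythagorean split, the decomposition $f-P_If=(P_{\Imz}f-P_If)+(f-P_{\Imz}f)$, the matrix-concentration control of the first piece, and the $\ell_\infty$ bound on the second are Steps~1--3 of the proof of Theorem~\ref{thm:general}, reused verbatim in Theorem~\ref{thm:generalbss} with the additional \texttt{BSS} factor.

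One over-interpretation to correct: the bound in Theorem~\ref{thm:generalbss} already appears in its final form; there are no kernel-dependent quantities such as $\sup_{\bm x}K(\bm x,\bm x)$ or pointwise energies left to simplify, so the constant-diagonal hypothesis does no work in the deduction. In particular the $\ell_\infty$ term is not handled by bounding the total reweighted mass $\sum_{i\in J'}\omega_i'$ via kernel constancy, but via the third summand $\omega_i/3$ of the density~\eqref{eq:density}, which gives $\omega_i/\varrho_i\le 3$ pointwise (Step~3 in the proof of Theorem~\ref{thm:general}); this only needs $\sum_i\omega_i=1$, i.e., a finite-measure normalization. The constant-diagonal remark in the introduction is context for the intended applications (cf.\ Remark~\ref{rem:aftercentral}), not a technical ingredient here.
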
 

To interpret the above result, we assume the index set $I$ contains the indices corresponding to the largest eigenvalues and $b$ fixed. Up to a logarithmic and constant factor we obtain two categories of summands
\begin{itemize}
    \item
        The summand $\sup_{k\notin I} \lambda_k$ is the truncation error, i.e., the best possible error from the function space $\spn\{\eta_k\}_{k\in I}$.
        The summand $\frac{1}{|I|}\sum_{k\notin I} \lambda_k$ is of the same order whenever the eigenvalues have polynomial decay smaller in order than $-1$, i.e., $\lambda_k \lesssim k^{-1-\varepsilon}$.
        These only depend on the index set $I$ for the least squares approximation.
    \item
        The remaining summand is the error from the initial $L_2$-MZ inequality of the function space $V = \spn\{\eta_k\}_{k\in\Imz}$.
        Note, that the $\ell_\infty$-estimate occurs in the theory but in practice we observe a better error as we will discuss in \ref{subsec:t_numerics}.
\end{itemize}

Despite the fact that we restrict ourselves to structured nodes we achieve the same rates as in  \cite{NaSchUl20, BSU22} which are worse by the  square root of a logarithm compared to \cite{DKU22}. When going to specific scenarios (see Corollary \ref{cor:r1} below), these structured nodes  come from rank one lattices which can be well-processed in a computer (they are at least rational). The lower bound in \cite{HiKrNoVy22} indicates that the bound $1/|I|\sum_{k\notin I}\lambda_k$ in \cite{DKU22} is most likely sharp and we are close to optimal. 
In fact, the term only depending on $\Imz$ can be made arbitrarily small by choosing a good initial $L_2$-MZ inequality for large enough $\Imz$ without deteriorating the remaining bound or the size of the subsampled points $\bm X'$. Note that often $\Imz = I$ is a valid choice. 
We stress again that, by choosing a ``good enough'' initial $L_2$-MZ inequality, one achieves the best possible error rate up to a logarithm.
Recent progress has shown that the logarithmic factor might be avoided but this utilizes the Kadison-Singer theorem and is not constructive in contrast to our approach, cf.\ \cite{DKU22}.

The novelty of our result is that we start with discrete $L_2$-MZ points and positive weights which often have a structure that can be exploited towards fast matrix-vector multiplications.
These are available for, e.g.\ the $d$-dimensional torus $\mathds T^d$ with the trigonometric polynomials with Fourier-like algorithms, the cube with Chebyshev polynomials \cite{P03}, the two-dimensional sphere $\mathds S^2$ with the spherical harmonics \cite{KP03}, or the rotation group $SO(3)$ with the Wigner-D functions \cite{PPV09}.
We make a case study on the $d$-dimensional torus $\mathds T^d$ with rank-1 lattices, where we apply our theory and conduct numerical experiments.
It is known that full rank-1 lattices achieve only half the optimal main rate for Sobolev spaces with dominating mixed smoothness, cf.\ \cite{ByKaUlVo16} and Theorem~\ref{thm:r1}.
With the subsampling approach in this paper, we regain the optimal order of convergence via subsampling:

\begin{corollary}\label{cor:r1} Let $H^{s}_{\textnormal{mix}}(\mathbb{T}^d)$, $s>1/2$, be the Sobolev space with dominating mixed smoothness \eqref{eq:hsmix} on the $d$-torus, $I \subset  \Imz \subset \mathbb Z^d$, $|I| \ge 3$, be the hyperbolic cross frequency index sets defined in \eqref{eq:hc}, and $\Xmz$ a reconstructing rank-1 lattice for $\Imz$ with $M$ points, cf.\ \eqref{eq:reconstructingproperty}.
    For $b > 1+\frac{1}{|I|}$ we construct a subset of points $\bm X' = \{\bm x^i\}_{i\in J'} \subset \Xmz$ with $|\bm X'| \le \lceil b|I|\rceil$ such that we have the following bound for the plain least squares approximation
    \begin{align}\label{Cor1.2}
        \sup_{\|f\|_{H^{s}_{\textnormal{mix}}}\le 1} \|f-S_{I}^{\bm X'}f\|_{L_2}^2
        &\le
        C_{d,s}
        \Big(\frac{b}{b-1}\Big)^3 \log|I|
        \Big(
        |I|^{-2s}(\log |I|)^{(d-1)2s} \\
        \nonumber&\quad + |\Imz|^{-2s+1}(\log|\Imz|)^{2(d-1)s}
        \Big) \,,
    \end{align}
    with probability $1-4/|I|$ and $C_{d,s}$ a constant depending on $d$ and $s$.
\end{corollary}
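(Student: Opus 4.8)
The plan is to derive the corollary as a direct specialization of the preceding Theorem, so the bulk of the work is (i) verifying its hypotheses in the present setting and (ii) converting the three abstract summands into the two explicit terms of \eqref{Cor1.2} via the eigenvalue asymptotics of the hyperbolic cross. For $H^{s}_{\textnormal{mix}}(\mathbb T^d)$ the operator $W=\Id^\ast\circ\Id$ is diagonalized by the Fourier characters $\eta_{\bm k}(\bm x)=e^{2\pi\mathrm i\,\bm k\cdot\bm x}$, $\bm k\in\mathbb Z^d$, which are $L_2(\mathbb T^d)$-orthonormal, with eigenvalues $\lambda_{\bm k}\asymp\prod_{j=1}^d\max(1,|k_j|)^{-2s}$ (the reciprocal squared Sobolev weights from \eqref{eq:hsmix}); the $\mathbb N$-indexing of the Theorem is recovered by any enumeration and plays no role in the estimates. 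Since $2s>1$, the one-dimensional sum $\sum_{n\in\mathbb Z}\max(1,|n|)^{-2s}$ is finite, so the trace $\sum_{\bm k}\lambda_{\bm k}$ is finite and the translation-invariant kernel is bounded and constant on the diagonal; thus the standing assumptions \eqref{eq:finitetrace}, \eqref{eq:boundedkernel} hold. The reconstructing property \eqref{eq:reconstructingproperty} of $\Xmz$ for $\Imz$ states precisely that $\tfrac1M\sum_{i=1}^M\eta_{\bm k}(\bm x^i)\overline{\eta_{\bm k'}(\bm x^i)}=\delta_{\bm k,\bm k'}$ for $\bm k,\bm k'\in\Imz$, which is exact quadrature on $V\cdot\bar V$ and hence, by Theorem~\ref{lemma:mzquadrature}, an $L_2$-MZ inequality for $V=\spn\{\eta_{\bm k}\}_{\bm k\in\Imz}$ with $A=B=1$ and weights $\omega_i=1/M$. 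Consequently $B/A=1$, $r=\log73+2$ is absolute, and one checks $\tfrac{(b+1)^2}{(b-1)^3}\le4\bigl(\tfrac{b}{b-1}\bigr)^3$ for $b>1$, so the whole prefactor of the Theorem is at most an absolute constant times $\bigl(\tfrac{b}{b-1}\bigr)^3\log|I|$.

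It remains to estimate the three bracketed summands. Writing $I=H(N_1)$ and $\Imz=H(N_2)$ for the level parameters of the hyperbolic crosses in \eqref{eq:hc}, I would invoke two standard counting estimates: first $\sup_{\bm k\notin H(N)}\lambda_{\bm k}\lesssim_{d,s}N^{-2s}$, since every discarded $\bm k$ satisfies $\prod_j\max(1,|k_j|)>N$; second the tail-sum bound $\sum_{\bm k\notin H(N)}\lambda_{\bm k}\lesssim_{d,s}N^{-2s+1}(\log N)^{d-1}$, obtained by summing $n^{-2s}$ against the $\asymp(\log n)^{d-1}$ frequencies at each level $n>N$ and using $2s>1$. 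Combining these with the cardinality asymptotics $|H(N)|\asymp_d N(\log N)^{d-1}$, i.e.\ $N\asymp|H(N)|/(\log|H(N)|)^{d-1}$, turns the first summand $\sup_{\bm k\notin I}\lambda_{\bm k}$ and, since $2s>1$, also the second summand $\tfrac{r}{|I|}\sum_{\bm k\in\Imz\setminus I}\lambda_{\bm k}\le\tfrac{r}{|I|}\sum_{\bm k\notin I}\lambda_{\bm k}$ into quantities of order $|I|^{-2s}(\log|I|)^{2(d-1)s}$, which is the first term in \eqref{Cor1.2}.

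The third summand, the $\ell_\infty$-truncation error over the larger space, I would handle by Cauchy--Schwarz against the Sobolev weights: for $\|f\|_{H^s_{\textnormal{mix}}}\le1$, using $\|\eta_{\bm k}\|_{\ell_\infty(D)}=1$, $\|f-P_{\Imz}f\|_{\ell_\infty(D)}\le\sum_{\bm k\notin\Imz}|\hat f(\bm k)|\le\bigl(\sum_{\bm k\notin\Imz}|\hat f(\bm k)|^2\lambda_{\bm k}^{-1}\bigr)^{1/2}\bigl(\sum_{\bm k\notin\Imz}\lambda_{\bm k}\bigr)^{1/2}\le\bigl(\sum_{\bm k\notin\Imz}\lambda_{\bm k}\bigr)^{1/2}$, so the squared error is at most the tail sum $\sum_{\bm k\notin\Imz}\lambda_{\bm k}\lesssim_{d,s}|\Imz|^{-2s+1}(\log|\Imz|)^{2(d-1)s}$ by the same tail estimate at level $N_2$. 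This is exactly the second term in \eqref{Cor1.2}, and assembling the three contributions with all $d,s$-dependent constants collected into $C_{d,s}$ yields the claimed bound; the success probability $1-4/|I|$ is inherited verbatim from the Theorem.

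The main obstacle I anticipate is the deterministic tail estimate $\sum_{\bm k\notin H(N)}\lambda_{\bm k}\lesssim_{d,s}N^{-2s+1}(\log N)^{d-1}$ together with the precise translation between the level $N$ and the cardinality $|H(N)|$: one must track exactly the power of the logarithm generated by the $(d-1)$-fold divisor structure of the hyperbolic cross and confirm that the regime $2s>1$ makes the discarded tail sum of the same order as the single largest discarded eigenvalue (up to the logarithmic factor), so that the first two summands genuinely coincide in order. These are classical dominating-mixed-smoothness estimates, but matching the exponents of $\log|I|$ and $\log|\Imz|$ exactly as in \eqref{Cor1.2} is where the care is required.
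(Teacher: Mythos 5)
Your proposal is correct and follows essentially the same route as the paper: reduce to the general subsampling theorem using that a reconstructing rank-1 lattice gives a tight $L_2$-MZ inequality ($A=B$, $\omega_i=1/M$), bound the first two summands by the hyperbolic-cross tail estimate $\sum_{\bm k\notin I}\lambda_{\bm k}\lesssim|I|^{-2s+1}(\log|I|)^{2(d-1)s}$, and handle the $\ell_\infty$-term by Cauchy--Schwarz against the Sobolev weights. The only cosmetic difference is that you re-derive that tail estimate from the level-versus-cardinality asymptotics of the hyperbolic cross, whereas the paper cites it directly from the literature.
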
 

Note, that the size $M$ of the initial reconstructing rank-1 lattice for $\Imz$ has to be roughly quadratic in $|\Imz|$, cf.\ Lemma~\ref{thm:r1} (iii). We work with merely linear oversampling ($n = |\bm X'| \le \lceil b|I|\rceil$) achieving the optimal error decay up to a logarithmic factor.
There exists an initial lattice of size $M$ such that the bound in \eqref{Cor1.2} above, stated in terms of the number of sample points $n$ and initial lattice size $M$, is of order less than
\begin{equation*}
        \log n\Big(n^{-2s}(\log n)^{(d-1)2s}+M^{-s+1/2}(\log M)^{(d-1)(s+1)-s}\Big)
\end{equation*}
using Theorem~\ref{thm:r1}.
Whereas full rank-$1$ lattices achieve only half the main order of convergence, the subsampled rank-$1$ lattice achieves the optimal main order of convergence.
Moreover, we are still able to utilize the well-tried memory efficient fast Fourier algorithms with the low number of samples.

For the structure of this paper we investigate $L_2$-MZ points in Section~\ref{sec:lsqr} and show their relation to exact quadrature and establish a connection to finite frames.
In Section~\ref{sec:submz} we use this frame characterization to apply different subsampling techniques in order to reduce the number of points and obtain a new $L_2$-MZ inequality.
The infinite-dimensional case is considered in Section~\ref{sec:general}, where we modify the subsampling such that we control and improve on the approximation error.
We finish with an application in Section~\ref{sec:t} on the $d$-dimensional torus $\mathds T^d$ with rank-1 lattices and conduct numerical experiments. 
\section{Exact quadrature, \texorpdfstring{$L_2$}{L2}-MZ inequalities, and least squares}\label{sec:lsqr}
The problem in this section is the recovery of functions $f\colon D\to\mathds C$ defined on some domain $D\subset\mathds R^d$.
For that we have given samples $\bm f = (f(\bm x^1), \dots, f(\bm x^M))\transp \in \mathds C^M$ in points $\Xmz = \{\bm x^1, \dots, \bm x^M\} \subset D$.
For now, we assume $f$ to be an element of a finite-dimensional function space $V\subset L_2$ with dimension smaller than $M$ (oversampling).
Our method of choice to recover the function from its samples is least squares approximation and, as we will see, $L_2$-MZ inequalities come into play in a natural way.

At first, for a $\sigma$-finite measure $\nu : D\to\mathds R$, we define the space of square-integrable functions $L_2 = L_2(D,\nu)$ with
\begin{align*}
    \langle f,g\rangle_{L_2}
    \coloneqq \int_D f(\bm x)\overline{g(\bm x)} \,\mathrm d\nu(\bm x)
    \quad\text{and}\quad
    \|f\|_{L_2}
    \coloneqq \Big( \int_D |f(\bm x)|^2 \,\mathrm d\nu(\bm x)\Big)^{1/2}.
\end{align*}
Having an inner product we calculate an orthonormal basis $\{ \eta_k \}_{k\in \Imz}$ of $V$ and, with that, have an expansion for functions $f\in V$:
\begin{align}\label{eq:fexpand}
    f(\bm x) = \sum_{k\in \Imz} a_k \eta_k(\bm x)
    \quad\text{with}\quad
    \bm a = (a_k)_{k\in \Imz} = (\langle f, \eta_k\rangle_{L_2})_{k\in \Imz}.
\end{align}
Now we seek a way to recover the coefficients $\bm a$ from the samples $\bm f$.
The following system of linear equations establishes this connection 
\begin{align}\label{eq:lsqrmatrix}
    \bm L \bm{a} = \bm{f}
    \quad\text{with}\quad
    \bm L \coloneqq
    \bm L_{\Xmz}^{\Imz}
    = \begin{pmatrix}
        \eta_1(\bm x^1) & \cdots & \eta_{|\Imz|}(\bm x^1)\\
        \vdots & \ddots & \vdots\\
        \eta_1(\bm x^M) & \cdots & \eta_{|\Imz|}(\bm x^M)
    \end{pmatrix}.
\end{align}
Assuming $\bm L$ has full rank, a stable way to solve this overdetermined system of equations is to minimize the residual $\|\bm L\bm a - \bm f\|_2^2$.
The solution is obtained by solving the normal equation and the approximation $S_{\Imz}^{\Xmz} f$ takes the form
\begin{align*}
    (S_{\Imz}^{\Xmz} f)(\bm x)
    \coloneqq \sum_{k\in \Imz}a_k \eta_k(\bm x)
    \quad\text{with}\quad
    \bm a = (\bm L^\ast\bm L)\inv\bm L^\ast \bm f.
\end{align*}

Further, we consider weighted versions of the least squares algorithm.
For a diagonal matrix $\Wmz = \diag(\omega_1, \dots, \omega_M)\in[0,\infty)^{M\times M}$ the weighted least squares approximation $S_{\Imz}^{\Xmz} f = S_{\Imz}^{\Xmz} (\Wmz) f$ takes the form
\begin{align}\label{eq:lsqr}
    (S_{\Imz}^{\Xmz} f)(\bm x)
    = \sum_{k\in \Imz} a_k \eta_k(\bm x)
    \quad\text{with}\quad
    \bm{a}
    = (\bm L^\ast\Wmz\bm L)\inv\bm L^\ast\Wmz \bm f
\end{align}
where $\bm{a}$ is the minimizer of $\bm a \mapsto \|\bm L\bm a - \bm f\|_{\Wmz}^2 = \|\Wmz^{1/2}(\bm L\bm a - \bm f)\|_2^2$.
So it is obtained from the unweighted least squares version by a change of measure using the discrete weights $\Wmz$.

If the singular values of $\Wmz^{1/2}\bm L$ are non-zero, we immediately obtain that the least squares algorithm $S_{\Imz}^{\Xmz}$ reconstructs all functions $f\in V$.
The stability of the reconstruction as well as the number of iterations 
of a conjugate gradient method used to solve the system of equations heavily depends on the condition number of the matrix $(\bm L\herm\Wmz\bm L)\inv\bm L\herm\Wmz^{1/2}$, cf.\ \cite[Theorem~3.1.1]{gre}.

\begin{lemma}\label{prop2}\cite[Proposition 3.1]{KUV19} Let $\Wmz^{1/2}\bm L\in \mathbb C^{M\times |\Imz|}$ be a matrix with $|\Imz|\leq M$ with non-zero singular values.
    Then
    \begin{flalign*}
        && \sigma_{\max} \Big((\bm L\herm\Wmz\bm L)\inv\bm L\herm\Wmz^{1/2}\Big) &= \frac{1}{\sigma_{\min}(\Wmz^{1/2}\bm L)} && \\
        \text{and} && \sigma_{\min} \Big((\bm L\herm\Wmz\bm L)\inv\bm L\herm\Wmz^{1/2}\Big) &= \frac{1}{\sigma_{\max}(\Wmz^{1/2}\bm L)} \,. &&
    \end{flalign*}
\end{lemma}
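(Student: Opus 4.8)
The plan is to recognize that both identities are a single statement about the Moore--Penrose pseudoinverse, and then to read off the singular values from an SVD. To this end I abbreviate $\bm A \coloneqq \Wmz^{1/2}\bm L \in \mathbb C^{M\times|\Imz|}$. Since $\Wmz^{1/2}$ is Hermitian we have $\bm A\herm = \bm L\herm\Wmz^{1/2}$ and $\bm A\herm\bm A = \bm L\herm\Wmz\bm L$, so the matrix under consideration equals $(\bm A\herm\bm A)\inv\bm A\herm$. Because $|\Imz|\le M$ and, by hypothesis, all singular values of $\bm A$ are non-zero, the matrix $\bm A$ has full column rank; hence $\bm A\herm\bm A$ is invertible and $(\bm A\herm\bm A)\inv\bm A\herm$ is well defined. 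This is precisely $\bm A^{+}$, the pseudoinverse of $\bm A$.

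Next I would introduce a singular value decomposition $\bm A = \bm U\bm\Sigma\bm V\herm$ with unitary matrices $\bm U\in\mathbb C^{M\times M}$ and $\bm V\in\mathbb C^{|\Imz|\times|\Imz|}$, and $\bm\Sigma\in\mathbb R^{M\times|\Imz|}$ carrying the singular values $\sigma_1\ge\dots\ge\sigma_{|\Imz|}>0$ in its leading diagonal block and zeros below. Then $\bm A\herm\bm A = \bm V\,\diag(\sigma_k^2)\,\bm V\herm$, so $(\bm A\herm\bm A)\inv = \bm V\,\diag(\sigma_k^{-2})\,\bm V\herm$, and using unitarity of $\bm V$ the product telescopes to
\[
    (\bm A\herm\bm A)\inv\bm A\herm
    = \bm V\,\diag(\sigma_k^{-2})\,\bm\Sigma\herm\,\bm U\herm
    = \bm V\,\bm\Sigma^{+}\,\bm U\herm\,,
\]
where $\bm\Sigma^{+}\in\mathbb R^{|\Imz|\times M}$ is the diagonal matrix with entries $\sigma_k\inv$ in its leading block. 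The right-hand side is again an SVD (up to ordering of the singular values), so the singular values of $(\bm L\herm\Wmz\bm L)\inv\bm L\herm\Wmz^{1/2}$ are exactly $\{\sigma_k\inv\}_{k=1}^{|\Imz|}$.

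Finally, reading off the largest and the smallest of these reciprocals yields $\sigma_{\max}\big((\bm L\herm\Wmz\bm L)\inv\bm L\herm\Wmz^{1/2}\big)=\sigma_{|\Imz|}\inv=1/\sigma_{\min}(\Wmz^{1/2}\bm L)$ and $\sigma_{\min}\big((\bm L\herm\Wmz\bm L)\inv\bm L\herm\Wmz^{1/2}\big)=\sigma_1\inv=1/\sigma_{\max}(\Wmz^{1/2}\bm L)$, which are the two asserted identities. I do not expect a genuine obstacle here; the proof is essentially one SVD computation. The only point demanding mild care is the bookkeeping with the rectangular $\bm\Sigma$: one must check that $\diag(\sigma_k^{-2})\,\bm\Sigma\herm$ equals $\bm\Sigma^{+}$ exactly (entrywise $\sigma_k^{-2}\cdot\sigma_k=\sigma_k\inv$ in the leading block, zero outside), so that the cancellation is exact and no spurious components survive in the zero rows coming from $M>|\Imz|$.
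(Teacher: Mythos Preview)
Your proof is correct. The paper does not supply its own proof of this lemma; it simply cites \cite[Proposition~3.1]{KUV19}. Your argument---recognizing $(\bm L\herm\Wmz\bm L)\inv\bm L\herm\Wmz^{1/2}$ as the Moore--Penrose pseudoinverse of $\bm A=\Wmz^{1/2}\bm L$ and reading off the singular values from the SVD $\bm A=\bm U\bm\Sigma\bm V\herm$---is the standard way to establish this fact, and the bookkeeping with the rectangular $\bm\Sigma$ that you flag is handled correctly.
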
 

This motivates having a look at lower and upper bounds for the singular values of $\Wmz^{1/2}\bm L$.
The following lemma gives different characterizations for these bounds.

\begin{lemma}\label{lemma:character} Let $\eta_k\colon D\to\mathds C$, $k\in \Imz$ be an $L_2$-orthonormal basis of a finite-dimensional function space $V$, let $\Xmz = \{\bm x^1, \dots, \bm x^M\}\subset D$ be points and $\Wmz = \diag(\omega_1, \dots, \omega_M)\in[0,\infty)^{M\times M}$ weights.
    Then the following are equivalent:
    \begin{enumerate}[(i)]
    \item
        the singular values of the matrix $\Wmz^{1/2}\bm L \in \mathds C^{M\times|\Imz|}$, where $\bm L$ is defined in \eqref{eq:lsqrmatrix}, lie in the interval $[\sqrt A, \sqrt B]$, i.e.,
        \begin{align*}
            A \|\bm a\|_2^2
            \le \|\Wmz^{1/2}\bm L\bm a\|_2^2
            \le B \|\bm a\|_2^2
            \quad\text{for all}\quad
            \bm a \in\mathds C^{|\Imz|} \,;
        \end{align*}
    \item
        the rows $\sqrt{\omega_i}(\eta_k(\bm x^i))_{k\in \Imz} \in \mathds C^{|\Imz|}$, $i=1,\dots,M$, of the matrix $\Wmz^{1/2}\bm L$ form a frame with bounds $A$ and $B$, i.e.,
        \begin{align*}
            A \|\bm a\|_2^2
            \le \sum_{i=1}^M |\langle\bm a, \sqrt{\omega_i}(\eta_k(\bm x^i))_{k\in \Imz}\rangle|^2
            \le B \|\bm a\|_2^2
            \quad\text{for all}\quad
            \bm a \in\mathds C^{|\Imz|}\,;
        \end{align*}
    \item
        the points $\Xmz$ and weights $\Wmz$ form an $L_2$-MZ inequality for the function space $V = \spn\{\eta_k\}_{k\in \Imz}$ with bounds $A$ and $B$, i.e.,
        \begin{align}\label{eq:mz}
            A\|f\|_{L_2}^2
            \le \sum_{i=1}^M \omega_i |f(\bm x^i)|^2
            \le B\|f\|_{L_2}^2
            \quad\text{for all}\quad
            f\in V\,.
        \end{align}
    \end{enumerate}
\end{lemma}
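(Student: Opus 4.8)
The plan is to route everything through condition (i) by proving the two equivalences (i)~$\Leftrightarrow$~(iii) and (i)~$\Leftrightarrow$~(ii) separately, in each case via a single identity that rewrites the quantity in question as $\|\Wmz^{1/2}\bm L\bm a\|_2^2$. Once these identities are in place, all three statements collapse to the same chain of inequalities ranging over the same set of vectors, and the equivalence is immediate.

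First I would set up the dictionary between functions and coefficient vectors. Every $f\in V$ is written as $f=\sum_{k\in\Imz}a_k\eta_k$ with $\bm a=(\langle f,\eta_k\rangle_{L_2})_{k\in\Imz}$ as in \eqref{eq:fexpand}, and conversely every $\bm a\in\mathds C^{|\Imz|}$ arises this way, so $f\mapsto\bm a$ is a bijection from $V$ onto $\mathds C^{|\Imz|}$. Because $\{\eta_k\}_{k\in\Imz}$ is an $L_2$-orthonormal basis of $V$, Parseval's identity yields the isometry $\|f\|_{L_2}^2=\|\bm a\|_2^2$. This is the only place where $L_2$-orthonormality (and not merely the basis property) is used.

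Next I would compute the sampling term. Since the $i$-th row of $\bm L$ in \eqref{eq:lsqrmatrix} is $(\eta_k(\bm x^i))_{k\in\Imz}$, we have $(\bm L\bm a)_i=\sum_{k\in\Imz}a_k\eta_k(\bm x^i)=f(\bm x^i)$, hence $(\Wmz^{1/2}\bm L\bm a)_i=\sqrt{\omega_i}\,f(\bm x^i)$ and therefore
\[
    \|\Wmz^{1/2}\bm L\bm a\|_2^2=\sum_{i=1}^M\omega_i\,|f(\bm x^i)|^2 \,.
\]
Combining this with the Parseval isometry, the $L_2$-MZ inequality \eqref{eq:mz} in (iii) reads verbatim $A\|\bm a\|_2^2\le\|\Wmz^{1/2}\bm L\bm a\|_2^2\le B\|\bm a\|_2^2$; since $f\mapsto\bm a$ is a bijection, quantifying over all $f\in V$ is the same as quantifying over all $\bm a\in\mathds C^{|\Imz|}$, which is precisely (i). This settles (i)~$\Leftrightarrow$~(iii).

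For (i)~$\Leftrightarrow$~(ii) I would simply unwind the definition of a frame. Writing out the inner product, $\langle\bm a,\sqrt{\omega_i}(\eta_k(\bm x^i))_{k\in\Imz}\rangle=\sqrt{\omega_i}\sum_{k}a_k\overline{\eta_k(\bm x^i)}=\overline{(\Wmz^{1/2}\bm L\overline{\bm a})_i}$, so that the frame quantity $\sum_{i=1}^M|\langle\bm a,\sqrt{\omega_i}(\eta_k(\bm x^i))_{k\in\Imz}\rangle|^2$ equals $\|\Wmz^{1/2}\bm L\overline{\bm a}\|_2^2$. Both $\|\cdot\|_2$ and the quantification over all of $\mathds C^{|\Imz|}$ are invariant under $\bm a\mapsto\overline{\bm a}$, so the frame bounds $A,B$ in (ii) coincide with the singular-value bounds of (i). There is no genuine obstacle in this lemma; it is a bookkeeping exercise, and the only points deserving care are the Parseval isometry, which requires orthonormality, and the harmless complex conjugation hidden in the frame inner product.
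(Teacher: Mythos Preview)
Your proof is correct and uses exactly the same ingredients as the paper's: Parseval's identity for $\|f\|_{L_2}^2=\|\bm a\|_2^2$, the identification $(\bm L\bm a)_i=f(\bm x^i)$, and the conjugation bookkeeping for the frame inner product. The only cosmetic difference is that the paper proves (i)$\Leftrightarrow$(ii) and (ii)$\Leftrightarrow$(iii) while you route both through (i); the underlying identities are identical.
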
 

\begin{proof} \textbf{Step 1.} To show the equivalence of (i) and (ii) we rewrite the matrix-vector product $\Wmz^{1/2}\bm L\overline{\bm a}$ using the Euclidian inner products of $\bm a$ and the rows of $\Wmz^{1/2}\bm L$ as follows
\begin{align*}
    A \|\bm a\|_2^2
    =\! A \|\overline{\bm a}\|_2^2
    \le \| \Wmz^{1/2}\bm L\overline{\bm a} \|_2^2
    =\! \sum_{i=1}^M |\langle\bm a, \sqrt{\omega_i}(\eta_k(\bm x^i))_{k\in \Imz}\rangle_2|^2
    \le\! B \|\overline{\bm a}\|_2^2=B \|\bm a\|_2^2\,.
\end{align*}
This immediately shows the equivalence of the two stated conditions.

\textbf{Step 2.} Now we show the equivalence of (ii) and (iii).
Using the series expansion \eqref{eq:fexpand} of $f$, we have
\begin{align*}
    \sum_{i=1}^M |\langle\overline{\bm a}, \sqrt{\omega_i}(\eta_k(\bm x^i))_{k\in \Imz}\rangle_2|^2
    = \sum_{i=1}^M \omega_i \Big| \sum_{k\in \Imz} a_k \eta_k(\bm x^i) \Big|^2 
    = \sum_{i=1}^M \omega_i |f(\bm x^i)|^2
\end{align*}
and further using Parseval's identity, we obtain
\begin{align*}
    \|\overline{\bm a}\|_2^2 =\|\bm a\|_2^2 
    = \Big\| \sum_{k\in \Imz} a_k \eta_k(\bm x^i) \Big\|_{L_2}^2
    = \|f\|_{L_2}^2\,.
\end{align*}
Plugging these two formulas into the frame condition, we obtain a reformulation in terms of functions, which is an $L_2$-MZ inequality for the function space $V$ with bounds $A$ and $B$.
\end{proof} 

The $L_2$-MZ characterization supports the availability of points with well-behaved least squares matrices.
With Lemma~\ref{lemma:character}, using a set of points $\Xmz$ coming from an $L_2$-MZ inequality with well-behaved constants, we automatically have a good point set to use in least squares approximation, and vice versa.
Point sets fulfilling $L_2$-MZ inequalities are widely available and well-studied, cf.~\cite{MhNaWa01, KeKuPo07, FiMh11, CDL13, CM17, Te18}.
Furthermore, the next theorem shows an equivalence to an exact integration condition when the constants in the $L_2$-MZ inequality coincide.
This widens the applicability even further, cf.~\cite{CoNu07, Tref13, KaPoVo13}.

\begin{theorem}\label{lemma:mzquadrature} The points $\Xmz=\{\bm x^1,\dots,\bm x^M\}\subset D$ and weights $\Wmz = \diag(\omega_1, \dots, \omega_M)\in [0,\infty)^{M\times M}$ obey an $L_2$-MZ inequality \eqref{eq:mz} on the function space $V = \spn\{\eta_k\}_{k\in \Imz}$ with constants $A=B$ if and only if we have exact quadrature on $V \cdot \overline{V} \coloneqq \{f = \overline{g}\cdot h : g, h\in V\}$, i.e., we have
    \begin{align*}
        \sum_{i=1}^M \omega_i g(\bm x^i) \overline{h(\bm x^i)}
        = A \int_D g(\bm x) \overline{h(\bm x)} \,\mathrm d\nu(\bm x)
        \quad\text{for all}\quad
        g, h\in V\,.
    \end{align*}
\end{theorem}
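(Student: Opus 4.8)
The statement is an equivalence, so I would prove the two implications separately, the organizing observation being that the hypothesis $A=B$ collapses the two-sided inequality \eqref{eq:mz} into the single equality $\sum_{i=1}^M \omega_i |f(\bm x^i)|^2 = A\|f\|_{L_2}^2$ valid for every $f\in V$. The task is thus to pass between this diagonal equality of quadratic forms and the full bilinear quadrature identity.

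The implication from exact quadrature to the $L_2$-MZ identity is the easy one. Given the quadrature rule on $V\cdot\overline V$, I would specialize to $g=h=f$ for an arbitrary $f\in V$. Then $g(\bm x^i)\overline{h(\bm x^i)}=|f(\bm x^i)|^2$ and $\int_D g\,\overline h\,\mathrm d\nu = \|f\|_{L_2}^2$, so the quadrature identity becomes precisely $\sum_{i=1}^M \omega_i |f(\bm x^i)|^2 = A\|f\|_{L_2}^2$, which is \eqref{eq:mz} with $A=B$.

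The converse is where the actual content lies, and I would handle it by polarization. I would introduce the two sesquilinear forms on $V$
\begin{align*}
    \Phi(g,h) \coloneqq \sum_{i=1}^M \omega_i\, g(\bm x^i)\,\overline{h(\bm x^i)}
    \quad\text{and}\quad
    \Psi(g,h) \coloneqq A\int_D g(\bm x)\,\overline{h(\bm x)}\,\mathrm d\nu(\bm x)\,,
\end{align*}
each linear in the first argument and conjugate-linear in the second (the weights $\omega_i\ge 0$ being real). The $L_2$-MZ identity with $A=B$ says exactly that the associated quadratic forms agree on the diagonal, $\Phi(f,f)=\Psi(f,f)$ for all $f\in V$, and I want to upgrade this to $\Phi=\Psi$ as bilinear objects. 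Since $V$ is a complex vector space, and hence closed under the combinations $g+i^k h$, this is delivered verbatim by the complex polarization identity
\begin{align*}
    \Phi(g,h) = \frac14 \sum_{k=0}^3 i^k\, \Phi\big(g+i^k h,\,g+i^k h\big)
\end{align*}
together with the identical formula for $\Psi$: because the two quadratic forms coincide on all of $V$, every summand on the right agrees, whence $\Phi(g,h)=\Psi(g,h)$ for all $g,h\in V$. Unwinding the definitions, this is exactly the claimed exact quadrature on $V\cdot\overline V$.

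The only genuinely delicate point — and the place where one could go wrong — is that polarization recovers a sesquilinear form from its diagonal quadratic form only over the complex field; here the functions take values in $\mathds C$, so this is available. I would also stress that $A=B$ is essential: a two-sided inequality with $A<B$ merely sandwiches the diagonal and carries no information that polarizes into an exact bilinear identity.
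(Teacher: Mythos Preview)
Your proof is correct and follows essentially the same route as the paper's: the paper also specializes to $g=h$ for the easy direction and, for the converse, observes that equality of the diagonal quadratic forms forces equality of the associated inner products (the paper phrases this as ``by the parallelogram law'' rather than invoking the polarization identity, but the mechanism is identical).
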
 

\begin{proof} Starting with the $L_2$-MZ inequality, we have 
    \begin{equation*}
        \sum_{i=1}^{M} \omega_i |f(\bm x^i)|^2
        = A \int_D |f(\bm x)|^2 \;\mathrm d\nu(\bm x)
        \quad\text{for all}\quad
        f\in V \,.
    \end{equation*}
    By the parallelogram law the corresponding inner products also coincide, which is on direction of the assertion.
    The reverse is achieved by using $g = h$.
\end{proof} 

 \section{Subsampling of \texorpdfstring{$L_2$}{L2}-MZ inequalities}\label{sec:submz}
In this section we utilize recent random and deterministic frame subsampling techniques from \cite{BSU22} to select subsets of $L_2$-MZ point sets keeping their good approximation properties following the illustration in Figure~\ref{fig:scheme}.
The connection from $L_2$-MZ points to frames was already done in Lemma~\ref{lemma:character}.
The best possible start are tight frames, i.e., with frame bounds $A=B$, which is equivalent to having exact quadrature as we have seen in Theorem~\ref{lemma:mzquadrature}.

The next theorem covers random subsampling.

\begin{theorem}\label{thm:randomsubsample_finite} Let $0 < C \le 1$, $\{\eta_k\}_{k\in \Imz}$ be an $L_2$-orthonormal basis of the finite-dimensional function space $V$, let $\bm X = \{\bm x^1, \dots, \bm x^M\}\subset D$ be points and $\Wmz = \diag(\omega_1, \dots, \omega_M)\in[0,\infty)^{M\times M}$ weights fulfilling an $L_2$-MZ inequality for $V$ with constants $A$ and $B$.
    Further, let $t>0$, $I\subset\Imz$, and $n\in\mathds N$ be such that
    \begin{align}
        n \ge \frac{12B}{A C} |I| (\log |I|+t) \,.\label{equ:rand_subsampling_n}
    \end{align}
    We draw a set $\bm X = \{\bm x^i\}_{i\in J}$, $|J| = n$, of points i.i.d.\ from $\Xmz$ with respect to the discrete probability weights $\varrho_i$ (with duplicates), which fulfill
    \begin{align}
        \varrho_i \ge \frac{C \omega_i \sum_{k\in I}|\eta_k(\bm x^i)|^2}{\sum_{j=1}^{M} \omega_j \sum_{k\in I}|\eta_k(\bm x^j)|^2}
        \quad\text{for}\quad
        i=1,\dots,M\,.\label{eq:prob_density_christoffel_weighted_least_squares}
    \end{align}
    Then, with probability larger than $1-2\exp(-t)$, there holds the subsampled $L_2$-MZ inequality
    \begin{align}\label{eq:mz_subs}
        \frac{1}{2}A\|f\|_{L_2}^2
        \le \sum_{i \in J} \frac{\omega_i}{n\varrho_i} |f(\bm x^i)|^2
        \le \frac{3}{2}B\|f\|_{L_2}^2
        \quad\text{for all}\quad
        f\in \spn\{\eta_k\}_{k\in I}\,.
    \end{align}
\end{theorem}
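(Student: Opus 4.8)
The plan is to translate the claimed subsampled $L_2$-MZ inequality \eqref{eq:mz_subs} into a spectral statement about a random sum of rank-one matrices and then to invoke a matrix Chernoff bound. By Lemma~\ref{lemma:character} together with Parseval, writing $f=\sum_{k\in I}a_k\eta_k$ with coefficient vector $\bm a=(a_k)_{k\in I}$ yields $\|f\|_{L_2}^2=\|\bm a\|_2^2$ and $|f(\bm x^i)|^2=|\langle\bm a,\bm y^i\rangle|^2$, where $\bm y^i\coloneqq(\eta_k(\bm x^i))_{k\in I}\in\field C^{|I|}$ are the evaluation vectors. Consequently \eqref{eq:mz_subs} is equivalent to the Loewner sandwich $\tfrac12 A\,\Id\preceq\bm G\preceq\tfrac32 B\,\Id$ on $\field C^{|I|}$, where $\bm G\coloneqq\sum_{i\in J}\frac{\omega_i}{n\varrho_i}\bm y^i(\bm y^i)\herm$. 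It therefore suffices to control the extreme eigenvalues $\lambda_{\min}(\bm G)$ and $\lambda_{\max}(\bm G)$.

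Next I would realize $\bm G$ as a sum of i.i.d.\ matrices. Writing the draw as independent indices $j_1,\dots,j_n\in\{1,\dots,M\}$ with $\Prob(j_\ell=i)=\varrho_i$, we have $\bm G=\sum_{\ell=1}^n\bm X_\ell$ with independent positive semidefinite rank-one summands $\bm X_\ell\coloneqq\frac1n\frac{\omega_{j_\ell}}{\varrho_{j_\ell}}\bm y^{j_\ell}(\bm y^{j_\ell})\herm$. A direct computation gives $\Ept\bm X_\ell=\frac1n\sum_{i=1}^M\omega_i\bm y^i(\bm y^i)\herm=\frac1n\bm M$, so $\Ept\bm G=\bm M$. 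Since $I\subset\Imz$, every $f\in\spn\{\eta_k\}_{k\in I}$ lies in $V$, so the original $L_2$-MZ inequality with constants $A,B$ still applies on this subspace; by Lemma~\ref{lemma:character} this is precisely $A\,\Id\preceq\bm M\preceq B\,\Id$, whence $\mu_{\min}\coloneqq\lambda_{\min}(\Ept\bm G)\ge A$ and $\mu_{\max}\coloneqq\lambda_{\max}(\Ept\bm G)\le B$.

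The key quantitative input is a uniform almost-sure bound on $\lambda_{\max}(\bm X_\ell)$, and this is exactly where the sampling weights \eqref{eq:prob_density_christoffel_weighted_least_squares} enter. Each $\bm X_\ell$ is rank one, so $\lambda_{\max}(\bm X_\ell)=\frac1n\frac{\omega_{j_\ell}}{\varrho_{j_\ell}}\|\bm y^{j_\ell}\|_2^2$ with $\|\bm y^i\|_2^2=\sum_{k\in I}|\eta_k(\bm x^i)|^2$. Condition \eqref{eq:prob_density_christoffel_weighted_least_squares} gives $\frac{\omega_i\|\bm y^i\|_2^2}{\varrho_i}\le\frac1C\sum_{j=1}^M\omega_j\|\bm y^j\|_2^2=\frac1C\tr\bm M$, and $\tr\bm M\le|I|\,\lambda_{\max}(\bm M)\le B|I|$. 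Hence $\lambda_{\max}(\bm X_\ell)\le R\coloneqq\frac{B|I|}{Cn}$ almost surely.

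With $\mu_{\min},\mu_{\max}$ and $R$ in hand I would apply a matrix Chernoff bound (Tropp) in dimension $d=|I|$ with deviation $\delta=\tfrac12$ to both tails, using $\tfrac12 A\le\tfrac12\mu_{\min}$ and $\tfrac32 B\ge\tfrac32\mu_{\max}$ to align the deviation events with the target thresholds. The lower tail then gives $\Prob[\lambda_{\min}(\bm G)\le\tfrac12 A]\le|I|\,(\sqrt{2/e}\,)^{\mu_{\min}/R}$ and the upper tail $\Prob[\lambda_{\max}(\bm G)\ge\tfrac32 B]\le|I|\,(e^{1/2}(3/2)^{-3/2})^{\mu_{\max}/R}$. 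Both base factors are strictly below $1$, and since $\mu_{\min}/R\ge\frac{ACn}{B|I|}$ and $\mu_{\max}/R\ge\frac{ACn}{B|I|}$, each bound is at most $\exp\!\big(\log|I|-c\,\frac{ACn}{B|I|}\big)$ for a positive constant $c$. The hypothesis \eqref{equ:rand_subsampling_n} with the constant $12$ is calibrated precisely so that $c\,\frac{ACn}{B|I|}\ge\log|I|+t$ for both tails at once, making each probability at most $\exp(-t)$; a union bound yields the stated failure probability $2\exp(-t)$. The main obstacle is the bookkeeping of this final step: one must check that the single numerical constant $12$ dominates the reciprocals of $-\log\sqrt{2/e}$ and $-\log\big(e^{1/2}(3/2)^{-3/2}\big)$, so that one hypothesis simultaneously tames the smallest and the largest eigenvalue, and one must confirm that the event alignment via $\mu_{\min}\ge A$ and $\mu_{\max}\le B$ is legitimate.
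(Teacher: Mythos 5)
Your proposal is correct and follows essentially the same route as the paper: the paper also reduces \eqref{eq:mz_subs} to a spectral statement about the i.i.d.\ sum $\sum_i \frac{\omega_i}{n\varrho_i}\bm y^i\otimes\bm y^i$, bounds $\lambda_{\max}$ of each summand by $\frac{B|I|}{Cn}$ via \eqref{eq:prob_density_christoffel_weighted_least_squares} and the trace estimate, identifies the expectation with $\Lmz\herm\Wmz\Lmz$ whose spectrum lies in $[A,B]$ by Lemma~\ref{lemma:character}, and applies a Tropp-type matrix Chernoff bound with deviation $1/2$ plus a union bound. The only cosmetic difference is that the paper invokes the packaged concentration result from the subsampling reference rather than the raw two-sided Chernoff tails, and your closing numerical check (that $12$ exceeds the reciprocals $\approx 6.6$ and $\approx 9.3$ of the two tail exponents) indeed goes through.
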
 

\begin{proof} We will use \cite[Theorem~A.3]{BSU22} (which is a reformulation of \cite[Theorem~1.1]{Tr12}) with $\bm A_i = \frac{\omega_i}{n\varrho_i} \bm y^i\otimes\bm y^i$, $\bm y^i = (\eta_k(\bm x^i))^{k\in I}$ and $t=1/2$ (this is not the same $t$ as in the present assertion).
    By the property of $\varrho_i$ and the given $L_2$-MZ inequality, we have
    \begin{equation*}
        \lambda_{\max}(\bm A_i)
        = \frac{\omega_i}{n\varrho_i} \|\bm y^i\|_2^2
        \le \frac{1}{n C} \sum_{j=1}^{M} \omega_j \sum_{k\in I}|\eta_k(\bm x^j)|^2
        \le \frac{B|I|}{n C}\,.
    \end{equation*}
    Further,
    \begin{equation*}
        \sum_{i=1}^{n} \mathds E(\bm A_i)
        = \sum_{i=1}^{n} \sum_{j=1}^{M} \varrho_j \frac{\omega_j}{n\varrho_j} \bm y^j\otimes\bm y^j
        = \Lmz^\ast \Wmz\Lmz \,,
    \end{equation*}
    where $\Lmz$ is the system matrix with the points $\Xmz$ and $\Wmz = \diag(\omega_1, \dots, \omega_M)$.
    With Lemma~\ref{lemma:character}, the eigenvalues of the matrix $\Lmz^\ast\Wmz\Lmz$ are bounded by $A\le\lambda_{\min}(\Lmz^\ast\Wmz\Lmz) \le \lambda_{\max}(\Lmz^\ast\Wmz\Lmz) \le B$.
    Note, $A/2 \le \lambda_{\min}(\sum_{i=1}^{n} \bm A_i)$ and $\lambda_{\max}(\sum_{i=1}^{n} \bm A_i) \le 3B/2$ are the target inequalities.
    
    By applying \cite[Theorem~A.3]{BSU22} and using the assumption on $n$, both inequalities of the assertion hold with probability
    \begin{equation*}
        1-|I| \exp\Big(
        -\frac{An}{12CB|I|}
        \Big)
        \ge 1-\exp(-t) \,.
    \end{equation*}
    Union bound gives the overall probability and, thus, the assertion.
\end{proof} 

Thus, we found a subset $\bm X$ of $\Xmz$ with logarithmic oversampling (independent of the original size $M$) which fulfills an $L_2$-MZ inequality with similar constants.
The probability density stated in \eqref{eq:prob_density_christoffel_weighted_least_squares} is also used in Christoffel-weighted least squares, cf.\ \cite{NJZ17}.

\begin{remark} We could apply the orthogonalization trick from \cite[Lemma~4.3]{BSU22} to eliminate the factor $B/A$ in the assumption on the number of points $n$ in \eqref{equ:rand_subsampling_n}.
    But this demands for setting up the matrix $\bm L$ from \eqref{eq:lsqrmatrix} whereas in the above formulation, we only need the evaluation of the Christoffel function $N(I) = \sum_{k\in I} |\eta_k(\bm x)|^2$.
\end{remark} 

Next, we subsample $\bm X$ further to obtain merely linear oversampling.

\begin{theorem}\label{thm:bsssubsample_finite_weighted} Let the assumptions from Theorem~\ref{thm:randomsubsample_finite} hold and $\bm X$ be such that \eqref{eq:mz_subs} holds.
    Further, let $b > \kappa^2$ with
    \begin{align*}
        \kappa
        = \frac{3B}{2A}+\frac 12 + \sqrt{\Big(\frac{3B}{2A}+\frac 12\Big)^2-1}\,.
    \end{align*}
    Then \texttt{BSS}-subsampling (cf.\ \cite[Algorithm~1]{BSU22}) the points $\bm X$, we obtain points $\bm X' = \{\bm x^i\}_{i\in J'} \subset \bm X$ with $|\bm X'| \le \lceil b|I|\rceil$ and non-negative weights $s_i$, $i\in J'$ such that there holds the subsampled $L_2$-MZ inequality
    \begin{align*}
        \frac{1}{2}A\|f\|_{L_2}^2
        \le \sum_{i \in J'} \frac{\omega_i s_i}{n \varrho_i} |f(\bm x^i)|^2
        \le \frac{3}{2}\frac{(\sqrt b+1)^2}{(\sqrt b-1)(\sqrt b-\kappa)}B\|f\|_{L_2}^2
        \quad\forall
        f\in \spn\{\eta_k\}_{k\in I}\,.
    \end{align*}
\end{theorem}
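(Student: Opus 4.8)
The plan is to recast the hypothesis as a statement about a finite frame, invoke the deterministic subsampling routine of \cite{BSU22} as a black box, and translate the resulting frame bounds back into an $L_2$-MZ inequality via Lemma~\ref{lemma:character}. First I would read \eqref{eq:mz_subs} through Lemma~\ref{lemma:character}, applied with the orthonormal system $\{\eta_k\}_{k\in I}$ (an $L_2$-orthonormal basis of $\spn\{\eta_k\}_{k\in I}$ since $I\subset\Imz$), the point set $\{\bm x^i\}_{i\in J}$, and the discrete weights $\omega_i/(n\varrho_i)$. By the equivalence (ii)$\Leftrightarrow$(iii) the inequality \eqref{eq:mz_subs} says precisely that the vectors
\[
    \bm y^i \coloneqq \sqrt{\tfrac{\omega_i}{n\varrho_i}}\,(\eta_k(\bm x^i))_{k\in I}\in\mathds C^{|I|}, \qquad i\in J,
\]
form a frame for $\mathds C^{|I|}$ with bounds $\alpha = A/2$ and $\beta = 3B/2$; equivalently $\tfrac{A}{2}\,\Id \preceq \sum_{i\in J}\bm y^i\otimes\bm y^i \preceq \tfrac{3B}{2}\,\Id$, in the notation of the proof of Theorem~\ref{thm:randomsubsample_finite}.

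Then I would apply the general-frame \texttt{BSS}-subsampling theorem of \cite{BSU22} (\cite[Algorithm~1]{BSU22} together with its guarantee) to this frame in dimension $d=|I|$ with oversampling parameter $b$. The quantity $\kappa$ in the statement is exactly the condition-number-type constant that theorem attaches to the frame-bound ratio $\beta/\alpha = 3B/A$: writing $x=\tfrac{3B}{2A}+\tfrac12=\tfrac12(\beta/\alpha+1)$ one has $\kappa=x+\sqrt{x^2-1}$, equivalently $\kappa+\kappa^{-1}=\beta/\alpha+1$, so the hypothesis $b>\kappa^2$ is the admissibility condition $\sqrt b>\kappa$ of the cited result (note $\kappa>1$ here, since $B\ge A$ forces $x\ge 2$). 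The theorem returns an index set $J'\subset J$ with $|J'|\le\lceil b|I|\rceil$ and non-negative weights $s_i$, $i\in J'$, for which the reweighted system $\{\sqrt{s_i}\,\bm y^i\}_{i\in J'}$ is a frame whose lower bound is preserved at $\alpha=A/2$ while its upper bound is inflated by the factor $\tfrac{(\sqrt b+1)^2}{(\sqrt b-1)(\sqrt b-\kappa)}$, i.e.
\[
    \frac{A}{2}\,\Id \preceq \sum_{i\in J'} s_i\,\bm y^i\otimes\bm y^i \preceq \frac{3B}{2}\,\frac{(\sqrt b+1)^2}{(\sqrt b-1)(\sqrt b-\kappa)}\,\Id.
\]
In the tight-frame limit $\kappa\to1$ this degenerates to the classical Batson--Spielman--Srivastava bound $\tfrac{(\sqrt b+1)^2}{(\sqrt b-1)^2}$, a convenient consistency check.

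Finally I would run Lemma~\ref{lemma:character} backwards. Since $s_i\,\bm y^i\otimes\bm y^i = \tfrac{\omega_i s_i}{n\varrho_i}(\eta_k(\bm x^i))_{k\in I}\otimes(\eta_k(\bm x^i))_{k\in I}$, the displayed operator inequality is, by the equivalence (i)$\Leftrightarrow$(iii) of Lemma~\ref{lemma:character} for the weights $\omega_i s_i/(n\varrho_i)$, exactly the asserted $L_2$-MZ inequality on $\spn\{\eta_k\}_{k\in I}$ with constants $A/2$ and $\tfrac32\tfrac{(\sqrt b+1)^2}{(\sqrt b-1)(\sqrt b-\kappa)}B$. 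The main obstacle is not a new estimate but the bookkeeping at the interface with \cite{BSU22}: one must verify that the normalization of their theorem matches ours so that the lower bound comes out as $A/2$ and the upper bound carries precisely the stated factor and the size guarantee $\lceil b|I|\rceil$, and — should their statement be phrased for real frames or under a Parseval normalization — insert the routine realification or rescaling step. The barrier-function analysis underlying the size bound and the constant $\kappa$ is entirely internal to the cited result and need not be reproduced here.
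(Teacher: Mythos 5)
Your proposal is correct and follows exactly the paper's route: the paper's proof is a one-line application of the weighted \texttt{BSS} theorem \cite[Theorem~3.1]{BSU22} to the frame obtained from \eqref{eq:mz_subs} via the equivalence in Lemma~\ref{lemma:character}, which is precisely the translation you spell out in both directions. Your additional bookkeeping (identifying $\kappa$ with the frame-bound ratio via $\kappa+\kappa^{-1}=3B/A+1$ and the tight-frame consistency check) is sound but not needed beyond what the citation already provides.
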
 

\begin{proof} The result is an immediate consequence of applying \cite[Theorem~3.1]{BSU22} to the randomly subsampled points of Theorem~\ref{thm:randomsubsample_finite}.
\end{proof} 

The previous result is based on \cite{BaSpSr09} where tight frames were subsampled, which was used \cite{LiTe20} for subsampling random points.
This was then extended in \cite{BSU22} for non-tight frames as well, which we use here.
The \texttt{BSS}-algorithm gives no control over the weights $s_i$, however.
One alternative would be to use Weaver-subsampling to lose the weights, but this is highly nonconstructive and increases the involved constants tremendously, cf.\ \cite{NaSchUl20}.
A clever extension of the \texttt{BSS}-algorithm makes it possible to lose the weights, regain the constructiveness, choose a smaller oversampling factor $b$, and save the left-hand side of the $L_2$-MZ inequality, which is the important one as it allows for the reconstruction from the function evaluations $f(\bm x^i)$.

\begin{theorem}\label{thm:bsssubsample_finite} Let the assumptions from Theorem~\ref{thm:randomsubsample_finite} hold and $\bm X$ be such that \eqref{eq:mz_subs} holds.
    Further, let $I\subset \Imz$ and $b > 1+\frac{1}{|I|}$.
    Then \texttt{PlainBSS}-subsampling (cf.\ \cite[Algorithm~3]{BSU22}) the points $\bm X$, we obtain points $\bm X' = (\bm x^i)_{i\in J'} \subset \bm X$ with $|\bm X'| \le \lceil b|I|\rceil$ such that there holds the subsampled left $L_2$-MZ inequality
    \begin{align*}
        \frac{(b-1)^3}{178\,(b+1)^2}A \|f\|_{L_2}^2
        \le \frac{1}{|I|}\sum_{i \in J'} \frac{\omega_i}{\varrho_i} |f(\bm x^i)|^2
        \quad\text{for all}\quad
        f\in V\,.
    \end{align*}
\end{theorem}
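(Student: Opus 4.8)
The plan is to recast the randomly subsampled inequality \eqref{eq:mz_subs} as a pure finite-frame statement and then invoke the weightless variant of the \texttt{BSS}-procedure from \cite{BSU22}. By Lemma~\ref{lemma:character}, applied with the weights $\widetilde\omega_i = \omega_i/(n\varrho_i)$, the hypothesis \eqref{eq:mz_subs} is equivalent to saying that the vectors $\bm y^i = \sqrt{\widetilde\omega_i}\,(\eta_k(\bm x^i))_{k\in I}\in\mathds C^{|I|}$, $i\in J$, form a frame for $\mathds C^{|I|}$ with bounds $A/2$ and $3B/2$. Everything thus reduces to a deterministic frame-thinning problem: out of the $n$ vectors $\{\bm y^i\}_{i\in J}$ in dimension $|I|$, select a subset indexed by $J'$ with $|J'|\le\lceil b|I|\rceil$ whose lower frame bound is still controlled, while deliberately discarding both the upper bound and the individual weights.

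First I would feed this frame into \texttt{PlainBSS} (\cite[Algorithm~3]{BSU22}). This is the deterministic Batson--Spielman--Srivastava-type greedy scheme underlying Theorem~\ref{thm:bsssubsample_finite_weighted}, modified so that the per-vector multipliers $s_i$ produced by weighted \texttt{BSS} are replaced by a single flat rescaling and the two-sided control is relaxed to a one-sided (lower) bound. The governing estimate from \cite{BSU22} takes a frame with lower bound $A/2$ in dimension $|I|$, retains at most $\lceil b|I|\rceil$ of its $n$ members, and guarantees a surviving lower frame bound of the order $\tfrac{(b-1)^3}{(b+1)^2}\tfrac{|I|}{n}\cdot\tfrac{A}{2}$; the factor $|I|/n$ reflects that only a $\approx b|I|/n$ fraction of the vectors is kept and that the weights are flattened, and $b>1+\tfrac1{|I|}$ is precisely the admissibility threshold of that algorithm. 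Notably no $B/A$ ratio enters, since only the lower frame constant is being tracked.

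It then remains to translate the output back into $L_2$-MZ language via Lemma~\ref{lemma:character} and to reconcile the normalizations. Writing $f=\sum_{k\in I}a_k\eta_k$, so that $\|f\|_{L_2}^2=\|\bm a\|_2^2$ and $|f(\bm x^i)|^2=|\langle\bm a,(\eta_k(\bm x^i))_{k\in I}\rangle|^2$, the surviving lower frame bound reads $\sum_{i\in J'}\widetilde\omega_i|f(\bm x^i)|^2\gtrsim \tfrac{(b-1)^3}{(b+1)^2}\tfrac{|I|}{n}\tfrac{A}{2}\|f\|_{L_2}^2$. Multiplying through by $n/|I|$ converts $\widetilde\omega_i=\omega_i/(n\varrho_i)$ into the weight $\tfrac1{|I|}\tfrac{\omega_i}{\varrho_i}$ appearing in the statement, cancels the $|I|/n$ factor against the $n/|I|$ rescaling, and combines the remaining $\tfrac12$ (from the input bound $A/2$) with the numerical constant of the \cite{BSU22} estimate to produce exactly $\tfrac{(b-1)^3}{178(b+1)^2}A$. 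The bulk of this step is bookkeeping: matching the abstract constant in the \texttt{PlainBSS} guarantee so that the factor $2$ is absorbed into $178$, and verifying that the threshold $b>1+\tfrac1{|I|}$ is carried over unchanged.

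The main obstacle is that all genuine mathematical content sits inside the \texttt{PlainBSS} guarantee itself, namely the assertion that one may discard the $s_i$ from the weighted \texttt{BSS} output and replace them by a flat weight while only sacrificing the upper inequality and a polynomial factor in $b$. Proving this from scratch would mean reopening the barrier-function/potential analysis of \cite{BaSpSr09, BSU22}; taking \cite[Algorithm~3]{BSU22} and its accompanying theorem as given, the present claim is an immediate specialization, exactly parallel to the one-line derivation of Theorem~\ref{thm:bsssubsample_finite_weighted}.
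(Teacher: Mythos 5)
Your proposal is correct and follows essentially the same route as the paper: the paper's proof is precisely the one-line application of the unweighted \texttt{PlainBSS} subsampling result of \cite{BSU22} (their Corollary~4.5) to the frame produced by the random subsampling step of Theorem~\ref{thm:randomsubsample_finite}, with the frame/MZ translation supplied by Lemma~\ref{lemma:character}. Your bookkeeping of the normalization (the $n/|I|$ rescaling absorbing the factor $2$ from the input lower bound $A/2$ into the constant $178$) matches how the constant is used later in the proof of Theorem~\ref{thm:generalbss}.
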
 

\begin{proof} The result is an immediate consequence of applying \cite[Corollary~4.5]{BSU22} to the randomly subsampled points of Theorem~\ref{thm:randomsubsample_finite}.
\end{proof}  
\section{Sampling recovery in reproducing kernel Hilbert spaces}\label{sec:general}
In this section we tackle the same problem as in Section~\ref{sec:lsqr}, namely function recovery from samples, but now for infinite-dimensional function spaces.
We consider functions from separable reproducing kernel Hilbert spaces $H(K) \subset \mathds C^D$ with Hermitian positive definite kernels $K(\cdot, \cdot)$ on $D\times D$, where $D\subset \mathds R^d$ is some domain.
Fundamental is the reproducing property
\begin{align*}
    f(\bm x)
    = \langle f, K(\cdot, \bm x) \rangle_{H(K)}
\end{align*}
for all $\bm x\in D$, which implies that point evaluations are continuous functionals on $H(K)$.
Introductory information about these spaces can be found in \cite[Chapter~4]{BeTh04} or \cite[Chapter~1]{StChr08}.
In the framework of this paper, we assume the kernels to be bounded, i.e.,
\begin{equation}\label{eq:boundedkernel}
    \|K\|_{\ell_\infty(D)}
    \coloneqq \sqrt{\sup_{\bm x\in D} K(\bm x, \bm x)}
    < \infty \,.
\end{equation}
With $\|\cdot\|_{H(K)}^2 = \langle \cdot, \cdot \rangle_{H_K}$, this condition implies $\|f\|_{\ell_\infty(D)} \le \|K\|_{\ell_\infty(D)} \cdot \|f\|_{H(K)}$, which, in other words, is the continuous embedding of $H(K)$ into the space of essentially bounded functions $\ell_\infty(D)$.
Furthermore, we assume the finite trace property
\begin{align}\label{eq:finitetrace}
    \|K\|_2^2
    \coloneqq \int_D K(\bm x,\bm x) \,\mathrm d\nu(\bm x)
    < \infty \,,
\end{align}
which, for finite measures, follows from \eqref{eq:boundedkernel}.
This integrability condition ensures the compactness of the embedding operator
$ \Id \colon H(K) \hookrightarrow L_2 $
which is Hilbert-Schmidt.
Now we consider the compact and self-adjoint linear operator $W = \Id^\ast\circ\Id \colon H(K) \to H(K)$.
Applying the spectral theorem, we obtain the representation
\begin{align*}
    W f
    = \sum_{k=1}^{\infty} \lambda_k \langle f, e_k \rangle_{H(K)} e_k\,,
\end{align*}
where $(\lambda_k)_{k=1}^{\infty}$ is the non-increasing rearrangement of the eigenvalues of $W$ and $(e_k)_{k=1}^{\infty} \subset H(K)$ the system of eigenfunctions of $W$ which form an orthonormal system in $H(K)$.
Using the reproducing property, we obtain a representation of the kernel in terms of the eigenfunctions
\begin{align}\label{eq:anvil}
    K(\bm x, \bm y)
    = \sum_{k\in\mathds N}\langle K(\bm x, \cdot), e_k\rangle_{H(K)} e_k(\bm y)
    = \sum_{k\in\mathds N}\overline{e_k(\bm x)} e_k(\bm y)\,.
\end{align}
Further, we have
\begin{align*}
    \langle e_k, e_l \rangle_{L_2}
    = \langle \Id(e_k), \Id(e_l) \rangle_{L_2}
    = \langle W(e_k), e_l \rangle_{H(K)}
    = \lambda_k \langle e_k, e_l \rangle_{H(K)}
    = \lambda_k \delta_{k,l}\,.
\end{align*}
Therefore, with $\eta_k(\bm x) \coloneqq \sigma_k\inv e_k(\bm x)$ where $\sigma_k = \sqrt{\lambda_k}$ are the singular numbers of $\Id$, we obtain an orthonomal system in $L_2$.

\begin{remark}\label{remark:spectral}
    For $V = \spn\{\eta_k\}_{k\in I}$, $\eta_k$ $L_2$-orthonormal, we have for the projection $P_{I}f$ defined in \eqref{eq:projection}
    \begin{align*}
        \sup_{\|f\|_{H(K)} \le 1} \|f-P_{I} f\|_{\ell_\infty(D)}^2
        &= \sup_{\bm x\in D} \sup_{\|f\|_{H(K)} \le 1} \Big| \sum_{k\notin I} \langle f, e_k \rangle_{H(K)} e_k(\bm x) \Big|^2 \\
        &= \sup_{\bm x\in D} \sup_{\|\bm a\|_2^2 \le 1} \Big| \sum_{k\notin I} a_k e_k(\bm x) \Big|^2 \,.
    \end{align*}
    Interpreting this as a functional on $\ell_2(\mathds N \setminus I)$ applied to $\bm a = (a_k)_{k\notin I}$, we obtain that the above equals the norm of the Riesz representer $(e_k(\bm x))_{k\notin I}$.
    Thus,
    \begin{align*}
        \sup_{\|f\|_{H(K)} \le 1} \|f-P_{I} f\|_{\ell_\infty(D)}^2
        = \sup_{\bm x\in D} \sum_{k\notin I} |e_k(\bm x)|^2 \,.
\end{align*}
\end{remark}

 \subsection{Sampling recovery with \texorpdfstring{$L_2$}{L2}-MZ points}
A reconstruction model using finitely many samples cannot reconstruct every function in an infinite-dimensional function space.
Thus, we make an error.
In particular, for least squares we reconstruct functions $f\in V = \spn\{\eta_k\}_{k\in \Imz}$ but none in $H(K)\setminus V$.
In this section we quantify the worst-case error of the least-squares method using samples in $L_2$-MZ points.
Similar results can be found in \cite{Gr20}.

We consider the following set of assumptions on the initial points $\Xmz$ and weights $\Wmz$ for the theorems of this section.

\begin{assumption}\label{ass} Let $H(K)$ be a separable reproducing kernel Hilbert space with a finite trace \eqref{eq:finitetrace} and bounded \eqref{eq:boundedkernel} kernel.
    Further, let $\lambda_k$ and $\eta_k$ denote the eigenvalues and $L_2$-orthonormal eigenfunctions of the operator $W = \Id^\ast\circ\Id$ with $\Id\colon H(K)\hookrightarrow L_2$ the embedding.
    
    For $V = \spn\{\eta_k\}_{k\in\Imz} \subset H(K)$ a finite-dimensional function space, we assume the points $\Xmz\subset D$ and weights $\Wmz\in [0,\infty)^{M\times M}$ fulfill an $L_2$-MZ inequality for $V$ with constants $A$ and $B$.
\end{assumption} 

\begin{theorem}\label{thm:quaderror} Let Assumptions~\ref{ass} hold.
    Then the error of the reconstruction operator $S_{\Imz}^{\Xmz} = S_{\Imz}^{\Xmz}(\Wmz)$ is zero for functions in $V=\spn\{\eta_k\}_{k\in\Imz}$ and otherwise bounded as follows:
    \begin{align*}
        \sup_{\|f\|_{H(K)} \le 1} \Big\|f-S_{\Imz}^{\Xmz}f\Big\|_{L_2}^2
        \le& \sup_{k\notin \Imz} \lambda_k + \frac{\sum_{i=1}^{M} \omega_i}{A} \sup_{\|f\|_{H(K)}\le 1} \|f-P_{\Imz}f\|_{\ell_\infty(D)}^2 \,.
    \end{align*}
\end{theorem}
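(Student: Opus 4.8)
The plan is to split the error into an $L_2$-truncation part and a least squares stability part, exploiting that $S_{\Imz}^{\Xmz}$ reproduces $V$ exactly. First I would record that the left-hand $L_2$-MZ bound forces, via Lemma~\ref{lemma:character}, the singular values of $\Wmz^{1/2}\bm L$ to lie in $[\sqrt A,\sqrt B]$ with $\sqrt A>0$; hence $\Wmz^{1/2}\bm L$ has full column rank, so $S_{\Imz}^{\Xmz}$ is the identity on $V=\spn\{\eta_k\}_{k\in\Imz}$ and is linear in the sample vector.

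For $f\in H(K)$ put $g\coloneqq f-P_{\Imz}f$. Using linearity together with $S_{\Imz}^{\Xmz}(P_{\Imz}f)=P_{\Imz}f$, I would first establish the identity
\[
    f-S_{\Imz}^{\Xmz}f = g-S_{\Imz}^{\Xmz}g\,.
\]
Because $P_{\Imz}$ is the $L_2$-orthogonal projection onto $V$, the residual $g$ is $L_2$-orthogonal to $V$, whereas $S_{\Imz}^{\Xmz}g\in V$; Pythagoras then gives
\[
    \|f-S_{\Imz}^{\Xmz}f\|_{L_2}^2 = \|g\|_{L_2}^2 + \|S_{\Imz}^{\Xmz}g\|_{L_2}^2\,.
\]
Taking the supremum over the unit ball and using subadditivity of the supremum reduces the claim to estimating the two terms separately.

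For the first term I would expand $f=\sum_k c_k e_k$ in $H(K)$ with $\sum_k|c_k|^2=\|f\|_{H(K)}^2\le 1$, and use $\langle f,e_k\rangle_{L_2}=\langle f,W e_k\rangle_{H(K)}=\lambda_k c_k$ together with $\eta_k=\sigma_k\inv e_k$, $\sigma_k^2=\lambda_k$, to obtain $\langle f,\eta_k\rangle_{L_2}=\sigma_k c_k$. Consequently $\|g\|_{L_2}^2=\sum_{k\notin\Imz}\lambda_k|c_k|^2\le\sup_{k\notin\Imz}\lambda_k$, which is the first summand. For the second term I would write $S_{\Imz}^{\Xmz}g=\sum_{k\in\Imz}a_k\eta_k$ with $\bm a=(\bm L\herm\Wmz\bm L)\inv\bm L\herm\Wmz^{1/2}\,\Wmz^{1/2}\bm g$ and $\bm g=(g(\bm x^i))_{i=1}^M$; $L_2$-orthonormality of the $\eta_k$ yields $\|S_{\Imz}^{\Xmz}g\|_{L_2}^2=\|\bm a\|_2^2$, Lemma~\ref{prop2} bounds the spectral norm of $(\bm L\herm\Wmz\bm L)\inv\bm L\herm\Wmz^{1/2}$ by $1/\sqrt A$, and finally $\|\Wmz^{1/2}\bm g\|_2^2=\sum_{i=1}^M\omega_i|g(\bm x^i)|^2\le\big(\sum_{i=1}^M\omega_i\big)\|g\|_{\ell_\infty(D)}^2$. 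Recalling $g=f-P_{\Imz}f$ and taking the supremum produces the second summand.

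The genuinely delicate point is to use the exact-reproduction rewriting $f-S_{\Imz}^{\Xmz}f=g-S_{\Imz}^{\Xmz}g$ \emph{before} invoking orthogonality, so that both error contributions are carried by the single residual $g$; after that the argument is a routine assembly of Lemma~\ref{lemma:character}, Lemma~\ref{prop2}, and Parseval. One should also check that the samples $g(\bm x^i)$ are meaningful, which holds since $f$ and the finitely many $\eta_k$ are bona fide elements of $H(K)$ and hence pointwise defined through the reproducing property.
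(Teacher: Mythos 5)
Your proposal is correct and follows essentially the same route as the paper: the orthogonal (Pythagoras) splitting into the truncation part $\|f-P_{\Imz}f\|_{L_2}^2$ and the part $\|S_{\Imz}^{\Xmz}(P_{\Imz}f-f)\|_{L_2}^2$ carried by the residual, the spectral estimate $\sup_{k\notin\Imz}\lambda_k$ for the former, and Lemma~\ref{prop2} combined with Lemma~\ref{lemma:character} plus the bound $\sum_i\omega_i|g(\bm x^i)|^2\le(\sum_i\omega_i)\|g\|_{\ell_\infty(D)}^2$ for the latter. The only cosmetic difference is that you rewrite $f-S_{\Imz}^{\Xmz}f=g-S_{\Imz}^{\Xmz}g$ before invoking orthogonality, whereas the paper applies Pythagoras first and then uses the reproduction property; the two orderings are equivalent.
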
 

\begin{proof} The reconstructing property for functions in $V$ is immediate.
    For functions $f\in H(K)$ we use orthogonality of the projection \eqref{eq:projection} to obtain
    \begin{align}\label{eq:honululu}
        \|f-S_{\Imz}^{\Xmz}f\|_{L_2}^2
        = \|f-P_{\Imz} f\|_{L_2}^2 + \|P_{\Imz}f-S_{\Imz}^{\Xmz}f\|_{L_2}^2
    \end{align}
    of which we now estimate each summand individually.

    \textbf{Step 1.}
    We bound the first summand of \eqref{eq:honululu} by
    \begin{align*}
        \|f-P_{\Imz} f\|_{L_2}^2
        &= \Big\| \sum_{k\notin \Imz} \langle f, e_k \rangle_{H(K)} e_k \Big\|_{L_2}^2
        = \sum_{k\notin \Imz} \lambda_k |\langle f, e_k \rangle_{H(K)}|^2 \\
        &\le \|f\|_{H(K)}^2\sup_{k\notin \Imz} \lambda_k\,.
    \end{align*}
    
    \textbf{Step 2.}
    For the second summand of \eqref{eq:honululu} we use the reconstructing property  of $S_{\Imz}^{\Xmz}$ for functions in $V$
    \begin{align*}
        &\|P_{\Imz}f-S_{\Imz}^{\Xmz}f\|_{L_2}^2
        = \|S_{\Imz}^{\Xmz} (P_{\Imz}f-f) \|_{L_2}^2 \\
        &\,\le \Big\| ((\bm L_{\Xmz}^{\Imz})^\ast\Wmz\bm L_{\Xmz}^{\Imz})\inv(\bm L_{\Xmz}^{\Imz})^\ast\Wmz^{1/2}\Big\|_{2\to 2}^2 \Big\|\Wmz^{1/2} \Big((P_{\Imz}f-f)(\bm x^i)\Big)_{i=1}^M \Big\|_2^2\,.
    \end{align*}
    By Lemmata~\ref{prop2} and \ref{lemma:character} we have for the first factor
    \begin{align*}
        \Big\| ((\bm L_{\Xmz}^{\Imz})^\ast\Wmz\bm L_{\Xmz}^{\Imz})\inv(\bm L_{\Xmz}^{\Imz})^\ast\Wmz^{1/2}\Big\|_{2\to 2}^2
        \le \frac{1}{A}\,.
    \end{align*}
    Finally, we obtain
    \begin{align*}
        \sup_{\|f\|_{H(K)} \le 1}
        \|P_{\Imz}f-S_{\Imz}^{\Xmz}f\|_{L_2}^2
        &\le
        \frac{1}{A}
        \sup_{\|f\|_{H(K)} \le 1}
        \sum_{i=1}^M \omega_i |(P_{\Imz}f-f)(\bm x^i)|^2 \\
        &\le
        \frac{\sum_{i=1}^M \omega_i}{A}
        \sup_{\|f\|_{H(K)} \le 1}
        \|P_{\Imz}f-f\|_{\ell_\infty(D)}^2 \,.\qedhere
    \end{align*}
\end{proof} 

The above are rather basic bounds which hold in a general setting.
In special cases like the exponential functions, Chebyshev polynomials, and the half-period cosine functions this topic is examined in detail in \cite{KuoMiNoNu19} for rank-1 lattices.
In particular for function spaces with mixed smoothness there exist almost tight error bounds in \cite{ByKaUlVo16} which improve on Theorem~\ref{thm:quaderror}.
 \label{sec:mzrecovery}
\subsection{Sampling recovery with subsampled \texorpdfstring{$\mathrm L_2$}{L2}-MZ points}
The next step is to subsample the $L_2$-MZ points like in Section~\ref{sec:submz} whilst paying attention to the approximation error.
With that we obtain better error bounds and will relate the error using the subsampled points to the error of the initial point set.
In contrast to Section~\ref{sec:lsqr}, we subsample two frames and one Bessel sequence simultaneously from the points $\Xmz$.
In order to control the subsampling we use a change of measure via a convex combination of the respective probability densities.
Each point $\bm x^i$ from $\Xmz$ will be drawn with the following probability density:
\begin{align}\label{eq:density}
    \varrho_i
    = \frac{\displaystyle\omega_i\sum_{k\in I}|\eta_k(\bm x^i)|^2}{\displaystyle3\sum_{j=1}^M\omega_j\sum_{k\in I}|\eta_k(\bm x^j)|^2}
    + \frac{\displaystyle\omega_i\sum_{k\in \Imz\setminus I} |e_k(\bm x^i)|^2 }{\displaystyle3\sum_{j=1}^M\omega_j\sum_{k\in \Imz\setminus I} |e_k(\bm x^j)|^2} 
    + \frac{\omega_i}{3} \,.
\end{align}
Since $\varrho_i \ge 0$ and $\sum_{i=1}^M \varrho_i = 1$, these are proper density weights.
This is based on an idea first used in \cite{KrUl19}.

For the later analysis we need some preparations, starting with a concentration inequality for random infinite matrices from \cite{MoUl20}.

\begin{proposition}[{\cite[Proposition~3.8]{MoUl20}}]\label{MoUl20} Let $\bm u^1, \dots, \bm u^n$ be i.i.d.\ random sequences from $\ell_2(\mathds N)$.
    Let further $n\ge 3$, $t>0$, $M>0$ such that $\|\bm u^i\|_2 \le M$ almost surely and $\mathds E(\bm u^i\otimes\bm u^i) = \bm\Lambda$ for all $i=1,\dots, n$.
    Then
    \begin{align*}
        \Big\|\frac 1n \sum_{i=1}^{n} \bm u^i\otimes\bm u^i \Big\|_{2\to 2}
        \le \frac{21(\log(n)+t)}{n} M^2 + 2 \|\bm\Lambda\|_{2\to 2}
    \end{align*}
    with probability exceeding $1-2^{3/4}\exp(-t)$.
\end{proposition}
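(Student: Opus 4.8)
The statement is a matrix-Chernoff--type upper tail bound for a sum of independent, positive semidefinite, rank-one operators on $\ell_2(\mathds N)$, adapted to the infinite-dimensional setting. The plan is to set $\bm X_i \coloneqq \frac1n\,\bm u^i\otimes\bm u^i$, so that each $\bm X_i$ is self-adjoint, positive semidefinite and trace class with $\lambda_{\max}(\bm X_i) = \frac1n\|\bm u^i\|_2^2 \le M^2/n$ almost surely, while $\sum_{i=1}^n \mathds E(\bm X_i) = \bm\Lambda$. The sum $\bm S \coloneqq \sum_{i=1}^n \bm X_i = \frac1n\sum_{i=1}^n \bm u^i\otimes\bm u^i$ is then a positive, self-adjoint operator of rank at most $n$, and since it is positive we have $\|\bm S\|_{2\to 2} = \lambda_{\max}(\bm S)$; it therefore suffices to produce an upper tail bound for $\lambda_{\max}(\bm S)$.

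The main tool is the matrix Laplace transform method, but the usual device $\mathds P\{\lambda_{\max}(\bm S)\ge\tau\}\le e^{-\theta\tau}\,\mathds E\,\tr\exp(\theta\bm S)$ fails here because $\tr\exp(\theta\bm S)=\infty$ on the infinite-dimensional kernel of $\bm S$. First I would instead use the finite-rank-adapted inequality
\[
    e^{\theta\lambda_{\max}(\bm S)}-1 \le \tr\big(\exp(\theta\bm S)-\Id\big)\,,
\]
valid for $\theta>0$ because $\exp(\theta\bm S)-\Id$ is a positive operator whose trace is the sum of the at most $n$ nonzero terms $e^{\theta\mu_k}-1$, where the $\mu_k$ are the eigenvalues of $\bm S$. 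A Markov step then gives
\[
    \mathds P\{\lambda_{\max}(\bm S)\ge\tau\}
    \le \frac{\mathds E\,\tr(\exp(\theta\bm S)-\Id)}{e^{\theta\tau}-1}\,.
\]
Crucially, the rank bound caps the number of nonzero eigenvalues at $n$; this is precisely what will turn into the $\log n$ term and plays the role of the ambient-dimension prefactor in the classical Tropp bound.

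Next I would bound $\mathds E\,\tr(\exp(\theta\bm S)-\Id)$. Using independence together with a Golden--Thompson / Lieb-concavity subadditivity estimate for the trace cumulant (legitimate here since the operators involved are trace class), the almost-sure bound $\lambda_{\max}(\bm X_i)\le M^2/n$ and $\sum\mathds E\bm X_i=\bm\Lambda$, one obtains an estimate of the form $\mathds E\,\tr(\exp(\theta\bm S)-\Id) \le n\big(\exp(g(\theta)\,\|\bm\Lambda\|_{2\to 2})-1\big)$ with $g(\theta)=(e^{\theta M^2/n}-1)/(M^2/n)$. It then remains to choose $\theta$ of order $n/M^2$ (so that $e^{\theta M^2/n}$ is a fixed constant) and to insert the target value $\tau = \frac{21(\log n+t)}{n}M^2 + 2\|\bm\Lambda\|_{2\to 2}$: the additive $M^2$-part of $\tau$ controls the factor $n$ and produces $n\exp(-c(\log n+t))\le 2^{3/4}\exp(-t)$, while the $2\|\bm\Lambda\|_{2\to 2}$-part dominates the cumulant contribution $g(\theta)\|\bm\Lambda\|_{2\to 2}$. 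Passing to the complementary event yields the claim with probability exceeding $1-2^{3/4}\exp(-t)$.

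I expect the main obstacle to be the infinite-dimensional bookkeeping rather than the underlying concept: one must justify that $\tr(\exp(\theta\bm S)-\Id)$ is finite and equals a sum over at most $n$ eigenvalues, that this correctly replaces the dimension prefactor by $n$ in the operator (rather than matrix) Laplace-transform argument, and that the Golden--Thompson/Lieb machinery transfers to the relevant trace-class operators. The second delicate point is pinning down the explicit constants $21$ and $2^{3/4}$: they emerge from the scalar optimization over $\theta$ in the last step, and a clean but slightly suboptimal choice of $\theta$ (rather than the exact minimizer) is what produces these particular numbers.
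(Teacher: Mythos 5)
First, a remark on the comparison itself: this paper does not prove Proposition~\ref{MoUl20} at all --- it is imported verbatim from \cite[Proposition~3.8]{MoUl20} and used as a black box in Lemma~\ref{lemma:infinite}. So your proposal can only be judged on its own merits, not against an internal argument.

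Your overall architecture (Laplace transform method plus the observation that $\bm S=\frac1n\sum_i\bm u^i\otimes\bm u^i$ has rank at most $n$, so that the ambient dimension in the classical Chernoff prefactor can be replaced by $n$, producing the $\log n$) is the right instinct, and Step~1, including the inequality $e^{\theta\lambda_{\max}(\bm S)}-1\le\tr(\exp(\theta\bm S)-\Id)$ and the Markov step, is sound. The genuine gap is in Step~2, at the claimed bound $\mathds E\,\tr(\exp(\theta\bm S)-\Id)\le n\bigl(\exp(g(\theta)\|\bm\Lambda\|_{2\to 2})-1\bigr)$. The Lieb/Golden--Thompson subadditivity step pushes the expectation inside the exponential and yields an estimate governed by $\tr\bigl(\exp\bigl(\sum_i\log\mathds E e^{\theta\bm X_i}\bigr)-\Id\bigr)\le\tr\bigl(\exp(g(\theta)\bm\Lambda)-\Id\bigr)$. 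But $\bm\Lambda$ is \emph{not} of rank at most $n$ --- its range is in general infinite-dimensional --- so the rank-$n$ counting argument that was supposed to supply the prefactor $n$ is no longer available at precisely the point where you invoke it. What the deterministic operator $\exp(g(\theta)\bm\Lambda)-\Id$ actually gives you is the intrinsic-dimension prefactor $\tr\bm\Lambda/\|\bm\Lambda\|_{2\to 2}\le M^2/\|\bm\Lambda\|_{2\to 2}$ (finite here because $\tr\bm\Lambda=\mathds E\|\bm u^i\|_2^2\le M^2$), which leads to a bound with $\log\bigl(M^2/\|\bm\Lambda\|_{2\to 2}\bigr)$ in place of $\log n$ --- a different statement that neither implies nor is implied by the one you are asked to prove.

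To rescue the $\log n$, the rank reduction must be performed \emph{before} any expectation is moved inside the exponential. The standard way (Oliveira's argument for sums of rank-one operators, which is the ancestor of \cite[Proposition~3.8]{MoUl20}) is to first symmetrize, replacing $\bm S-\bm\Lambda$ by a Rademacher sum $\sum_i\varepsilon_i\bm A_i$ conditioned on the sample; conditionally on $\bm u^1,\dots,\bm u^n$ the joint range of the $\bm A_i$ is a \emph{fixed} subspace of dimension $O(n)$, so the finite-dimensional matrix Laplace-transform bound applies on that subspace with dimension prefactor $O(n)$, and only afterwards does one integrate over the sample. This also naturally explains why the conclusion carries $2\|\bm\Lambda\|_{2\to 2}$ rather than $\|\bm\Lambda\|_{2\to 2}$: one bounds the deviation $\|\bm S-\bm\Lambda\|_{2\to 2}$ and pays an extra $\|\bm\Lambda\|_{2\to 2}$ in the triangle inequality, rather than bounding $\lambda_{\max}(\bm S)$ directly as you propose. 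As written, your proof does not go through without this restructuring.
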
 

\begin{lemma}\label{lemma:infinite} Let Assumptions~\ref{ass} hold and let $\bm X = \{\bm x^i\}_{i\in J}$, $|J| = n$, be points drawn i.i.d.\,from $\Xmz$ with respect to the discrete density weights $\varrho_i$, defined in \eqref{eq:density}.
    Further, for $I\subset\Imz$, we define the matrices
    \begin{align*}
        \bm W = \diag(\omega_i/\varrho_i)_{i\in J}
        \quad\text{and}\quad
        \bm{\Phi}^{I'}_{\bm X}
        = (e_k(\bm x^i))_{i \in J,  k \in \Imz\setminus I} \,.
    \end{align*}
    Then we have with probability exceeding $1-2^{3/4} \exp(-t)$
    \begin{align*}
        \frac 1n \Big\| \bm W^{1/2}\bm\Phi_{\bm X}^{\Imz\setminus I} \Big\|_{2\to 2}^2
        &\le \frac{63(\log(n) + t)}{n} B\sum_{k\in \Imz\setminus I} \lambda_k + 2 B \sup_{k\notin I} \lambda_k \,.
    \end{align*}
\end{lemma}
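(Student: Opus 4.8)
The plan is to apply the matrix concentration inequality Proposition~\ref{MoUl20} to suitably normalized random vectors. For each of the $n$ i.i.d.\ draws $\ell = 1,\dots,n$, let $j_\ell\in\{1,\dots,M\}$ denote the (random) index of the selected point, drawn with probability $\varrho_{j_\ell}$, and set
\[
    \bm u^\ell = \sqrt{\tfrac{\omega_{j_\ell}}{\varrho_{j_\ell}}}\,\big(e_k(\bm x^{j_\ell})\big)_{k\in\Imz\setminus I}\,,
\]
viewed as an element of $\ell_2(\mathds N)$ by zero-padding. With this choice the rows of $\bm W^{1/2}\bm\Phi_{\bm X}^{\Imz\setminus I}$ are exactly the $\bm u^\ell$, so that $\tfrac1n\sum_{\ell=1}^n \bm u^\ell\otimes\bm u^\ell = \tfrac1n (\bm W^{1/2}\bm\Phi_{\bm X}^{\Imz\setminus I})\herm(\bm W^{1/2}\bm\Phi_{\bm X}^{\Imz\setminus I})$ and its operator norm equals $\tfrac1n\|\bm W^{1/2}\bm\Phi_{\bm X}^{\Imz\setminus I}\|_{2\to2}^2$, the quantity to be bounded. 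It then remains to supply an almost sure bound $M^2$ on $\|\bm u^\ell\|_2^2$ and a bound on $\|\bm\Lambda\|_{2\to2}$ with $\bm\Lambda = \mathds E(\bm u^\ell\otimes\bm u^\ell)$, and to feed both into the proposition (whose hypothesis $n\ge 3$ holds in our regime).

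First I would bound $\|\bm u^\ell\|_2^2 = \tfrac{\omega_{j_\ell}}{\varrho_{j_\ell}}\sum_{k\in\Imz\setminus I}|e_k(\bm x^{j_\ell})|^2$ using only the \emph{middle} summand of the density \eqref{eq:density}: since $\varrho_j$ is a sum of three non-negative terms, it dominates $\tfrac{\omega_{j}\sum_{k\in\Imz\setminus I}|e_k(\bm x^{j})|^2}{3\sum_{i}\omega_i\sum_{k\in\Imz\setminus I}|e_k(\bm x^{i})|^2}$, and after cancellation this yields $\|\bm u^\ell\|_2^2 \le 3\sum_{i=1}^M\omega_i\sum_{k\in\Imz\setminus I}|e_k(\bm x^i)|^2$ almost surely. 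Writing $e_k = \sqrt{\lambda_k}\,\eta_k$ turns the inner sum into $\sum_{k\in\Imz\setminus I}\lambda_k\sum_{i=1}^M\omega_i|\eta_k(\bm x^i)|^2$, whereupon the upper $L_2$-MZ inequality \eqref{eq:mz} applied to each normalized eigenfunction $\eta_k$ gives $\sum_i\omega_i|\eta_k(\bm x^i)|^2\le B$. Hence $M^2 := 3B\sum_{k\in\Imz\setminus I}\lambda_k$ is a valid almost sure bound, and $\tfrac{21(\log n+t)}{n}M^2$ reproduces the first term of the claim (the constant $63=3\cdot 21$).

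Next I would compute the expectation $\bm\Lambda = \mathds E(\bm u^\ell\otimes\bm u^\ell) = \sum_{j=1}^M\omega_j (e_k(\bm x^j))_k\otimes(e_k(\bm x^j))_k = \bm\Phi\herm\Wmz\bm\Phi$, where $\bm\Phi = (e_k(\bm x^i))_{i=1,\dots,M;\,k\in\Imz\setminus I}$ is the full matrix over all $M$ initial points. Factoring $\bm\Phi = \bm L'\bm D$ with $\bm L' = (\eta_k(\bm x^i))_{i,k}$ a column submatrix of $\bm L$ and $\bm D = \diag(\sqrt{\lambda_k})_{k\in\Imz\setminus I}$, I obtain $\|\bm\Lambda\|_{2\to2} = \|\bm D(\bm L')\herm\Wmz\bm L'\bm D\|_{2\to2}\le \|\bm D\|_{2\to2}^2\,\|\Wmz^{1/2}\bm L'\|_{2\to2}^2$. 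Here $\|\bm D\|_{2\to2}^2 = \sup_{k\in\Imz\setminus I}\lambda_k\le\sup_{k\notin I}\lambda_k$, while $\|\Wmz^{1/2}\bm L'\|_{2\to2}^2\le\|\Wmz^{1/2}\bm L\|_{2\to2}^2\le B$ by Lemma~\ref{lemma:character}(i), since passing to a column submatrix cannot enlarge the top singular value. Thus $2\|\bm\Lambda\|_{2\to2}\le 2B\sup_{k\notin I}\lambda_k$ gives the second term, and a single application of Proposition~\ref{MoUl20} yields the assertion with the stated probability $1-2^{3/4}\exp(-t)$.

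The two operator-norm estimates are routine; the only genuinely delicate point is the first step, namely recognizing that the convex-combination density \eqref{eq:density} was engineered precisely so that its middle summand alone controls $\|\bm u^\ell\|_2$, with the three-term averaging producing the factor $3$. One should also verify the edge cases where $\varrho_j=0$ or where the relevant Christoffel-type sum $\sum_{k\in\Imz\setminus I}|e_k(\bm x^j)|^2$ vanishes, in which the bound holds trivially and the corresponding point is never selected.
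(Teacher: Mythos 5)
Your proposal is correct and follows essentially the same route as the paper's proof: the same choice of random vectors $\bm u^\ell$, the same use of the middle summand of the density \eqref{eq:density} to get the almost sure bound $3B\sum_{k\in\Imz\setminus I}\lambda_k$ via the upper MZ inequality, the same factorization $\bm\Phi = \bm L'\diag(\sigma_k)$ to bound $\|\bm\Lambda\|_{2\to 2}\le B\sup_{k\notin I}\lambda_k$, and a single application of Proposition~\ref{MoUl20}. Your added remarks on the edge case $\varrho_j=0$ and on the provenance of the constant $63=3\cdot 21$ match the paper's (terser) treatment.
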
 

\begin{proof} Since points with probability $\varrho_i = 0$ will not be drawn, we assume $\varrho_i > 0$.
    Let
    \begin{align*}
        \bm L_{\Xmz}^{\Imz\setminus I}
        = \Big(\eta_k(\bm x^i)\Big)_{\bm x^i \in \Xmz,  k \in \Imz\setminus I}
        = \bm\Phi_{\Xmz}^{\Imz\setminus I}\diag(\sigma_k\inv)_{k \in \Imz\setminus I}.
    \end{align*}
    We want to apply Proposition~\ref{MoUl20}.
    To this end we define $\bm u^i = \sqrt{\omega_i / \varrho_i} (e_k(\bm x^i))_{k\in \Imz\setminus I}$.
    Note,
    \begin{align*}
        \sum_{i\in J} \bm u^i\otimes\bm u^i
        = \Big( \bm\Phi_{\bm X}^{\Imz\setminus I} \Big)\herm \bm W \bm\Phi_{\bm X}^{\Imz\setminus I} \,.
    \end{align*}
    Now we estimate $\|\bm u^i\|_2^2$ and $\|\mathds E \bm u^i\otimes\bm u^i\|_{2\to 2}$.
    
    We bound $\|\bm u^i\|_2$ by using the $L_2$-MZ inequality and $\|e_k\|_{L_2}^2 = \lambda_k$
    \begin{align*}
        \|\bm u^i\|_2^2
        \le 3\sum_{k\in \Imz\setminus I} \sum_{j=1}^{M} \omega_j |e_k(\bm x^j)|^2
        \le 3B\sum_{k\in \Imz\setminus I} \|e_k\|_{L_2}^2
= 3B \sum_{ k\in \Imz\setminus I} \lambda_k\,.
    \end{align*}
    We further have by the compatibility of the spectral norm and Lemma~\ref{lemma:character}
    \begin{align*}
        \Big\| \mathds E \bm u^i\otimes\bm u^i \Big\|_{2\to 2}
        &= \Big\| \Wmz^{1/2}\bm\Phi_{\Xmz}^{\Imz\setminus I} \Big\|_{2\to 2}^2 \\
        &\le \Big\| \Wmz^{1/2}\bm L_{\Xmz}^{\Imz\setminus I} \Big\|_{2\to 2}^2 \Big\|\diag(\sigma_k)_{\Imz\setminus I}\Big\|_{2\to 2}^2 \\
        &\le B \sup_{k\in \Imz\setminus I}\lambda_k\,,
    \end{align*}
    which gives the assertion after applying Proposition~\ref{MoUl20}.
\end{proof} 

We now formulate a central result of the paper, which bounds the worst-case reconstruction error for the least squares method where the points are drawn randomly from a discrete set of points fulfilling an $L_2$-MZ inequality, cf.\ middle of Figure~\ref{fig:scheme}.

\begin{theorem}\label{thm:general} Let Assumption~\ref{ass} hold and $I \subset  \Imz \subset \mathbb N$, $|I| \ge 3$, be an index set.
    For $n\in\mathds N$, $t>0$, and $r\ge 1$ such that
    \begin{align*}
        n
        \coloneqq \Big\lceil \frac{36 B}{A} |I| (\log |I|+t) \Big\rceil
        \le |I|^r,
    \end{align*}
    let $\bm X = (\bm x^i)_{i\in J}$, $|J| = n$, be points drawn i.i.d.\,from $\Xmz$ with respect to the discrete density weights $\varrho_i$, defined in \eqref{eq:density}.
    Then we have with probability exceeding $1 - 4\exp(-t)$
    \begin{align*}
        \sup_{\|f\|_{H(K)}\le 1} \!\!\! \|f-S_{I}^{\bm X}f\|_{L_2}^2
        &\le
        \frac{9B}{A} \sup_{ k\notin I}\lambda_k
        +
        \frac{7r}{|I|}\! \sum_{k\in \Imz\setminus I} \lambda_k \\
        &\quad+\frac{12}{A} \sup_{\|f\|_{H(K)} \le 1} \|f-P_{\Imz} f\|_{\ell_\infty(D)}^2 \,,
    \end{align*}
    where $\bm W = \diag(\omega_i/\varrho_i)_{i\in J}$ and $S_{I}^{\bm X} = S_{I}^{\bm X}(\bm W)$.
\end{theorem}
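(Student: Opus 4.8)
The plan is to decompose the reconstruction error into a deterministic truncation part plus a stability-controlled residual, and then to split that residual according to the three summands of the density \eqref{eq:density}: the Christoffel part controls the lower frame bound (hence stability), the eigenfunction-tail part controls the ``middle band'' $\Imz\setminus I$ via matrix concentration, and the base-measure part $\omega_i/3$ controls the ``far tail'' outside $\Imz$ deterministically. Two random events will be combined by a union bound. First I would fix $f$ with $\|f\|_{H(K)}\le 1$, set $c_k=\langle f,e_k\rangle_{H(K)}$, and use $L_2$-orthogonality of the eigenfunctions together with the exact reconstruction of $S_{I}^{\bm X}$ on $\spn\{\eta_k\}_{k\in I}$ to write
\begin{align*}
    \|f-S_{I}^{\bm X}f\|_{L_2}^2
    = \|f-P_{I}f\|_{L_2}^2 + \|S_{I}^{\bm X}(f-P_{I}f)\|_{L_2}^2 \,.
\end{align*}
The first summand is deterministic: as in Step~1 of Theorem~\ref{thm:quaderror} (with $I$ in place of $\Imz$), $\|f-P_{I}f\|_{L_2}^2=\sum_{k\notin I}\lambda_k|c_k|^2\le \sup_{k\notin I}\lambda_k$.

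For the second summand the key is a lower frame bound. The first term of \eqref{eq:density} equals $\tfrac13$ of the Christoffel density \eqref{eq:prob_density_christoffel_weighted_least_squares} and the other two terms are nonnegative, so $\varrho_i$ dominates it with $C=1/3$; hence the choice $n=\lceil\frac{36B}{A}|I|(\log|I|+t)\rceil$ is exactly \eqref{equ:rand_subsampling_n} with $C=1/3$, and Theorem~\ref{thm:randomsubsample_finite} gives, with probability at least $1-2\exp(-t)$, the lower inequality $\frac A2\|g\|_{L_2}^2\le\sum_{i\in J}\frac{\omega_i}{n\varrho_i}|g(\bm x^i)|^2$ on $\spn\{\eta_k\}_{k\in I}$. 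Since the least squares operator is invariant under a global rescaling of the weights, I may work with the weights $\frac{\omega_i}{n\varrho_i}$; Lemmata~\ref{prop2} and \ref{lemma:character} then yield
\begin{align*}
    \|S_{I}^{\bm X}(f-P_{I}f)\|_{L_2}^2
    \le \frac{2}{A}\sum_{i\in J}\frac{\omega_i}{n\varrho_i}\,|(f-P_{I}f)(\bm x^i)|^2 \,.
\end{align*}

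Next I would split $f-P_{I}f=(P_{\Imz}f-P_{I}f)+(f-P_{\Imz}f)$ and apply $|a+b|^2\le 2|a|^2+2|b|^2$, giving a prefactor $\frac4A$ on each piece. For the middle band $P_{\Imz}f-P_{I}f=\sum_{k\in\Imz\setminus I}c_k e_k$, I recognize $\sum_{i\in J}\frac{\omega_i}{n\varrho_i}|(P_{\Imz}f-P_{I}f)(\bm x^i)|^2=\frac1n\|\bm W^{1/2}\bm\Phi^{\Imz\setminus I}_{\bm X}\bm c\|_2^2\le\frac1n\|\bm W^{1/2}\bm\Phi^{\Imz\setminus I}_{\bm X}\|_{2\to2}^2$, using $\|\bm c\|_2\le\|f\|_{H(K)}\le1$, and invoke Lemma~\ref{lemma:infinite} (second random event, probability at least $1-2^{3/4}\exp(-t)$). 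Using $n\le|I|^r$, so $\log n+t\le r(\log|I|+t)$, together with $\frac{\log|I|+t}{n}\le\frac{A}{36B|I|}$ from the definition of $n$, the two Lemma~\ref{lemma:infinite} terms convert into $\frac4A\cdot\frac{63rA}{36|I|}\sum_{k\in\Imz\setminus I}\lambda_k=\frac{7r}{|I|}\sum_{k\in\Imz\setminus I}\lambda_k$ and $\frac4A\cdot2B\sup_{k\notin I}\lambda_k=\frac{8B}{A}\sup_{k\notin I}\lambda_k$. For the far tail $f-P_{\Imz}f$, the third summand gives $\varrho_i\ge\omega_i/3$, hence $\frac{\omega_i}{n\varrho_i}\le\frac3n$ on every drawn point, and deterministically $\frac3n\sum_{i\in J}|(f-P_{\Imz}f)(\bm x^i)|^2\le 3\sup_{\|f\|_{H(K)}\le1}\|f-P_{\Imz}f\|_{\ell_\infty(D)}^2$, so this piece contributes $\frac{12}{A}\sup_{\|f\|_{H(K)}\le1}\|f-P_{\Imz}f\|_{\ell_\infty(D)}^2$. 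Adding the truncation term $\sup_{k\notin I}\lambda_k$ to the $\frac{8B}{A}\sup_{k\notin I}\lambda_k$ above and using $B/A\ge1$ bounds both by $\frac{9B}{A}\sup_{k\notin I}\lambda_k$; the union bound over the two random events gives the success probability, since $2\exp(-t)+2^{3/4}\exp(-t)\le 4\exp(-t)$.

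The main obstacle is not any single estimate but the simultaneous control of the three regimes by one density, and the constant bookkeeping that makes the pieces fit the stated bound exactly: the constant $C=1/3$ forces the $36B/A$ in the required number of points, and the relation $n\le|I|^r$ is what turns the matrix-concentration logarithm $\log n$ in Lemma~\ref{lemma:infinite} into the clean factor $\frac{7r}{|I|}\sum_{k\in\Imz\setminus I}\lambda_k$. The conceptual heart is the convex-combination density \eqref{eq:density}, which lets the lower frame bound, the Bessel-type operator norm, and the $\ell_\infty$ far-tail be handled at once from a single random draw.
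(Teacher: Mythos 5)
Your proposal is correct and follows essentially the same route as the paper's proof: the same two random events (the lower frame bound from Theorem~\ref{thm:randomsubsample_finite} with $C=1/3$ and the operator-norm bound of Lemma~\ref{lemma:infinite}), the same union bound, and the same constant bookkeeping via $n\le|I|^r$. The only cosmetic difference is that you split $f-P_If$ into middle band and far tail pointwise inside the weighted discrete sum, whereas the paper applies the triangle inequality in $L_2$ first and then invokes the stability bound twice; both yield the identical factor of $2$ on each piece and the same final constants.
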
 

\begin{proof} Without loss of generality let $\varrho_i > 0$.
    We begin by defining two events.
    The first one bounds the spectral norm of the least squares matrix
    \begin{align*}
        S_1 = \Big\{ \bm X\subset \Xmz : \Big\|\Big(\bm L\herm \bm W \bm L\Big)\inv \bm L\herm \bm W^{1/2} \Big\|_{2\to 2}^2 \le \frac{2}{A n} \Big\}.
    \end{align*}
    Using Lemma~\ref{prop2}, we have, that this is equivalent to a lower bound of the singular values of $\bm W^{1/2}\bm L$.
    Via Lemma~\ref{lemma:character} we use the $L_2$-MZ characterization and the assumption on $n$ in order to apply Theorem~\ref{thm:randomsubsample_finite} with $C=1/3$ and obtain that the above holds with probability $1-2\exp(-t)$.
  
    For $\bm Y\in\{\Xmz,\bm X\}$ we define the matrix
    \begin{align*}
        \bm \Phi^{\Imz\setminus I}_{\bm Y} 
        = \Big(e_k(\bm x^i)\Big)_{\bm x^i \in \bm Y,  k \in \Imz\setminus I} \,.
    \end{align*}
    Event $S_2$ is the inequalities of Lemma~\ref{lemma:infinite} which holds with probability at least $1-2^{3/4}\exp(-t)$.
    Together we obtain the desired probability
    \begin{align*}
        \mathds P (S_1\cap S_2)
        \ge 1 - \mathds P S_1^\complement - \mathds P S_2^\complement
        \ge 1 - 4\exp(-t)\,.
    \end{align*}
    It remains to show the bound based on the events $S_1$ and $S_2$.
    On that account we decompose the recovery error using triangle inequality
    \begin{align}\label{eq:sum}
        \|f-S_{I}^{\bm X}f\|_{L_2}^2
        &\le \|f-P_{I}f\|_{L_2}^2 + \|P_{ I}f-S_{I}^{\bm X}f\|_{L_2}^2 \nonumber\\
        &\le \|f-P_{I}f\|_{L_2}^2 + 2 \|P_{ I}f-S_{I}^{\bm X}P_{ \Imz}f\|_{L_2}^2 + 2 \|S_{I}^{\bm X}P_{\Imz}f-S_{I}^{\bm X}f\|_{L_2}^2\,.
    \end{align}
    In the following we estimate each of the three summands individually.
  
    \textbf{Step 1.}
    We estimate the first summand of \eqref{eq:sum} in the same manner as in the first step of the proof of Theorem~\ref{thm:quaderror}:
    \begin{align*}
        \|f-P_I f\|_{L_2}^2
        &\le \|f\|_{H(K)}^2\sup_{ k\notin \mathcal I} \lambda_k\,.
    \end{align*}
  
    \textbf{Step 2.} Using the invariance of $S_I^{\bm X}$ on $P_I f$, we estimate the second summand of \eqref{eq:sum} by
    \begin{align*}
        \Big\|P_If-S_I^{\bm X} P_{\Imz}f\Big\|_{L_2}^2
        &= \Big\|S_{I}^{\bm X}(P_{\Imz}f-P_{I}f)\Big\|_{L_2}^2 \\
        &= \Big\| (\bm L^\ast \bm W \bm L)\inv \bm L^\ast\bm W ((P_{\Imz}f-P_I f)(\bm x^i))_{i\in J}\Big\|_2^2 \\
        &\le \Big\| (\bm L^\ast \bm W \bm L)\inv \bm L^\ast \bm W^{1/2}\Big\|_{2\to 2} \sum_{i\in J} \frac{\omega_i}{\varrho_i} |(P_{\Imz}f-P_I f)(\bm x^i)|^2\,.
    \end{align*}
    We estimate the first factor by the means of event $S_1$ and write the latter in terms of its coefficients in order to apply event $S_2$
    \begin{align*}
        &\Big\|S_{I}^{\bm X}P_{ \Imz}f-S_{I}^{\bm X}f\Big\|_{L_2}^2
        \le \frac{2}{A n} \Big\|\bm W^{1/2} \bm{\Phi}_{\Imz\setminus I}^{\bm X} \Big(\langle f, e_k\rangle_{H(K)}\Big)_{ k\in \Imz\setminus I}\Big\|_2^2 \\
        &\quad\le \frac{2}{A n} \Big\|\bm W^{1/2} \bm\Phi_{\Imz\setminus I}^{\bm X}\Big\|_{2\to 2}^2 \sum_{ k\in \Imz\setminus I} |\langle f, e_k\rangle_{H(K)}|^2 \\
        &\quad\le
        \Big( \frac{126 B (\log(n)+t)}{A n} \sum_{k\in \Imz\setminus I} \lambda_k + \frac{4B}{A} \sup_{k\notin I} \lambda_k \Big)
        \|f\|_{H(K)}^2\,.
    \end{align*}
  
    \textbf{Step 3.} We start estimating the third summand of \eqref{eq:sum} analogously to Step~2
    \begin{align*}
        \|P_{I}f- S_{I}^{\bm X}P_{ \Imz}f\|_{L_2}^2
        &\le \frac{2}{An} \sum_{i\in J}\frac{\omega_i}{\varrho_i} |(f-P_{\Imz}f)(\bm x^i)|^2 \,.
    \end{align*}
    Now we use the third part of $\varrho_i$, to obtain
    \begin{align*}
        \|P_{I}f- S_{I}^{\bm X}P_{ \Imz}f\|_{L_2}^2
        &\le \frac{6}{An} \sum_{i\in J} |(f-P_{\Imz}f)(\bm x^i)|^{2} \\
        &\le \frac{6}{A} \|f-P_{\Imz}f\|_{\ell_\infty(D)}^{2} \,.
    \end{align*}
  
    \textbf{Overall}, applying the estimates 1.\,-- 3.\,in \eqref{eq:sum} we obtain
    \begin{align*}
        &\sup_{\|f\|_{H(K)}\le 1} \|f-S_{I}^{\bm X}f\|_{L_2}^2
        \le
        \sup_{ k\notin I}\lambda_k
        +
        \frac{252B(\log(n)+t)}{A n} \sum_{k\in \Imz\setminus I} \lambda_k + \frac{8B}{A} \sup_{k\notin I} \lambda_k \\
        &\quad + \frac{12}{A} \sup_{\|f\|_{H(K)}\le 1} \|f-P_{\Imz}f\|_{\ell_\infty(D)}^2 \,.
    \end{align*}
    By the assumption on $|I|$ we have
    \begin{align*}
        \frac{252 B (\log(n)+t)}{A n}
        \le \frac{7 (\log(n)+t)}{|I|(\log(|I|)+t)}
        \le \frac{7r (\log(|I|)+t)}{|I|(\log(|I|)+t)}
        =\frac{7r}{|I|}
        \le 3 \,,
    \end{align*}
    and obtain the assertion.
\end{proof} 

\begin{remark}\label{rem:aftercentral} \begin{enumerate}[(i)]
    \item
        Given a bounded orthonormal system, i.e., $\|\eta_k\|_{\ell_\infty(D)} = C < \infty$ for all $k$ we can alter events $S_2$ and $S_3$ such that we won't need to sample with respect to the density weights $\varrho_i$ and sample with respect to the weights $\omega_i$ coming from the $L_2$-MZ inequality directly.
    \item
        Whenever the Christoffel-function $\sum_{k\in I} |\eta_k(\bm x)|^2$ is constant, independent of the underlying index set $I$, the three summands in density weights $\varrho_i$ in \eqref{eq:density} coincide and the number of random points may be divided by three whilst achieving the same error bound.
    \item
        Up to the quotient of the constants for the $L_2$-MZ inequality, the first two summands of Theorem~\ref{thm:general} are the same as in \cite{KUV19} where points were drawn with respect to a continuous probability measure.
        The difference is in the latter summand, which only depends on the initial point set.
        By choosing a suitable initial point set, i.e., a point set satisfying a $L_2$-MZ inequality for large enough $\Imz$, we can make this as small as needed.
        In particular smaller than the first two terms, which, therefore, determine the error decay behaviour.
\end{enumerate}
\end{remark} 

Next, we use the unweighted frame subsampling from \cite{BSU22} to prove the main result which lowers the number of points to be linear in $|I|$, cf.\ right of Figure~\ref{fig:scheme}.

\begin{theorem}\label{thm:generalbss} Let Assumption~\ref{ass} hold and $I \subset  \Imz \subset \mathbb N$, $|I| \ge 3$ be an index set.
    For $n\in\mathds N$, $t>0$, and $r\ge 1$ such that
    \begin{align*}
        n
        \coloneqq \left\lceil \frac{36 B}{A} |I| (\log |I|+t) \right\rceil
        \le |I|^r,
    \end{align*}
    let $\bm X = \{\bm x^i\}_{i\in J}$, $|J| = n$ be points drawn i.i.d.\,from $\Xmz$ with respect to the discrete density weights $\varrho_i$, defined in \eqref{eq:density}.
    For $b > 1+\frac{1}{|I|}$, \texttt{PlainBSS} subsampling (cf.\ \cite[Algorithm~3]{BSU22}) the points $\bm X$, we obtain points $\bm X' = \{\bm x^i\}_{i\in J'} \subset \bm X$ with $|\bm X'| \le \lceil b|I|\rceil$ such that we have with probability exceeding $1 - 4\exp(-t)$
    \begin{align*}
        &\sup_{\|f\|_{H(K)}\le 1} \|f-S_{I}^{\bm X'}f\|_{L_2}^2 \\
        &\quad\le
        39\,808\,\frac{B}{A} \frac{(b+1)^2}{(b-1)^3} (\log|I|+t)
        \Bigg(
        \frac{B}{A} \sup_{ k\notin I}\lambda_k
        +
        \frac{r}{|I|}\sum_{k\in\Imz\setminus I} \lambda_k
        \\
        &\quad + \frac{1}{A} \sup_{\|f\|_{H(K)}\le 1} \|f-P_{\Imz}f\|_{\ell_\infty(D)}^2
        \Bigg)
    \end{align*}
    with weights $\bm W' = \diag(\omega_i/\varrho_i)_{i\in J'}$ and the least squares approximation $S_{I}^{\bm X'}f = S_{I}^{\bm X'}(\bm W')f$ defined in \eqref{eq:lsqr}.
\end{theorem}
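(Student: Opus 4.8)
The plan is to run the random draw of Theorem~\ref{thm:general} and then feed the resulting points into the deterministic \texttt{PlainBSS} step of Theorem~\ref{thm:bsssubsample_finite}, afterwards replaying the three-term error decomposition of Theorem~\ref{thm:general} with the subsampled point set $\bm X'$ in place of $\bm X$. Concretely, I would first draw $\bm X = \{\bm x^i\}_{i\in J}$ i.i.d.\ with respect to $\varrho_i$ from \eqref{eq:density} and retain the two events from the proof of Theorem~\ref{thm:general}: event $S_1$, which by Theorem~\ref{thm:randomsubsample_finite} (applied with $C = 1/3$) is the two-sided subsampled inequality \eqref{eq:mz_subs} on $\spn\{\eta_k\}_{k\in I}$ and holds with probability $1-2\exp(-t)$; and event $S_2$, the Bessel bound of Lemma~\ref{lemma:infinite} for $\frac{1}{n}\|\bm W^{1/2}\bm\Phi_{\bm X}^{\Imz\setminus I}\|_{2\to 2}^2$ with $\bm W = \diag(\omega_i/\varrho_i)_{i\in J}$, which holds with probability $1-2^{3/4}\exp(-t)$. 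On the intersection $S_1\cap S_2$, of probability at least $1-4\exp(-t)$, everything that follows is deterministic.

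On $S_1$ the hypothesis \eqref{eq:mz_subs} of Theorem~\ref{thm:bsssubsample_finite} is met, so \texttt{PlainBSS} yields $\bm X' = \{\bm x^i\}_{i\in J'}\subset\bm X$ with $|\bm X'|\le\lceil b|I|\rceil$ together with the one-sided inequality $\frac{(b-1)^3}{178(b+1)^2}A\|f\|_{L_2}^2\le\frac{1}{|I|}\sum_{i\in J'}\frac{\omega_i}{\varrho_i}|f(\bm x^i)|^2$ on $\spn\{\eta_k\}_{k\in I}$. Translating this through Lemmata~\ref{prop2} and \ref{lemma:character} gives the substitute for the spectral bound of $S_1$, namely $\|((\bm L')\herm\bm W'\bm L')\inv(\bm L')\herm(\bm W')^{1/2}\|_{2\to 2}^2\le\frac{178(b+1)^2}{(b-1)^3A|I|}$, where $\bm L'$ and $\bm W' = \diag(\omega_i/\varrho_i)_{i\in J'}$ are the system matrix and weights for $\bm X'$. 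This is the single place where the oversampling factor $b$ enters, and where the denominator $n$ of the original bound $\frac{2}{An}$ is replaced by $|I|$, which is what produces the linear oversampling.

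I would then run the decomposition \eqref{eq:sum} verbatim with $\bm X'$ in place of $\bm X$. The first (truncation) summand is unchanged. For the middle-frequency summand I would combine the new spectral bound with the monotonicity $\|(\bm W')^{1/2}\bm\Phi_{\bm X'}^{\Imz\setminus I}\|_{2\to 2}^2\le\|\bm W^{1/2}\bm\Phi_{\bm X}^{\Imz\setminus I}\|_{2\to 2}^2$, valid because deleting rows cannot enlarge the top singular value, so that event $S_2$ still controls this term. For the high-frequency summand I would likewise use the new spectral bound together with $\sum_{i\in J'}\frac{\omega_i}{\varrho_i}|g(\bm x^i)|^2\le\sum_{i\in J}\frac{\omega_i}{\varrho_i}|g(\bm x^i)|^2\le 3n\|g\|_{\ell_\infty(D)}^2$, where the last step uses $\varrho_i\ge\omega_i/3$ from the third part of \eqref{eq:density}. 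Inserting $n\le\frac{36B}{A}|I|(\log|I|+t)+1$ and $\log n + t\le r(\log|I|+t)$ (from $n\le|I|^r$ and $r\ge 1$), I would collect the three contributions, absorb the ceiling into the numerical prefactor, and verify that each resulting coefficient stays below $39\,808$.

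The one genuinely new difficulty compared with Theorem~\ref{thm:general} is that \texttt{PlainBSS} preserves only the left-hand $L_2$-MZ bound and provides no upper frame bound for $\bm X'$. Hence the Bessel-type quantities that Theorem~\ref{thm:general} controlled via an upper bound for $\bm X$ cannot be re-derived for $\bm X'$ on its own. The remedy is exactly the two monotonicity observations above: since $\bm X'\subset\bm X$ with identical weights $\omega_i/\varrho_i$, every sum $\sum_i\frac{\omega_i}{\varrho_i}|g(\bm x^i)|^2$ only decreases when passing from $J$ to $J'$, so the upper estimates of $S_2$ and of the $\varrho_i\ge\omega_i/3$ bound transfer at no cost. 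What remains is careful bookkeeping of constants, where the delicate point is that the factor $\frac{1}{|I|}$ coming from \texttt{PlainBSS} meets the factor $n\approx\frac{36B}{A}|I|(\log|I|+t)$ carried by the Bessel bounds, and their product is what yields the advertised $\frac{B}{A}(\log|I|+t)$ scaling of the final estimate.
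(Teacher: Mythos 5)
Your proposal is correct and follows essentially the same route as the paper: retain the two probabilistic events from the proof of Theorem~\ref{thm:general}, apply \texttt{PlainBSS} via Theorem~\ref{thm:bsssubsample_finite} on the good event, convert the resulting left $L_2$-MZ inequality through Lemmata~\ref{prop2} and \ref{lemma:character} into the spectral bound $\frac{178(b+1)^2}{(b-1)^3A|I|}$, and rerun Steps~2 and~3 of that proof using monotonicity of the weighted sums under passing from $J$ to $J'$. The paper's proof is just a terser version of the same argument (it writes the new spectral bound as $\frac{89(b+1)^2}{(b-1)^3}\frac{n}{|I|}\cdot\frac{2}{An}$ and bounds $n/|I|$ using $|I|\ge 3$ before invoking Steps~2 and~3), and your explicit monotonicity observations make precise what the paper leaves implicit.
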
 

\begin{proof} For $g\in V$ we have
    \begin{align*}
        \|S_I^{\bm X'} g\|_{L_2}^2
        \le \Big\|((\bm L')\herm\bm W'\bm L')\inv(\bm L')\herm(\bm W')^{1/2} \Big\|_{2\to 2}^2 \sum_{i\in J'} \frac{\omega_i}{\varrho_i} |g(\bm x^i)|^2 \,.
    \end{align*}
    By Theorem~\ref{thm:bsssubsample_finite} and Lemma~\ref{prop2} we have
    \begin{align*}
        \|S_I^{\bm X'} g\|_{L_2}^2
        &\le \frac{89(b+1)^2}{(b-1)^3} \frac{n}{|I|} \frac{2}{A n} \sum_{i\in J'} \frac{\omega_i}{\varrho_i} |g(\bm x^i)|^2 \\
        &\le \frac{3234(b+1)^2 B}{(b-1)^3 A} (\log(|I|) + t) \Big( \frac{2}{A n} \sum_{i\in J'} \frac{\omega_i}{\varrho_i} |g(\bm x^i)|^2 \Big) \,,
    \end{align*}
    where $|I|\ge 3$ was used in
    \begin{align*}
        n = \Big\lceil \frac{36B}{A}|I|(\log|I|+t)\Big\rceil
        \le \frac{109B}{3A}|I|(\log|I|+t) \,.
    \end{align*}
    Using this estimate in step 2 and 3 of the proof of Theorem~\ref{thm:general} we obtain the assertion.
\end{proof} 

We payed the logarithmic factor in the bound to work with linearly many points whilst achieving the same error bound as in \cite{NaSchUl20}.
Recent progress has shown that the logarithmic factor can be avoided but this utilizes the Kadison-Singer theorem and is not constructive, cf.\ \cite{DKU22}.
 \label{sec:submzrecovery}

\section{Application: Sampling recovery on \texorpdfstring{$\mathds T^d$}{Td} with rank-1 lattices}\label{sec:t}
In this section we apply our general theory from Section~\ref{sec:submzrecovery} to the $d$-dimensional torus $\mathds T^d\simeq (\mathds R/\mathds Z)^d$.
First we deal with function spaces on $\mathds T^d$.
To begin with, we consider a weight function $w(\mathbf{k}) \colon \mathds Z^d\to(0,\infty)$ for $\mathbf{k} \in \mathds{Z}^d$ and define the kernel 
\begin{align}\label{t:kerneld}
  K_w(\mathbf{x},\mathbf{y}):=\sum_{\mathbf{k}\in \mathds Z^d} \frac{\exp(2\pi\mathrm{i}\langle\mathbf{k},(\mathbf{y}-\mathbf{x})\rangle )}{w(\bm k)^2}
  \quad\text{for}\quad
  \mathbf{x},\mathbf{y}\in {\mathds T}^d\,,
\end{align}
which is a reproducing kernel for $H^w := H(K_w)$.
The kernel defined in \eqref{t:kerneld} is associated to the inner product 
\begin{align}\label{eq:seed}
  \langle f , g\rangle_{H^w} := \sum_{\bm k\in \mathds Z^d} \hat{f}_{\bm k} \, \overline{\hat{g}_{\bm k}} \,w(\bm k)^2
\end{align}
and the corresponding norm.
These definitions make sense if $\sum_{\bm k \in \mathds{Z^d}} w(\bm k)^{-2}<\infty$. 

An interesting case are Sobolev spaces with mixed smoothness.
For $s\in \mathds N$ we define the space $H^{s}_{\textnormal{mix}} = H^{s}_{\textnormal{mix}}(\mathds T^d)$ as the Hilbert space with the inner product
\begin{align}\label{eq:hsmix}
  \langle f , g\rangle_{H^{s}_{\textnormal{mix}}} :=   \sum_{\bm j \in \{0,s\}^d}\langle D^{(\bm j)}f,D^{(\bm j)}g \rangle_{L_2}\,,
  \quad\text{with}\quad
  D^{(\bm j)} = \frac{\partial^{\|j\|_1}}{\partial x_1^{j_1} \cdots \partial x_d^{j_d}} \,.
\end{align}
In the univariate case the kernel equals
\begin{align*}
  K^1_{s}(x,y):=\sum_{k\in \mathds Z} \frac{\exp(2\pi\mathrm{i}k(y-x) )}{w_{s}(k)^2}
  \quad\text{with}\quad
  w_{s}(k) = \Big(1+(2\pi |k|)^{2s}\Big)^{1/2}
\end{align*}
for $k\in \mathds Z$ and $x,y\in \mathds T$.
This directly leads to the $d$-dimensional kernel
\begin{align*}
  K^d_{s}(\bm x,\bm y):= K^1_{s}(x_1,y_1)\otimes \cdots \otimes   K^1_{s}(x_d,y_d)
  \quad\text{for}\quad \bm x,\bm y \in \mathds T^d\,.
\end{align*}
This construction of the kernel extends to $s\in\mathds R$, $s>1/2$ naturally.
These function spaces are suitable for high-dimensional approximation.
By collecting the most important frequencies with respect to the singular values we end up with a special structure for the frequency index set, namely a hyperbolic cross.
Detailed information on approximation in these spaces can be found in \cite{Tem93, DTU18}.

Next, we deal with $L_2$-MZ inequalities on the torus.
In particular we are interested in those coming from exact quadrature rules, cf.~Lemma~\ref{lemma:mzquadrature}.
The canonical example would be equispaced points
\begin{align*}
    \Xmz
    = \Big\{ \frac{1}{\sqrt[d]{M}} \bm i : \bm i \in \{1, \dots, \sqrt[d]{M} \}^d \Big\}
\end{align*}
(assuming $\sqrt[d]{M}$ is integer) which fulfill an $L_2$-MZ inequality with equal weights $\omega_i = 1/M$ and $A=B$ for the frequency index set
$\Imz = \{-\sqrt[d]{M}/2, \dots, \sqrt[d]{M}/2-1\}^d$.
However, these points are unfeasible in high-dimensional approximation if we do not subsample.
Another candidate for quadrature rules suitable for Sobolev spaces with mixed smoothness are rank-1 lattices
\begin{equation*}
  \Xmz
  \coloneqq \Big\{ \frac 1M (i\bm z \!\!\!\!\mod M\mathds 1) \in\mathds T^d : i = 0,\dots,M-1\Big\}
\end{equation*}
which were studied in \cite{Sloan94, kaemmererdiss, KaPoVo13, PlPoStTa18, DKP22}.
We say a rank-1 lattices $\Xmz = \{\bm x^1, \dots, \bm x^M\}$ has a reconstructing property for a frequency index set $\Imz$ when
\begin{equation}\label{eq:reconstructingproperty}
    \frac{1}{M} \sum_{i=1}^{M} \exp(2\pi\mathrm i\langle \bm k-\bm l, \bm x^i \rangle)
    = \delta_{\bm l, \bm k} \,,
\end{equation}
for all $\bm k, \bm l\in\Imz$, which is equivalent to an $L_2$-MZ inequality \eqref{eq:mz} for $A=B$ with equal weights $\omega_i$.
There exist algorithms which, given a frequency index set $\Imz$ and $M$, compute a generating vector $\bm z$ such that the rank-1 lattice $\Xmz$ has this reconstructing property, cf. \cite{kaemmererdiss,KuoMiNoNu19}.
In particular, the probabilistic approach recently presented in \cite{Kaemmerer20} seems to be particularly suitable for downstream subsampling. On the one hand the component-by-component (CBC) rank-1 lattice construction is extremely efficient with computational costs that are linear in $|\Imz|$ up to a few logarithmic factors. On the other hand, the sizes $M$ of the resulting rank-1 lattices might be slightly (but only up to a factor of two with very high probability) larger than those resulting from the deterministic approaches. In summary, the CBC construction costs are not a major concern, and random subsampling the rank-1 lattice reduces the number of samples actually used to a number that does not depend on the slightly too large $M$.

Next, we consider asymptotic approximation results for rank-1 lattices in Sobolev spaces with dominating mixed smoothness from \cite{ByKaUlVo16}.
For simplicity we omit constants in the following.
For two sequences $(a_n)_{n=1}^{\infty}$ and $(b_n)_{n=1}^\infty \subset \mathds R$ we write $a_n \lesssim b_n$ if there exists a constant $c>0$ such that $a_n \le c b_n$ for all $n$.
We will write $a_n \asymp b_n$ if $a_n\lesssim b_n$ and $b_n \lesssim a_n$.

\begin{theorem}[\cite{KSU15, BDSU16, ByKaUlVo16} Approximation with rank-1 lattices in $H_{\textnormal{mix}}^s$]\label{thm:r1} Let $\Imz \subset \mathbb N$ be a hyperbolic cross frequency index set, i.e.,
    \begin{align}\label{eq:hc}
        \Imz = \Big\{\bm k\in\mathds Z^d : \prod_{j=1}^{d} \max\Big\{1, \frac{|k_j|}{\gamma}\Big\} \le R \Big\}
    \end{align}
    for some $\gamma > 0$, $R\in(1,\infty)$, and $\Xmz$ a corresponding reconstructing rank-1 lattice with $M$ points, cf.\ \eqref{eq:reconstructingproperty}.
    \begin{enumerate}[(i)]
    \item
        The size of a hyperbolic cross is asymptotically $|\Imz| \asymp R (\log R)^{d-1}$.
    \item
        The best possible error of functions in $H_{\textnormal{mix}}^s$ behaves as follows
        \begin{align*}
            \sup_{\|f\|_{H_{\textnormal{mix}}^s}\le 1} \|f-P_{\Imz}f\|_{L_2}^2
            \asymp |\Imz|^{-2s} (\log|\Imz|)^{2(d-1)s}
        \end{align*}
        and
        \begin{align*}
            \sup_{\|f\|_{H_{\textnormal{mix}}^s}\le 1} \|f-P_{\Imz}f\|_{\ell_{\infty}(\mathds T^d)}^2
            \asymp |\Imz|^{-2s+1/2} (\log|\Imz|)^{2(d-1)s} \,.
        \end{align*}
    \item
        The size of a reconstructing rank-1 lattice for $\Imz$ is bounded by
        \begin{align*}
            |\Imz|^2(\log|\Imz|)^{-2(d-1)}
            \lesssim M
            \lesssim |\Imz|^2 (\log |\Imz|)^{-d} \,,
        \end{align*}
        where the lower inequality holds for all reconstructing rank-1 lattices and there exists a rank-1 lattice fulfilling the upper one.
    \item
        The error of the least squares operator using rank-1 lattices is bounded as follows:
        \begin{align*}
            M^{-s}
            \lesssim \sup_{\|f\|_{H_{\textnormal{mix}}^s}\le 1} \|f-S_{\Imz}^{\Xmz} f\|_{L_2}^2
            \lesssim \begin{cases}
            |\Imz|^{-2s}(\log|\Imz|)^{(d-1)(2s+1)} \\
            M^{-s}(\log M)^{(d-2)s+d-1} \,,
            \end{cases}
        \end{align*}
        where the lower inequality holds for all reconstructing rank-1 lattices and there exists a rank-1 lattice fulfilling the upper one.
    \end{enumerate}
\end{theorem}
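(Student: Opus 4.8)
The plan is to reduce every assertion to the explicit spectral picture on $\mathds T^d$: the $L_2$-orthonormal eigenfunctions of $W$ are the Fourier characters $\eta_{\bm k}(\bm x)=\exp(2\pi\mathrm i\langle\bm k,\bm x\rangle)$ and the eigenvalues are $\lambda_{\bm k}=w_s(\bm k)^{-2}\asymp g(\bm k)^{-2s}$ with gauge $g(\bm k)=\prod_{j=1}^d\max\{1,|k_j|/\gamma\}$, so that the hyperbolic cross \eqref{eq:hc} collects exactly the $\bm k$ with largest $\lambda_{\bm k}$. Everything then hinges on the counting function $N(t)=\#\{\bm k\in\mathds Z^d:g(\bm k)\le t\}$. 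For (i) I would estimate $N(R)$ by induction on $d$: in dimension one the count is $\asymp R$, and passing from $d-1$ to $d$ amounts to summing the $(d-1)$-dimensional count of radius $R/\max\{1,|k_d|/\gamma\}$ over $k_d$, a convolution that produces exactly one extra factor $\log R$ per dimension and yields $N(R)\asymp R(\log R)^{d-1}$; equivalently one compares with $\int_{\prod t_j\le R,\,t_j\ge 1}\prod\mathrm dt_j\asymp R(\log R)^{d-1}$, monotonicity controlling the boundary error. Inverting gives $R\asymp|\Imz|(\log|\Imz|)^{-(d-1)}$ and $\log R\asymp\log|\Imz|$, which I use repeatedly.

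For (ii) the $L_2$ bound is immediate: since $P_{\Imz}$ is the $L_2$-orthogonal projection, the worst case equals $\sup_{\bm k\notin\Imz}\lambda_{\bm k}$; the smallest value of $g$ just outside the cross is $\asymp R$, so this is $\asymp R^{-2s}$, and substituting (i) produces the claimed rate. For the $\ell_\infty$ bound I invoke Remark~\ref{remark:spectral}: on the torus $|\eta_{\bm k}|\equiv 1$, hence $|e_{\bm k}(\bm x)|^2=\lambda_{\bm k}$ is constant in $\bm x$ and the worst case is exactly the tail $\sum_{\bm k\notin\Imz}\lambda_{\bm k}$. I estimate this tail by Abel summation against $N$ from (i), i.e.\ $\sum_{g(\bm k)>R}g(\bm k)^{-2s}\asymp\int_R^\infty t^{-2s}(\log t)^{d-1}\,\mathrm dt$, which for $s>1/2$ is governed by its lower endpoint $t\asymp R$ and gives $\sum_{\bm k\notin\Imz}\lambda_{\bm k}\asymp R^{1-2s}(\log R)^{d-1}$; substituting (i) yields the stated $\ell_\infty$ decay (this tail of eigenvalues is precisely the quantity reused in Corollary~\ref{cor:r1}).

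Parts (iii) and (iv) are where the genuine rank-1 lattice structure enters and where I would lean on the cited analysis. For the lower bound in (iii), the reconstruction property \eqref{eq:reconstructingproperty} forces $\langle\bm k-\bm l,\bm z\rangle\not\equiv 0\ (\mathrm{mod}\ M)$ for all distinct $\bm k,\bm l\in\Imz$, so $M$ is at least the cardinality of the difference set $\Imz-\Imz$; estimating $|\Imz-\Imz|\asymp R^2\asymp|\Imz|^2(\log|\Imz|)^{-2(d-1)}$ for the hyperbolic cross (cf.\ \cite{KaPoVo13,kaemmererdiss}) gives the lower inequality. The matching existence statement is the substantive part: a component-by-component / averaging argument produces a generating vector $\bm z$ and an $M\lesssim|\Imz|^2(\log|\Imz|)^{-d}$ realizing \eqref{eq:reconstructingproperty}, for which I would cite \cite{kaemmererdiss,KSU15,Kaemmerer20} rather than reprove it.

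For (iv) the crude route — feeding (ii) together with $A=B=1$ and $\sum_i\omega_i=1$ into Theorem~\ref{thm:quaderror} — only delivers the tail-sum order $|\Imz|^{1-2s}(\log|\Imz|)^{2s(d-1)}$, which is polynomially weaker than the first claimed case $|\Imz|^{-2s}(\log|\Imz|)^{(d-1)(2s+1)}$. The latter, and the dual bound $M^{-s}(\log M)^{(d-2)s+d-1}$ expressed through the lattice size, require the explicit aliasing (Fourier-analytic) analysis of the lattice least-squares operator, where the extra logarithmic penalty over the best $L_2$-rate from (ii) quantifies the effect of aliasing on the full lattice. The lower bound $M^{-s}$ is the corresponding aliasing obstruction that exhibits the loss of half the order of convergence. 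These aliasing estimates, together with the existence assertion in (iii), are the main obstacle; for them I would invoke \cite{KSU15,BDSU16,ByKaUlVo16} directly, the remaining parts (i), (ii), and the lower bound in (iii) being the self-contained counting and tail arguments sketched above.
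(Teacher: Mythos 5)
Your overall architecture matches the paper's: the paper proves this theorem almost entirely by citation ((i) is \cite[Lemma~2]{ByKaUlVo16}; (ii) follows from $\sup_{\bm k\notin\Imz}\lambda_{\bm k}\asymp R^{-2s}$ together with (i) and \cite[Theorem~6.11(iii)]{BDSU16}; (iii) is Lemmata~2 and 3 of \cite{ByKaUlVo16}; (iv) is \cite[Theorem~2]{ByKaUlVo16} with $\alpha=s$, $\beta=\gamma=0$), plus one remark you omit but which matters: the operator analyzed in \cite{ByKaUlVo16} computes Fourier coefficients by applying $\frac1M\bm L^\ast$, and one must observe that the reconstructing property \eqref{eq:reconstructingproperty} makes this coincide with the least squares operator $S_{\Imz}^{\Xmz}$ of \eqref{eq:lsqr}. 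Your self-contained treatments of (i) and the first half of (ii) are fine and standard. For the second half of (ii), your computation via Remark~\ref{remark:spectral} gives $\sum_{\bm k\notin\Imz}\lambda_{\bm k}\asymp|\Imz|^{-2s+1}(\log|\Imz|)^{2(d-1)s}$; this does not reproduce the exponent $-2s+1/2$ in the statement, but it does agree with the bound the paper itself uses in the proof of Corollary~\ref{cor:r1} and in the introduction, so you have in effect exposed an inconsistency in the stated exponent rather than made an error. Your diagnosis in (iv) that Theorem~\ref{thm:quaderror} cannot yield the claimed rates and that the aliasing analysis must be imported from \cite{ByKaUlVo16} is also correct and is exactly what the paper does.

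The genuine gap is your lower bound in (iii). The reconstructing property only forces $\langle\bm k-\bm l,\bm z\rangle\not\equiv 0\pmod M$ for distinct $\bm k,\bm l\in\Imz$, i.e., every nonzero difference maps to a \emph{nonzero} residue; it does not force distinct differences to map to \emph{distinct} residues, so the inference $M\ge|\Imz-\Imz|$ is false. A one-dimensional counterexample: $I=\{0,\dots,N-1\}$ with $M=N$ and $z=1$ is reconstructing, yet $|I-I|=2N-1>M$. The correct argument (the content of \cite[Lemma~3]{ByKaUlVo16}) goes through the integer dual lattice $\Lambda^\perp=\{\bm h\in\mathds Z^d:\langle\bm h,\bm z\rangle\equiv 0\ (\mathrm{mod}\ M)\}$, which has index at most $M$ in $\mathds Z^d$: intersecting it with a two-dimensional coordinate plane and applying Minkowski's theorem produces a nonzero $\bm h\in\Lambda^\perp$ supported on two coordinates with $\|\bm h\|_\infty\lesssim\sqrt M$, and since the difference set of the hyperbolic cross contains the full two-dimensional block of sidelength $\asymp R$ in that plane, any such $\bm h$ must satisfy $\|\bm h\|_\infty\gtrsim R$, whence $M\gtrsim R^2\asymp|\Imz|^2(\log|\Imz|)^{-2(d-1)}$. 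Since you also cite \cite{kaemmererdiss, KaPoVo13} for this step the conclusion is salvageable, but the specific argument you wrote down does not prove it.
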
 

\begin{proof} There are different definitions for the hyperbolic cross and the Sobolev spaces with mixed smoothness.
    Up to constants the considered quantities coincide, cf.\ \cite{KSU15}.
    The assertion (i) is given in \cite[Lemma~2]{ByKaUlVo16}.
    To show (ii), we use $\sup_{\|f\|_{H_{\textnormal{mix}}^s}\le 1} \|f-P_{\Imz}f\|_{L_2}^2 = R^{-s}$ and (i).
    The second part of (ii) is given in \cite[Theorem~6.11~(iii)]{BDSU16}.
    
    Assertion (iii) follows from Lemmata~2 and 3 in \cite{ByKaUlVo16}.
    To show (iv), we use \cite[Theorem~2]{ByKaUlVo16} with $\alpha = s$, $\beta = \gamma = 0$ and Lemmata~2 and 3 from \cite{ByKaUlVo16} again.
    
    Note, that the Fourier coefficients of the approximation in \cite{ByKaUlVo16} (formula~2.3) are computed by applying the adjoint of the Fourier matrix.
    Because of the reconstructing property, this coincides with the least squares approximation presented here, i.e., with $\bm W = \diag(1/M, \dots, 1/M)$ we have
    \begin{equation*}
        S_{\Imz}^{\Xmz} f
        = \sum_{\bm k\in\Imz}a_{\bm k}\exp(2\pi\mathrm i \langle\bm k,\bm x\rangle)
        \quad\text{with}\quad
        \bm a
        = (\bm L^\ast \bm W\bm L)\inv \bm L^\ast\bm W\bm f
        = \frac{1}{M} \bm L^\ast\bm f \,.
        \qedhere
    \end{equation*}
\end{proof} 

Similar bounds were obtained for the kernel method operating on rank-1 lattice points, cf.\ \cite{KKKS21}.
For random points one has the rate $M^{-2s}$ and additional logarithmic terms, however.
So, using full rank-1 lattices we lose half the rate of convergence in the main order.
But we recover the order of convergence by subsampling as stated in Corollary~\ref{cor:r1}.

\begin{proof}[Proof of Corollary~\ref{cor:r1}] Since we are dealing with a reconstructing lattice, we have a tight frame $A=B$.
We need to apply Theorem~\ref{thm:generalbss} and Remark~\ref{rem:aftercentral} and use the following two inequalities
\begin{align*}
   \sum_{\bm k\notin I} \lambda_{\bm k}
   = \sum_{\bm k\notin I} \prod_{j=1}^{d} w_s(k_j)^{-2}
   \lesssim |I|^{-2s+1}(\log|I|)^{2(d-1)s}
\end{align*}
from \cite[(2.3.2)]{DTU18}.
Using Hölder's inequality and the above inequality again, we obtain
\begin{align*}
    \|f-P_{\Imz}f\|_{\ell_\infty}^2
    &\le \Big| \sum_{k\notin \Imz} \lambda_{\bm k}\inv \lambda_{\bm k} |\hat f_{\bm k}| \Big|^2 \\
    &\le \sum_{k\notin \Imz} \lambda_{\bm k}^{-2} \sum_{k\notin\Imz} \lambda_{\bm k}^2 |\hat f_{\bm k}|^2 \\
    &\lesssim |\Imz|^{-2s+1}(\log|\Imz|)^{2(d-1)s} \|f\|_{H^s_{\textnormal{mix}}}^2 \,.\qedhere
\end{align*}
\end{proof} 

\begin{remark}[Optimality for rank-1 lattices] With the initial hyperbolic cross $\Imz$ in Corollary~\ref{cor:r1} slightly bigger than $I$ we recover the optimal error bound $n^{-s}(\log n)^{(d-1)s}$, cf.\ \cite[(2.3.2)]{DTU18}, up to a logarithmic factor.
    Thus, the phrase ``This main rate (without logarithmic factors) is half the optimal main rate [...] and turns out to be the best possible among all algorithms taking samples on lattices'' from \cite{ByKaUlVo16} has to be stated more precisely to ``\dots samples on \emph{full} lattices''.
    We conjecture that the non-constructive approach in \cite{DKU22} (based on Kadison-Singer and Weaver subsampling \cite{NaSchUl20}) may lead to a bound without additional logarithm.
\end{remark}

\subsection{Numerical experiments}\label{subsec:t_numerics} 

Because of the one-dimensional structure of \mbox{rank-1} lattices, the matrix-vector product with the system matrix $\bm L_{\Xmz}$ can be carried out using a one-dimensional fast Fourier transform in $\mathcal O(M\log M)$.
We may use this algorithm for the subsampled points $\bm X = \{\bm x^i\}_{i\in J}$ and system matrix $\bm L$ too:
\vspace{-0.4cm}
\begin{align}\label{eq:mask}
  \bm L
  = \bm P \bm L_{\Xmz}
  \quad\text{where}\quad
  \bm P 
  = \left(\begin{matrix}\\\\\\\\\end{matrix}\right. \begin{matrix} \\ 1 & 0 & 0 & \dots & 0 & 0 & 0 \\ 0 & 0 & 1 & \dots & 0 & 0 & 0 \\ & \vdots &&&& \vdots & \\ 0 & 0 & 0 & \dots & 0 & 1 & 0 \\ \textcolor{gray}{j_1} && \textcolor{gray}{j_2} &&& \textcolor{gray}{j_n} \end{matrix} \left.\begin{matrix}\\\\\\\\\end{matrix}\right)
    \textcolor{gray}{\begin{matrix}1\\2\\\vdots\\n\end{matrix}}\,.
\end{align}
\vspace{-0.1cm}

Here, we use hyperbolic crosses for the frequency index sets with size $M\lesssim |\Imz|^2(\log|\Imz|)^{-d}$, cf.\ Theorem~\ref{thm:r1}~(iii).
Using the Fast Fourier Transform this yields the following complexity for the matrix-vector product with the full rank-1 lattice and, subsequently, the subsampled one:
\begin{equation*}
    \mathcal O(M\log M)
    = \mathcal O( |\Imz|^2(\log|\Imz|)^{1-d})
\end{equation*}
with the same memory usage.
In contrast, if we naively set up the matrix with the $n \sim |\Imz|\log|\Imz|$ random points we have a complexity of
$ \mathcal O(|\Imz|^2\log|\Imz|) $
for the number of arithmetic operations and memory usage of the matrix-vector product.
If we further use \texttt{BSS} subsampling this reduces to $\mathcal O(|\Imz|^2)$, which is still slower than using the Fast Fourier Transform.
In general, whenever we start with a rank-1 lattice with fewer than quadratic points, we gain computation speed.

\paragraph{Experiment 1.} For the numerical experiments, we follow \cite{ByKaUlVo16, KUV19} and use the five-dimensional kink function
\begin{align*}
  f(\bm x)
  = f((x_1, \dots, x_d)\transp)
  = \prod_{j=1}^5 \Big( \frac{5^{3/4} 15}{4\sqrt 3} \max\Big\{\frac 15 - \Big(x_j-\frac 12\Big)^2, 0\Big\}\Big)
\end{align*}
of which we know the exact Fourier coefficients.
We remark that $f\in H^{3/2-\varepsilon}_{\text{mix}}(\mathds T^5)$ for $\varepsilon > 0$ and $\|f\|_{L_2} = 1$.

\begin{figure}
  \centering
  \begingroup
  \makeatletter
  \providecommand\color[2][]{\GenericError{(gnuplot) \space\space\space\@spaces}{Package color not loaded in conjunction with
      terminal option `colourtext'}{See the gnuplot documentation for explanation.}{Either use 'blacktext' in gnuplot or load the package
      color.sty in LaTeX.}\renewcommand\color[2][]{}}\providecommand\includegraphics[2][]{\GenericError{(gnuplot) \space\space\space\@spaces}{Package graphicx or graphics not loaded}{See the gnuplot documentation for explanation.}{The gnuplot epslatex terminal needs graphicx.sty or graphics.sty.}\renewcommand\includegraphics[2][]{}}\providecommand\rotatebox[2]{#2}\@ifundefined{ifGPcolor}{\newif\ifGPcolor
    \GPcolortrue
  }{}\@ifundefined{ifGPblacktext}{\newif\ifGPblacktext
    \GPblacktexttrue
  }{}\let\gplgaddtomacro\g@addto@macro
\gdef\gplbacktext{}\gdef\gplfronttext{}\makeatother
  \ifGPblacktext
\def\colorrgb#1{}\def\colorgray#1{}\else
\ifGPcolor
      \def\colorrgb#1{\color[rgb]{#1}}\def\colorgray#1{\color[gray]{#1}}\expandafter\def\csname LTw\endcsname{\color{white}}\expandafter\def\csname LTb\endcsname{\color{black}}\expandafter\def\csname LTa\endcsname{\color{black}}\expandafter\def\csname LT0\endcsname{\color[rgb]{1,0,0}}\expandafter\def\csname LT1\endcsname{\color[rgb]{0,1,0}}\expandafter\def\csname LT2\endcsname{\color[rgb]{0,0,1}}\expandafter\def\csname LT3\endcsname{\color[rgb]{1,0,1}}\expandafter\def\csname LT4\endcsname{\color[rgb]{0,1,1}}\expandafter\def\csname LT5\endcsname{\color[rgb]{1,1,0}}\expandafter\def\csname LT6\endcsname{\color[rgb]{0,0,0}}\expandafter\def\csname LT7\endcsname{\color[rgb]{1,0.3,0}}\expandafter\def\csname LT8\endcsname{\color[rgb]{0.5,0.5,0.5}}\else
\def\colorrgb#1{\color{black}}\def\colorgray#1{\color[gray]{#1}}\expandafter\def\csname LTw\endcsname{\color{white}}\expandafter\def\csname LTb\endcsname{\color{black}}\expandafter\def\csname LTa\endcsname{\color{black}}\expandafter\def\csname LT0\endcsname{\color{black}}\expandafter\def\csname LT1\endcsname{\color{black}}\expandafter\def\csname LT2\endcsname{\color{black}}\expandafter\def\csname LT3\endcsname{\color{black}}\expandafter\def\csname LT4\endcsname{\color{black}}\expandafter\def\csname LT5\endcsname{\color{black}}\expandafter\def\csname LT6\endcsname{\color{black}}\expandafter\def\csname LT7\endcsname{\color{black}}\expandafter\def\csname LT8\endcsname{\color{black}}\fi
  \fi
    \setlength{\unitlength}{0.0500bp}\ifx\gptboxheight\undefined \newlength{\gptboxheight}\newlength{\gptboxwidth}\newsavebox{\gptboxtext}\fi \setlength{\fboxrule}{0.5pt}\setlength{\fboxsep}{1pt}\definecolor{tbcol}{rgb}{1,1,1}\begin{picture}(7360.00,5660.00)\gplgaddtomacro\gplbacktext{\csname LTb\endcsname \put(750,3434){\makebox(0,0)[r]{\strut{}\scriptsize $10^{-13}$}}\csname LTb\endcsname \put(750,3775){\makebox(0,0)[r]{\strut{}\scriptsize $10^{-10}$}}\csname LTb\endcsname \put(750,4116){\makebox(0,0)[r]{\strut{}\scriptsize $10^{-7}$}}\csname LTb\endcsname \put(750,4457){\makebox(0,0)[r]{\strut{}\scriptsize $10^{-4}$}}\csname LTb\endcsname \put(750,4798){\makebox(0,0)[r]{\strut{}\scriptsize $10^{-1}$}}\csname LTb\endcsname \put(1312,3291){\makebox(0,0){\strut{}\scriptsize $10^{2}$}}\csname LTb\endcsname \put(2368,3291){\makebox(0,0){\strut{}\scriptsize $10^{4}$}}\csname LTb\endcsname \put(3424,3291){\makebox(0,0){\strut{}\scriptsize $10^{6}$}}}\gplgaddtomacro\gplfronttext{\csname LTb\endcsname \put(2114,3127){\makebox(0,0){\strut{}\scriptsize \# of frequencies}}\csname LTb\endcsname \put(2114,5332){\makebox(0,0){\strut{}truncation and aliasing $L_2$-error}}}\gplgaddtomacro\gplbacktext{\csname LTb\endcsname \put(4420,3434){\makebox(0,0)[r]{\strut{}\scriptsize $10^{0}$}}\csname LTb\endcsname \put(4420,3788){\makebox(0,0)[r]{\strut{}\scriptsize $10^{2}$}}\csname LTb\endcsname \put(4420,4141){\makebox(0,0)[r]{\strut{}\scriptsize $10^{4}$}}\csname LTb\endcsname \put(4420,4495){\makebox(0,0)[r]{\strut{}\scriptsize $10^{6}$}}\csname LTb\endcsname \put(4420,4848){\makebox(0,0)[r]{\strut{}\scriptsize $10^{8}$}}\csname LTb\endcsname \put(4982,3291){\makebox(0,0){\strut{}\scriptsize $10^{2}$}}\csname LTb\endcsname \put(6038,3291){\makebox(0,0){\strut{}\scriptsize $10^{4}$}}\csname LTb\endcsname \put(7094,3291){\makebox(0,0){\strut{}\scriptsize $10^{6}$}}}\gplgaddtomacro\gplfronttext{\csname LTb\endcsname \put(5784,3127){\makebox(0,0){\strut{}\scriptsize \# of frequencies}}\csname LTb\endcsname \put(5784,5332){\makebox(0,0){\strut{}\# of points}}}\gplgaddtomacro\gplbacktext{\csname LTb\endcsname \put(750,614){\makebox(0,0)[r]{\strut{}\scriptsize $10^{-8}$}}\csname LTb\endcsname \put(750,968){\makebox(0,0)[r]{\strut{}\scriptsize $10^{-6}$}}\csname LTb\endcsname \put(750,1321){\makebox(0,0)[r]{\strut{}\scriptsize $10^{-4}$}}\csname LTb\endcsname \put(750,1675){\makebox(0,0)[r]{\strut{}\scriptsize $10^{-2}$}}\csname LTb\endcsname \put(750,2028){\makebox(0,0)[r]{\strut{}\scriptsize $10^{0}$}}\csname LTb\endcsname \put(1115,471){\makebox(0,0){\strut{}\scriptsize $10^{2}$}}\csname LTb\endcsname \put(1777,471){\makebox(0,0){\strut{}\scriptsize $10^{4}$}}\csname LTb\endcsname \put(2439,471){\makebox(0,0){\strut{}\scriptsize $10^{6}$}}\csname LTb\endcsname \put(3101,471){\makebox(0,0){\strut{}\scriptsize $10^{8}$}}}\gplgaddtomacro\gplfronttext{\csname LTb\endcsname \put(2114,307){\makebox(0,0){\strut{}\scriptsize \# of points}}\csname LTb\endcsname \put(2114,2512){\makebox(0,0){\strut{}$L_2$-error}}}\gplgaddtomacro\gplbacktext{\csname LTb\endcsname \put(4420,614){\makebox(0,0)[r]{\strut{}\scriptsize $10^{-4}$}}\csname LTb\endcsname \put(4420,968){\makebox(0,0)[r]{\strut{}\scriptsize $10^{-2}$}}\csname LTb\endcsname \put(4420,1321){\makebox(0,0)[r]{\strut{}\scriptsize $10^{0}$}}\csname LTb\endcsname \put(4420,1675){\makebox(0,0)[r]{\strut{}\scriptsize $10^{2}$}}\csname LTb\endcsname \put(4420,2028){\makebox(0,0)[r]{\strut{}\scriptsize $10^{4}$}}\csname LTb\endcsname \put(4982,471){\makebox(0,0){\strut{}\scriptsize $10^{2}$}}\csname LTb\endcsname \put(6038,471){\makebox(0,0){\strut{}\scriptsize $10^{4}$}}\csname LTb\endcsname \put(7094,471){\makebox(0,0){\strut{}\scriptsize $10^{6}$}}}\gplgaddtomacro\gplfronttext{\csname LTb\endcsname \put(5784,307){\makebox(0,0){\strut{}\scriptsize \# of frequencies}}\csname LTb\endcsname \put(5784,2512){\makebox(0,0){\strut{}time in seconds}}}\gplbacktext
    \put(0,0){\includegraphics[width={368.00bp},height={283.00bp}]{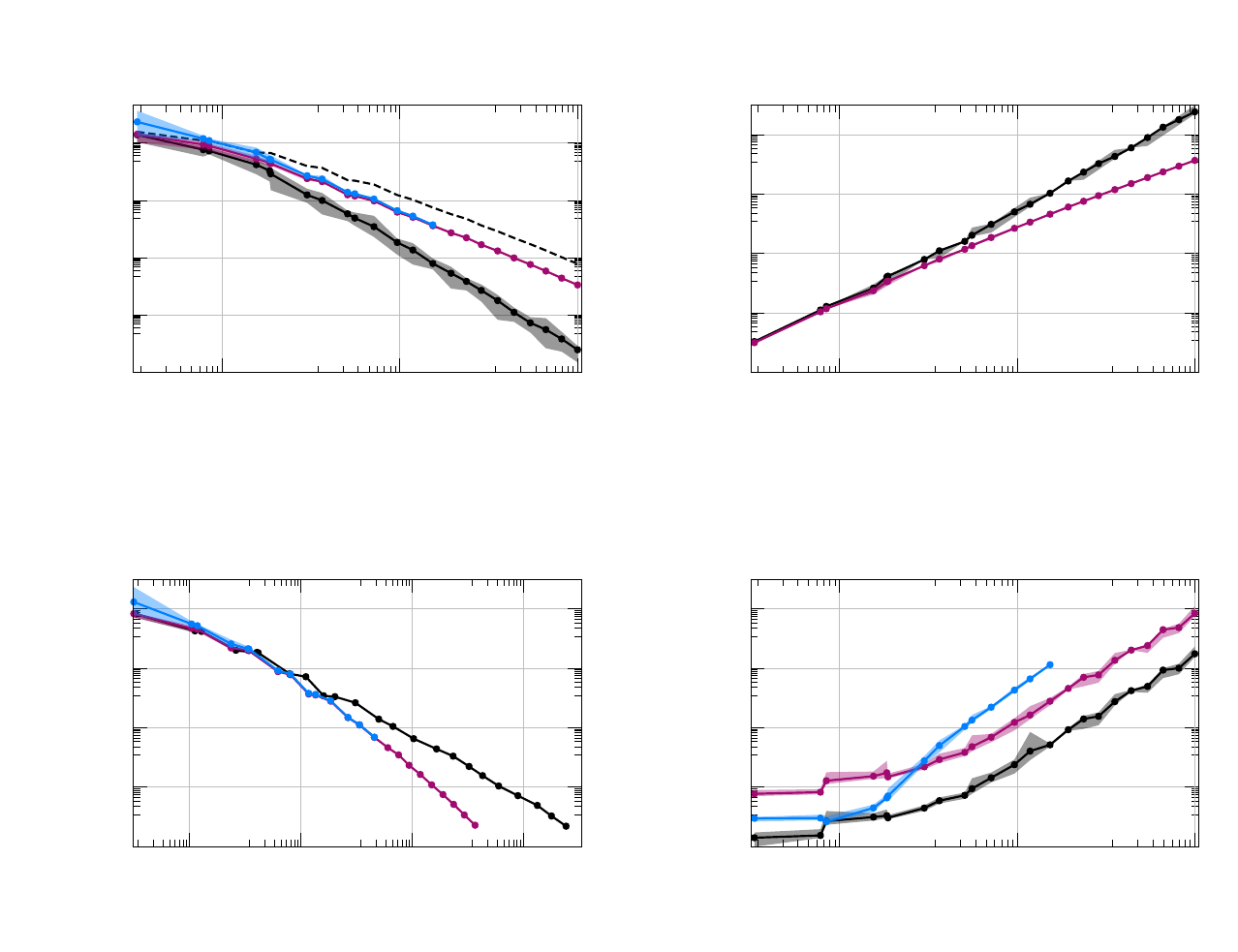}}\gplfronttext
  \end{picture}\endgroup
   \caption{Five-dimensional experiment~1 on the torus for different point sets and algorithms. Black: full lattice, \textcolor{signalviolet}{magenta:} randomly subsampled lattice, \textcolor{azure}{azure:} continuously random points, and dashed the truncation error.}
  \label{fig:experiment1_t}
\end{figure}

For frequency sets we use hyperbolic crosses with different radii $R$, defined in \eqref{eq:hc}, where we choose the shape parameter $\gamma = 1/2$. Using the probabilistic algorithm from \cite{Kaemmerer20} we computed a reconstructing rank-1 lattice $\Xmz$ for $I$.
We used three different techniques for approximation:
\begin{itemize}
\item 
    We then solved the least squares system \eqref{eq:lsqr} for the full rank-1 lattice.
    Using Lemma~\ref{lemma:mzquadrature} for $\bm L^\ast\bm W\bm L = \bm I$, we omit the inverse matrix.
    The special structure of the points was used to perform the matrix-vector product using a one-dimensional fast Fourier transform in $\mathcal O(M\log M)$.
\item
    We randomly subsampled the full rank-1 lattices according to Theorem~\ref{thm:general} (all points have equal probability $\varrho_i = 1/M$) such that we have $n = \lceil |I|\log|I|\rceil$ points , cf.~Figure~\ref{fig:scheme}.
    We solved the least squares system \eqref{eq:lsqr} iteratively using the same one-dimensional fast Fourier transform as described in \eqref{eq:mask} and set an iteration limit of $10$.
\item
    For comparison, we drew random samples with respect to the Lebesgue measure as suggested in \cite{KUV19} with the same number of points $n = \lceil |I|\log |I| \rceil$.
    We set up the system matrix and solved the least squares system \eqref{eq:lsqr} with the $n$ random points iteratively.
    We limit the iterations to at most $10$ as before.
\end{itemize}
We computed the $L_2$-error decomposed into truncation error
\begin{align*}
    \|f-P_I f\|_{L_2}^2 = \|f\|_{L_2}^2-\sum_{k\in I}|\hat f_k|^2
\end{align*}
and the aliasing error
\begin{align*}
    \|P_I f - S_{I}^{X} f\|_{L_2}^{2} = \sum_{k\in I}|\hat f_k-\hat g_k|^2
\end{align*}
with $\hat g_k$ the Fourier coefficients of the approximation.
Further we measured the elapsed time for the computations and stopped the computations when more then $100\,\text{gigabytes}$ of memory were used.
We repeated this experiment ten times and the minimal, maximal and average  results can be seen in Figure~\ref{fig:experiment1_t}.

\begin{itemize}
\item
    In the upper left figure, we see that for all proposed methods the aliasing error is below the truncation error (dashed line). We emphasize at this point that we have chosen $I=\Imz$, which is of course reasonable since the bottleneck is the truncation error, which cannot be prevented.
    Differences in the sets $I$ and $\Imz$ might only have a positive effect on the aliasing error, which seems to be superfluous in our setting.
    
    Note, not to loose the main rate in Theorem~\ref{thm:r1} for $s=3/2$, we would need to choose $|\Imz|\gtrsim |I|^{3/2}$ or $M\gtrsim n^3$.
    That we still observe the optimal main rate in practice leads us to suspect that the theory might be improved by suitably replacing the $\ell_\infty$-term.
\item
    In the upper right figure, we see that the full rank-$1$ lattice has more points and is worth subsampling.
    In particular for $10^6$ frequencies we have an oversampling factor of $489$ for the full lattice whereas the oversampling for the random choices is $\log(10^6)\le 14$.
\item
    In the lower left we see the slower error decay of the full rank-1 lattice and the faster error decay of the continuously random points with respect to the number of points.
    The error of the subsampled rank-1 lattice is the minimum of these two.
    In particular, for increasing number of points we obtain the better error decay for random points as well as for the randomly subsampled rank-1 lattice.
\item
    In the lower right we see the computation time of the respective methods.
    The full rank-1 lattice and the subsampled rank-1 lattice differ by a factor of ten.
    This is due to the same underlying algorithm and the iteration count of ten for the subsampled version.
    Longer than both took the continuous random points as there the full matrix has to be used in contrast to using the highly tuned Fast Fourier Transform.
    As we showed at the beginning of this section this is slower in regards to the complexity ($\mathcal O(|I|^2(\log|I|)^{1-d})$ versus $\mathcal O(|I|\log|I|)$).
    For $23\,483$ frequencies the computations took $258$ seconds for the full matrix, whereas merely $4$ seconds for the subsampled rank-1 lattice.
    Furthermore storing the matrix needs $23\,483\cdot 212\,432\cdot 16\,\text{bytes} \approx 80\,\text{gigabytes}$, where $16\,\text{bytes}$ is the size of a complex floating point number with double precision (which is why the experiment for the continuously random points stopped early).
    In contrast to that the rank-1 lattice uses around $1\,000\,000$ points and therefore $1\,000\,000\cdot 16\,\text{bytes} = 16\,\text{megabytes}$ of memory.
    Even the largest considered rank-1 lattice uses merely $8\,\text{gigabytes}$.
\end{itemize}

\paragraph{Experiment~2.} We repeat Experiment~1 but additionally \texttt{BSS} subsample the randomly subsampled rank-1 lattice further to an oversampling factor of $b=2$
(the \texttt{BSS} algorithm is available at \texttt{https://github.com/felixbartel/BSSsubsampling.jl}).
We need to stop earlier as the \texttt{BSS} algorithm is (so far) not suitable for arbitrarily large matrices.
The results can be seen in Figure~\ref{fig:experiment2_t}.
\begin{figure}
  \centering
  \begingroup
  \makeatletter
  \providecommand\color[2][]{\GenericError{(gnuplot) \space\space\space\@spaces}{Package color not loaded in conjunction with
      terminal option `colourtext'}{See the gnuplot documentation for explanation.}{Either use 'blacktext' in gnuplot or load the package
      color.sty in LaTeX.}\renewcommand\color[2][]{}}\providecommand\includegraphics[2][]{\GenericError{(gnuplot) \space\space\space\@spaces}{Package graphicx or graphics not loaded}{See the gnuplot documentation for explanation.}{The gnuplot epslatex terminal needs graphicx.sty or graphics.sty.}\renewcommand\includegraphics[2][]{}}\providecommand\rotatebox[2]{#2}\@ifundefined{ifGPcolor}{\newif\ifGPcolor
    \GPcolortrue
  }{}\@ifundefined{ifGPblacktext}{\newif\ifGPblacktext
    \GPblacktexttrue
  }{}\let\gplgaddtomacro\g@addto@macro
\gdef\gplbacktext{}\gdef\gplfronttext{}\makeatother
  \ifGPblacktext
\def\colorrgb#1{}\def\colorgray#1{}\else
\ifGPcolor
      \def\colorrgb#1{\color[rgb]{#1}}\def\colorgray#1{\color[gray]{#1}}\expandafter\def\csname LTw\endcsname{\color{white}}\expandafter\def\csname LTb\endcsname{\color{black}}\expandafter\def\csname LTa\endcsname{\color{black}}\expandafter\def\csname LT0\endcsname{\color[rgb]{1,0,0}}\expandafter\def\csname LT1\endcsname{\color[rgb]{0,1,0}}\expandafter\def\csname LT2\endcsname{\color[rgb]{0,0,1}}\expandafter\def\csname LT3\endcsname{\color[rgb]{1,0,1}}\expandafter\def\csname LT4\endcsname{\color[rgb]{0,1,1}}\expandafter\def\csname LT5\endcsname{\color[rgb]{1,1,0}}\expandafter\def\csname LT6\endcsname{\color[rgb]{0,0,0}}\expandafter\def\csname LT7\endcsname{\color[rgb]{1,0.3,0}}\expandafter\def\csname LT8\endcsname{\color[rgb]{0.5,0.5,0.5}}\else
\def\colorrgb#1{\color{black}}\def\colorgray#1{\color[gray]{#1}}\expandafter\def\csname LTw\endcsname{\color{white}}\expandafter\def\csname LTb\endcsname{\color{black}}\expandafter\def\csname LTa\endcsname{\color{black}}\expandafter\def\csname LT0\endcsname{\color{black}}\expandafter\def\csname LT1\endcsname{\color{black}}\expandafter\def\csname LT2\endcsname{\color{black}}\expandafter\def\csname LT3\endcsname{\color{black}}\expandafter\def\csname LT4\endcsname{\color{black}}\expandafter\def\csname LT5\endcsname{\color{black}}\expandafter\def\csname LT6\endcsname{\color{black}}\expandafter\def\csname LT7\endcsname{\color{black}}\expandafter\def\csname LT8\endcsname{\color{black}}\fi
  \fi
    \setlength{\unitlength}{0.0500bp}\ifx\gptboxheight\undefined \newlength{\gptboxheight}\newlength{\gptboxwidth}\newsavebox{\gptboxtext}\fi \setlength{\fboxrule}{0.5pt}\setlength{\fboxsep}{1pt}\definecolor{tbcol}{rgb}{1,1,1}\begin{picture}(7360.00,5660.00)\gplgaddtomacro\gplbacktext{\csname LTb\endcsname \put(750,3434){\makebox(0,0)[r]{\strut{}\scriptsize $10^{-6}$}}\csname LTb\endcsname \put(750,3699){\makebox(0,0)[r]{\strut{}\scriptsize $10^{-5}$}}\csname LTb\endcsname \put(750,3964){\makebox(0,0)[r]{\strut{}\scriptsize $10^{-4}$}}\csname LTb\endcsname \put(750,4230){\makebox(0,0)[r]{\strut{}\scriptsize $10^{-3}$}}\csname LTb\endcsname \put(750,4495){\makebox(0,0)[r]{\strut{}\scriptsize $10^{-2}$}}\csname LTb\endcsname \put(750,4760){\makebox(0,0)[r]{\strut{}\scriptsize $10^{-1}$}}\csname LTb\endcsname \put(750,5025){\makebox(0,0)[r]{\strut{}\scriptsize $10^{0}$}}\csname LTb\endcsname \put(784,3291){\makebox(0,0){\strut{}\scriptsize $10^{1}$}}\csname LTb\endcsname \put(1671,3291){\makebox(0,0){\strut{}\scriptsize $10^{2}$}}\csname LTb\endcsname \put(2558,3291){\makebox(0,0){\strut{}\scriptsize $10^{3}$}}\csname LTb\endcsname \put(3445,3291){\makebox(0,0){\strut{}\scriptsize $10^{4}$}}}\gplgaddtomacro\gplfronttext{\csname LTb\endcsname \put(2114,3127){\makebox(0,0){\strut{}\scriptsize \# of frequencies}}\csname LTb\endcsname \put(2114,5332){\makebox(0,0){\strut{}truncation and aliasing $L_2$-error}}}\gplgaddtomacro\gplbacktext{\csname LTb\endcsname \put(4420,3434){\makebox(0,0)[r]{\strut{}\scriptsize $10^{0}$}}\csname LTb\endcsname \put(4420,3752){\makebox(0,0)[r]{\strut{}\scriptsize $10^{1}$}}\csname LTb\endcsname \put(4420,4070){\makebox(0,0)[r]{\strut{}\scriptsize $10^{2}$}}\csname LTb\endcsname \put(4420,4389){\makebox(0,0)[r]{\strut{}\scriptsize $10^{3}$}}\csname LTb\endcsname \put(4420,4707){\makebox(0,0)[r]{\strut{}\scriptsize $10^{4}$}}\csname LTb\endcsname \put(4420,5025){\makebox(0,0)[r]{\strut{}\scriptsize $10^{5}$}}\csname LTb\endcsname \put(4454,3291){\makebox(0,0){\strut{}\scriptsize $10^{1}$}}\csname LTb\endcsname \put(5341,3291){\makebox(0,0){\strut{}\scriptsize $10^{2}$}}\csname LTb\endcsname \put(6228,3291){\makebox(0,0){\strut{}\scriptsize $10^{3}$}}\csname LTb\endcsname \put(7115,3291){\makebox(0,0){\strut{}\scriptsize $10^{4}$}}}\gplgaddtomacro\gplfronttext{\csname LTb\endcsname \put(5784,3127){\makebox(0,0){\strut{}\scriptsize \# of frequencies}}\csname LTb\endcsname \put(5784,5332){\makebox(0,0){\strut{}\# of points}}}\gplgaddtomacro\gplbacktext{\csname LTb\endcsname \put(750,614){\makebox(0,0)[r]{\strut{}\scriptsize $10^{-3}$}}\csname LTb\endcsname \put(750,1144){\makebox(0,0)[r]{\strut{}\scriptsize $10^{-2}$}}\csname LTb\endcsname \put(750,1675){\makebox(0,0)[r]{\strut{}\scriptsize $10^{-1}$}}\csname LTb\endcsname \put(750,2205){\makebox(0,0)[r]{\strut{}\scriptsize $10^{0}$}}\csname LTb\endcsname \put(784,471){\makebox(0,0){\strut{}\scriptsize $10^{1}$}}\csname LTb\endcsname \put(1449,471){\makebox(0,0){\strut{}\scriptsize $10^{2}$}}\csname LTb\endcsname \put(2115,471){\makebox(0,0){\strut{}\scriptsize $10^{3}$}}\csname LTb\endcsname \put(2780,471){\makebox(0,0){\strut{}\scriptsize $10^{4}$}}\csname LTb\endcsname \put(3445,471){\makebox(0,0){\strut{}\scriptsize $10^{5}$}}}\gplgaddtomacro\gplfronttext{\csname LTb\endcsname \put(2114,307){\makebox(0,0){\strut{}\scriptsize \# of points}}\csname LTb\endcsname \put(2114,2512){\makebox(0,0){\strut{}$L_2$-error}}}\gplgaddtomacro\gplbacktext{\csname LTb\endcsname \put(4420,614){\makebox(0,0)[r]{\strut{}\scriptsize $10^{-4}$}}\csname LTb\endcsname \put(4420,932){\makebox(0,0)[r]{\strut{}\scriptsize $10^{-2}$}}\csname LTb\endcsname \put(4420,1250){\makebox(0,0)[r]{\strut{}\scriptsize $10^{0}$}}\csname LTb\endcsname \put(4420,1569){\makebox(0,0)[r]{\strut{}\scriptsize $10^{2}$}}\csname LTb\endcsname \put(4420,1887){\makebox(0,0)[r]{\strut{}\scriptsize $10^{4}$}}\csname LTb\endcsname \put(4420,2205){\makebox(0,0)[r]{\strut{}\scriptsize $10^{6}$}}\csname LTb\endcsname \put(4454,471){\makebox(0,0){\strut{}\scriptsize $10^{1}$}}\csname LTb\endcsname \put(5341,471){\makebox(0,0){\strut{}\scriptsize $10^{2}$}}\csname LTb\endcsname \put(6228,471){\makebox(0,0){\strut{}\scriptsize $10^{3}$}}\csname LTb\endcsname \put(7115,471){\makebox(0,0){\strut{}\scriptsize $10^{4}$}}}\gplgaddtomacro\gplfronttext{\csname LTb\endcsname \put(5784,307){\makebox(0,0){\strut{}\scriptsize \# of frequencies}}\csname LTb\endcsname \put(5784,2512){\makebox(0,0){\strut{}time in seconds}}}\gplbacktext
    \put(0,0){\includegraphics[width={368.00bp},height={283.00bp}]{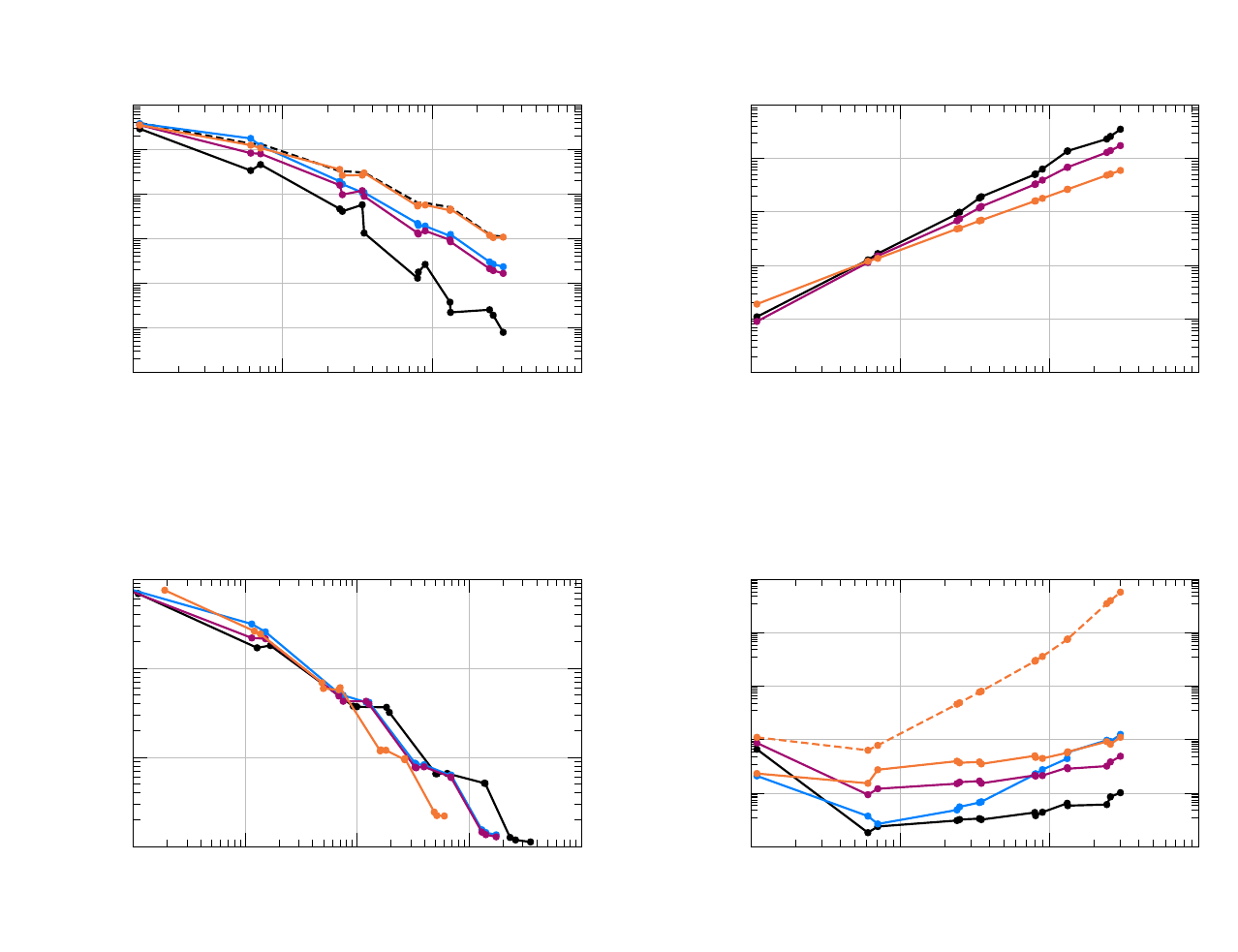}}\gplfronttext
  \end{picture}\endgroup
   \caption{Five-dimensional experiment~2 on the torus for different point sets and algorithms. Black: full lattice, \textcolor{signalviolet}{magenta:} randomly subsampled lattice, \textcolor{orange}{orange:} random and \texttt{BSS} subsampled lattice (dashed: time of the \texttt{BSS} precomputation step) \textcolor{azure}{azure:} continuously random points, and dashed the truncation error}
  \label{fig:experiment2_t}
\end{figure}

As the random subsampling step was already evaluated in the experiment above, we focus on the \texttt{BSS} subsampling.
We obtain an even smaller number of points ($|\bm X'| = 2|I|$) while still having the aliasing error slightly smaller than the truncation error.
This results in a faster error decay with respect to the number of points, cf.\ lower left of Figure~\ref{fig:experiment2_t}.
We also see that the \texttt{BSS} algorithm takes way more time, cf. lower right of Figure~\ref{fig:experiment2_t}.
But this is a precomputation step and only has to be done once with the actual iterative solver for the solution not suffering from this. 
\section*{Acknowledgement}
Felix~Bartel would like to acknowledge the support by the BMBF grant 01-S20053A (project SA$\ell$E). Daniel Potts acknowledge funding by Deutsche Forschungsgemeinschaft (DFG, German Research Foundation) -- Project-ID 416228727 -- SFB 1410. Lutz K\"ammerer acknowledges funding by the Deutsche Forschungsgemeinschaft (DFG, German Research Foundation) -- project number 38064826.

\bibliographystyle{amsplain}
\providecommand{\bysame}{\leavevmode\hbox to3em{\hrulefill}\thinspace}
\providecommand{\MR}{\relax\ifhmode\unskip\space\fi MR }
\providecommand{\MRhref}[2]{\href{http://www.ams.org/mathscinet-getitem?mr=#1}{#2}
}
\providecommand{\href}[2]{#2}

\end{document} %